\newcommand{\md}{\mathrm{d}}
\newcommand{\lag}{\langle}
\newcommand{\rag}{\rangle}
\newcommand{\e}{\epsilon}
\newcommand{\Be}{\boldsymbol{E}}
\newcommand{\p}{\partial}
\newcommand{\dd}{{\textup d}}
\newcommand{\de}{\delta}
\newcommand{\ty}{\infty}
\newcommand{\la}{\lambda}
\newcommand{\bp}{\boldsymbol{\psi}}
\newcommand{\bz}{\boldsymbol{z}}
\newcommand{\bo}{\boldsymbol}
\renewcommand{\AA}{\mathcal{A}}
\newcommand{\BB}{\mathcal{B}}
\newcommand{\CC}{\mathcal{C}}
\newcommand{\DD}{\mathcal{D}}
\newcommand{\II}{\mathcal{I}}
\newcommand{\KK}{{\cal K}}
\newcommand{\LL}{\mathcal{L}}
\newcommand{\OO}{\mathcal{O}}
\newcommand{\PP}{\mathcal{P}}
\newcommand{\QQ}{\mathcal{Q}}
\renewcommand{\SS}{\mathcal{S}}
\newcommand{\UU}{\mathcal{U}}
\newcommand{\VV}{\mathcal{V}}
\newcommand{\R}{\mathbb{R}}
\newcommand{\C}{\mathbb{C}}
\newcommand{\N}{\mathbb{N}}
\newcommand{\Z}{\mathbb{Z}}
\newcommand{\Q}{\mathbb{Q}}
\numberwithin{equation}{section}
\DeclareRobustCommand*{\RefNeqsSecStab}{\ref{P:2.2}}
\DeclareRobustCommand*{\RefNeqsSecGenericite}{\ref{L:4.1}}
\theoremstyle{plain}
\newtheorem*{mth}{Main Theorem}
\newtheorem{theorem}{Theorem}[section]
\newtheorem{lemma}[theorem]{Lemma}
\newtheorem{proposition}[theorem]{Proposition}
\theoremstyle{definition}
\theoremstyle{remark}
\newtheorem{remark}[theorem]{Remark}
\title{Simultaneous global exact controllability of an arbitrary number of \textsc{1d} bilinear {S}chr\"odinger equations}
\author{
Morgan \textsc{Morancey}\footnote{CMLS UMR 7640, Ecole Polytechnique, 91128 Palaiseau, France; 
email: Morgan.Morancey@cmla.ens-cachan.fr},
Vahagn \textsc{Nersesyan}\footnote{ Laboratoire de Math\'ematiques, UMR CNRS 8100, Universit\'e de Versailles-Saint-Quentin-Yvelines, F-78035 Versailles, France;  e-mail: Vahagn.Nersesyan@math.uvsq.fr}
%\thanks{The authors were partially supported by the ANR grants  EMAQS and STOSYMAP No. ANR-2011-BS01-017-01 and  ANR-2011-BS01015-01}
}
\date{}
\begin{document}

\maketitle
\begin{abstract}
We consider a system of an arbitrary number of \textsc{1d} linear Schr\"odinger equations on a bounded interval with bilinear control. We prove global exact controllability in large time of these $N$ equations with a single control. This result is valid for an arbitrary potential with generic assumptions on the dipole moment of the considered particle. Thus, even in the case of a single particle, this result extends the available literature. The proof combines local exact controllability around finite sums of eigenstates, proved with Coron's return method, a global approximate controllability property, proved with Lyapunov strategy, and a compactness argument.

\medskip
\noindent
{\bf AMS subject classifications:} 35Q41, 	93C20, 93B05

\smallskip
\noindent
{\bf Keywords:} Schr\"odinger equation, simultaneous control, global exact controllability, return method, Lyapunov function

\end{abstract}

\tableofcontents

%%%%%%%%%%%%%%%%%%%%%%%%%%%%%%%%%%%%%%%%%%%%%%%%%%%%%%%%%%%%%%%%%%%%%%%%%%%%%%%%%%%%%%%%%%%%%%%%%%%%%%%
%%%%%%%%%%%%%%%%%%%%%%%%%%%%%%%%%%%%%%%%%%%%%%%%%%%%%%%%%%%%%%%%%%%%%%%%%%%%%%%%%%%%%%%%%%%%%%%%%%%%%%%
\section{Introduction}
%%%%%%%%%%%%%%%%%%%%%%%%%%%%%%%%%%%%%%%%%%%%%%%%%%%%%%%%%%%%%%%%%%%%%%%%%%%%%%%%%%%%%%%%%%%%%%%%%%%%%%%
%%%%%%%%%%%%%%%%%%%%%%%%%%%%%%%%%%%%%%%%%%%%%%%%%%%%%%%%%%%%%%%%%%%%%%%%%%%%%%%%%%%%%%%%%%%%%%%%%%%%%%%

 The evolution of a \textsc{1d} quantum particle submitted to an external laser field   is described by the following linear Schr\"odinger equation
\begin{equation} \label{syst_1eq}
\left\{
\begin{aligned}
& i \partial_t \psi = \left(-\partial^2_{xx} + V(x)\right) \psi - u(t) \mu(x) \psi, &(t,x)& \in (0,T) \times (0,1),
\\
&\psi(t,0) = \psi(t,1) = 0,
\end{aligned}
\right.
\end{equation}
where $V(x)$ is the potential   of the particle,     $\mu(x)$ is the dipole moment,~$\psi(t,x)$ is the wave function, and~$u(t)$ is  the amplitude   of the  laser. In this setting, we consider $N$ identical and independent particles. Then neglecting entanglement effects, the system will be described by the following equations 
\begin{equation} \label{syst_Neq}
\left\{
\begin{aligned}
& i \partial_t \psi^j = \left(- \partial^2_{xx} + V(x)\right) \psi^j - u(t) \mu(x) \psi^j,  &(t,x)& \in (0,T) \times (0,1),
\\
& \psi^j(t,0) = \psi^j(t,1) = 0, \;  &j&   \in\{1, \dots, N\},
\\
& \psi^j(0,x) = \psi^j_0(x).
\end{aligned}
\right.
\end{equation}
This can be seen as a step towards more sophisticated and realistic models. From the point of view of controllability, this is a bilinear control system where the state is the $N$-tuple of wave functions $(\psi^1, \dots, \psi^N)$ and the control is the real-valued function $u$. The main result of this article is the global exact controllability of (\ref{syst_Neq}) for  an arbitrary number $N$ of particles,  arbitrary potential~$V$, and a generic dipole moment~$\mu$.

\smallskip 
Before stating our main result, let us introduce some notations. We denote by  $\SS$  the unit sphere in $L^2((0,1),\C)$ and $\boldsymbol \SS:= \SS^N$. Since the functions $V, \mu$ and the control $u$ are real-valued, for any initial condition~$\bp_0:=(\psi^1_0 ,\ldots,\psi^N_0)$ in $ \bo \SS$, the solution $\bp(t):=(\psi^1(t),\ldots,\psi^N(t))$ belongs to $\bo \SS$.   
We say that the vectors $\bp_0, \bp_f\in \boldsymbol \SS $ are unitarily equivalent, if there is a unitary operator $\UU$ in $L^2$ such that $\bp_f= \UU \bp_0$, i.e. $\psi_f^j=\UU \psi_0^j$ for all $j=1, \ldots,N$.
Finally, we define the operator $A_V$ by
\begin{equation*}
\DD(A_V) := H^2\cap H^1_0((0,1),\C), \quad A_V \varphi := \left( -\partial^2_{xx} + V(x) \right) \varphi
\end{equation*}
and, for $s>0$, we set~$H^s_{(V)} := \DD\big( A_V^{s/2} \big)$ and write $\boldsymbol H^{s}_{(V)}$ instead of~$(H^s_{(V)})^N$. 
\begin{mth}
For any given $V\in H^4((0,1),\R) $, problem (\ref{syst_Neq}) is globally exactly controllable in  $\boldsymbol H^{4}_{(V)}  $ generically with respect to $\mu$   in $ H^4((0,1),\R)$. More precisely, there is a residual set $\QQ_V$ in  $ H^4((0,1),\R)$ such that for any $\mu\in \QQ_V$ and for any unitarily equivalent vectors $\bp_0,\bp_f\in\boldsymbol \SS\cap \boldsymbol H^{4}_{(V)} $  there is a time $T>0$ and a control $u\in L^2((0,T),\R)$ such that the solution of (\ref{syst_Neq}) satisfies 
$$
\bp(T)= \bp_f.
$$
\end{mth}
First of all, notice that the unitary equivalence assumption on the initial condition and the target is not restrictive. Indeed, the evolution of the considered Schr\"odinger equation (\ref{syst_1eq}) is unitary, hence the system can be controlled from a given initial state only to a unitarily equivalent target.

\smallskip

 The problem of controllability for the bilinear Schr\"odinger equation   has been widely studied in the literature. 
A  negative controllability result for bilinear quantum systems is proved by Turinici \cite{Turinici00} as a corollary of a   general result by Ball, Marsden, and Slemrod \cite{BallMarsdenSlemrod82}. It states that the complement of the reachable set with $L^2$ controls from any initial condition in $\SS \cap H^2_{(0)}$ is dense in $\SS \cap H^2_{(0)}$. Thus, these equations have been considered to be non-controllable.

This negative result is actually only due to the choice of the functional setting. For a single particle, Beauchard proved in \cite{Beauchard05} local exact controllability in large time in $H^7_{(0)}$ in the case $\mu(x)=x$, $V(x)=0$, using Coron's return method, quasi-static deformations, and Nash--Moser theorem. Exhibiting a regularizing effect, this result was extended to the case of the space  $H^3_{(0)}$ for generic dipole moment $\mu$, still in the case $V=0$, by Beauchard and Laurent \cite{BeauchardLaurent}. Thus, as we are dealing with an arbitrary potential $V$ and a generic dipole moment $\mu$,   Main~Theorem with $N=1$ is already an improvement of the previous literature. In \cite{BeauchardCoron06}, Beauchard and Coron proved exact controllability between eigenstates for a particle in a moving potential well as studied by Rouchon in \cite{RouchonModele}.

\smallskip

Different methods have been developed to study approximate controllability.    A first  strategy of the proof of approximate controllability is due to Chambrion, Mason, Sigalotti, and Boscain~\cite{CMSB09}, which relies on the  geometric techniques based on the controllability of the Galerkin approximations.
The   hypotheses of this result were refined by Boscain, Caponigro, Chambrion, and Sigalotti in \cite{BCCS11}. In a more recent paper~\cite{BoscainCaponigroSigalotti13} of this team, in particular, it is proved   a  simultaneous approximate controllability property in Sobolev spaces for  an arbitrary number of equations. For more details and more references about the geometric techniques, we refer the reader  to the recent survey~\cite{BoscainChambrionSigalotti_review}. Although the results presented in these papers cover an important  class of models, the functional setting used there is always incompatible with the one which is necessary for the exact controllability. More precisely, approximate controllability is proved in less regular spaces than the one needed for exact controllability.

%Thus, the approximate controllability results proved in this paper thanks to Lyapunov arguments are also innovating. 
The second method which is used in the literature to prove approximate controllability for the bilinear Schr\"odinger equation is the Lyapunov strategy.  This method was   used by Mirrahimi in \cite{Mirrahimi09}    in the case of a mixed spectrum and by Beauchard and Mirrahimi in \cite{BeauchardMirrahimi09} in the case $V=0$ and $\mu(x)=x$. Both of these results prove approximate stabilization  in   $L^2$. Global approximate controllability   with generic assumptions both on the potential and the dipole moment is obtained by the second author in \cite{Nersesyan} and extended to higher norms leading to the first global exact controllability result for a bilinear quantum system in \cite{Nersesyan10}. For a model involving also a quadratic control, we refer to~\cite{Morancey_polarisabilite}. Approximate controllability in regular spaces (containing $H^3$) can also be deduced from the exact controllability results in infinite time \cite{NersesyanNersisyan1D,NersesyanNersisyan12} by Nersisyan and the second author.  The novelty of   Main Theorem with respect to the above papers is the fact that $N$~particles are controlled simultaneously in a regular space for an arbitrary fixed potential $V$.

Simultaneous exact controllability of quantum particles has been obtained for a finite dimensional model in \cite{TuriniciRabitz04} by Turinici and Rabitz. Their model uses specific orientation of the molecules and their proof relies on iterated Lie brackets.
To our best knowledge, the only exact simultaneous controllability results for infinite dimensional bilinear quantum systems were obtained in \cite{Morancey_simultane} by the first author locally around eigenstates in the case $V=0$ for $N=2$ or $N=3$. This is proved either up to a global phase in arbitrary time or exactly up to a global delay in the case $N=2$ and up to a global phase and a global delay in the case $N=3$. In that paper, it is also proved that, under generic assumptions on the dipole moment, local exact controllability (resp. local controllability up to a global phase) with controls small in $L^2$ does not hold in small time for $N\geq 2$ (resp. $N\geq 3$). A key issue for the positive results of this paper is the construction of a suitable reference trajectory which coincides (up to global phase and/or a global delay) at the final time with the vector of eigenstates. Extending directly this result to the case $N \geq 4$ presents two difficulties: in the trigonometric moment problem we solve for the construction of the reference trajectory resonant frequencies appear (e.g. $\lambda_7 - \lambda_1 = \lambda_8 - \lambda_4$) and the frequency~$0$ appears with multiplicity $N$. The use of a global phase and/or a global delay, by adding new degrees of freedom, allowed to deal with the frequency $0$ having multiplicity two or three. In our setting, we do not impose any conditions on the phase terms of the reference trajectory (see Proposition~\ref{prop_controle_traj_ref}). Thus, the frequency $0$ does not appear in the associated trigonometric moment problems. Taking advantage of the assumptions on the spectrum of the free operator, we  prove local exact controllability around $(\varphi_{1,V}, \dots, \varphi_{N,V})$ (see the First step of the proof of Theorem~\ref{th_controle_exact_local}). The price to pay is that we lose track of the time of control.

%{\color{blue}[I think in this paragraph, we need to compare better the   papers and clearly   underline  the novelties of our paper (we can have referee who is   completely  unaware of this business, then we may have serious problems).  Nothing is written about the time delay and the phase changes in the previous paper.  As far as I can see, the perturbative argument alone does not allow to conclude in the case of any $V$ and any number of equations (we have discussed this point, the perturbation may make you leave the 'exact controllability neighbourhood').  You write that $V=0$ is the reason why you needed $N=2,3$, but $V=0$ was also important since this condition  allowed you to have  an algebraic structure for the spectrum.]}

\paragraph*{Structure of the article.}
The Main Theorem is proved in three steps. First, under favourable hypotheses on $V$ and $\mu$, we prove that any initial condition can be driven arbitrarily close to some finite sum of eigenfunctions. This is done in Section~\ref{sect_controle_approche} using a Lyapunov strategy inspired by \cite{Nersesyan10}. Then, adapting the ideas of \cite{Morancey_simultane}, using favourable assumptions on the spectrum of $A_V$ and a compactness argument, we prove in Section~\ref{sect_control_exact} exact controllability locally around specific finite sums of eigenfunctions. Finally, for any potential $V$, using a perturbation argument, leading to the potential $V+\mu$ instead of $V$, we gather in Section~\ref{sect_control_global} the two previous results to prove the Main Theorem. Let us mention that, essentially with the same proof, one can prove global exact controllability in~$H^{3+\e}$, for any~$\e>0$.

%%%%%%%%%%%%%%%%%%%%%%%%%%%%%%%%%%%%%%%%%%%%%%%%%%%%%%%%%%%%%%%%%%%%%%%%%%%%%%%%%%%%%%%%%%%%%%%%%%%%%%%
\subsection*{Notations}
%%%%%%%%%%%%%%%%%%%%%%%%%%%%%%%%%%%%%%%%%%%%%%%%%%%%%%%%%%%%%%%%%%%%%%%%%%%%%%%%%%%%%%%%%%%%%%%%%%%%%%%

The space $L^2((0,1),\C)$ is endowed with the usual scalar product  
\begin{equation*}
\lag f,g \rag = \int_0^1 f(x) \overline{g(x)} \md x,
\end{equation*}
and we denote by $\| \cdot \|$ the associated norm. For any $s>0$, we denote by $\| \cdot \|_s$ the classical norm on the Sobolev space $H^s((0,1),\C)$.
The eigenvalues and eigenvectors of the operator $A_V$ are  denoted respectively by $\lambda_{k,V}$ and $\varphi_{k,V}$. The eigenstates are defined by 
\begin{equation*}
\Phi_{k,V}(t,x) := \varphi_{k,V}(x) e^{-i \lambda_{k,V} t}, \quad (t,x) \in \R^+ \times (0,1), \,  k \in \N^*.
\end{equation*}
Any $N$-tuple of eigenstates is a solution of system~(\ref{syst_Neq}) with control $u \equiv 0$.   
Notice that
$$
H^3_{(V)} = \left\{ \varphi\in H^3((0,1),\C)\, ; \, \varphi|_{x=0,1}=\varphi''|_{x=0,1}=0 \right\} =H^3_{(0)}
$$
for any $V\in H^3((0,1),\R)$.
We endow this space with the norm
$$
\| \psi \|_{H^3_{(V)}} := \left( \sum_{k=1}^{\infty} | k^3 \lag \psi , \varphi_{k,V} \rag |^2 \right)^{\frac{1}{2}}.
$$
We use bold characters to denote vector functions or product spaces.  For instance,   we denote by  $\bp(t)$ the vector $(\psi^1(t),\ldots,\psi^N(t))$  of  solutions of \eqref{syst_Neq} and  by $\boldsymbol H^s_{(V)}$ the space $(H^s_{(V)} )^N$. With coherent notations, $\bo \varphi_V$ denotes the vector  $(\varphi_{1,V},\dots,\varphi_{N,V})$.

\smallskip
\noindent
Let us denote by $U(H)$ the set of unitary operators from a Hilbert space $H$ into itself, and by $U_N$ the set of $N\times N$ unitary matrices.   Any $N\times M$ matrix~$C=(c_{ij})$ defines a linear map from $H^M$ to $H^N$ (denoted again by $C$) which associates to the vector   $(z^1, \ldots, z^M)$ the vector $( \sum_{j=1}^Mc_{1j}z^j, \ldots, \sum_{j=1}^Mc_{Nj}z^j)$.

\smallskip
\noindent
For a Banach space $X$, let    $B_X(a, d)$ be the closed ball of radius $d > 0$ centred on $a\in  X$. A subset of  $  X$ is said to be residual if   it contains a countable intersection of open and dense sets.

\noindent
The symbol $\delta_{j=k}$ is the classical Kronecker symbol, i.e., $\delta_{j=k} = 1$ if $j=k$ and $\delta_{j=k} = 0$ otherwise.

\smallskip
\noindent
Finally, we  define the space
\begin{equation*}
\ell^2_r (\N,\C) := \left\{ d \in \ell^2(\N,\C) \, ;\, d_0 \in \R \right\}
\end{equation*}which is endowed with the natural metric.

%%%%%%%%%%%%%%%%%%%%%%%%%%%%%%%%%%%%%%%%%%%%%%%%%%%%%%%%%%%%%%%%%%%%%%%%%%%%%%%%%%%%%%%%%%%%%%%%%%%%%%%
%%%%%%%%%%%%%%%%%%%%%%%%%%%%%%%%%%%%%%%%%%%%%%%%%%%%%%%%%%%%%%%%%%%%%%%%%%%%%%%%%%%%%%%%%%%%%%%%%%%%%%%
\section{Well-posedness}
\label{subsect_bien_pose}
%%%%%%%%%%%%%%%%%%%%%%%%%%%%%%%%%%%%%%%%%%%%%%%%%%%%%%%%%%%%%%%%%%%%%%%%%%%%%%%%%%%%%%%%%%%%%%%%%%%%%%%
%%%%%%%%%%%%%%%%%%%%%%%%%%%%%%%%%%%%%%%%%%%%%%%%%%%%%%%%%%%%%%%%%%%%%%%%%%%%%%%%%%%%%%%%%%%%%%%%%%%%%%%

In the following proposition,   we recall a well-posedness result   of the Cauchy problem for the Schr\"odinger equation
\begin{equation} \label{1eq_cauchy}
\left\{
\begin{aligned}
& i \partial_t \psi = \left(- \partial^2_{xx} + V(x)\right) \psi- u(t) \mu(x) \psi - v(t) \mu (x) \zeta,  &(t,x)& \in (0,T) \times (0,1),  \\
& \psi(t,0) = \psi(t,1) = 0,  
\\
& \psi(0,x) = \psi_0(x),
\end{aligned}
\right.
\end{equation}
and list properties of the solution that will be used in the proofs of the main results in the subsequent sections. 

\begin{proposition} \label{P:2.1}
Let us assume that $V, \mu \in H^3((0,1),\R)$ and $T>0$.  Then, for any $\psi_0\in H^3_{(0)}$, $\zeta \in C^0([0,T],H^3_{(0)})$ and  $u,v \in L^2((0,T),\R)$ there is a unique weak solution of (\ref{1eq_cauchy}), i.e.,  a function $\psi \in C( [0, T ],H^3_{(0)} )$ such that the following equality holds in $H^3_{(0)}$  for every $t \in [0, T ]$ 
$$
\psi(t)=e^{-iA_Vt}\psi_0+i\int_0^te^{-iA_V(t-\tau)} \big( u(\tau) \mu \psi(\tau) + v(\tau) \mu \zeta(\tau) \big) \dd \tau.
$$
For every $R>0$, there exists $C=C(T,V,\mu,R)>0$ such that, if $\|u\|_{L^2(0,T)} < R$, this weak solution satisfies
$$
\| \psi \|_{C^0([0,T],H^3_{(V)})} \leq 
C \left( \| \psi_0 \|_{H^3_{(V)}} + \|v\|_{L^2(0,T)} \| \zeta \|_{L^{\infty}((0,T),H^3_{(V)})} \right).
$$
Moreover, if $v \equiv 0$  the solution satisfies 
$$
\|\psi(t)\| =\|\psi_0\|  \quad \text{for all $t\in [0,T]$},
$$ 
and the following properties hold in the case $v \equiv 0$.

\medskip

{\bf Differentiability.} Let us denote by $\psi(t, \psi_0,u)$ the solution of (\ref{1eq_cauchy}) corresponding to  $\psi_0\in H^3_{(0)}$,  $u\in L^2((0,T),\R)$ and $v=0$. The mapping
\begin{equation} \label{def_propagateur}
\begin{array}{cccc}
 \psi(T, \psi_0,\cdot) : &  L^2((0,T),\R)
& \rightarrow & H^3_{(0)}  ,
\\
& u & \mapsto & \psi(T, \psi_0,u)
\end{array}
\end{equation}
 is $C^1$, and for any $u,v\in L^2((0,T),\R)$, we have $\partial_{u} \psi(T, \psi_0,u)v=\Psi(T) $, where $\Psi$ is the weak solution of the linearized system
\begin{equation*}  
\left\{
\begin{aligned}
& i \partial_t \Psi = \left(- \partial^2_{xx} + V(x) \right) \Psi- u(t) \mu(x) \Psi-v(t)\mu(x)\psi,  \,\,\,\,  (t,x)  \in (0,T) \times (0,1),  \\
& \Psi(t,0) = \Psi(t,1) = 0,  
\\
& \Psi(0,x) = 0,
\end{aligned}
\right.
\end{equation*}
with $\psi = \psi( \cdot, \psi_0,u)$.

\smallskip

{\bf Regularity.} Assume that   $V, \mu \in H^4((0,1),\R)$.  For any   $ u\in W^{1,1}((0,T),\R)
 $   and        $ \psi_0\in H^4_{(V- u(0)\mu) } $, we have 
    $\psi (t)\in H^4_{(V- u(t)\mu)}$  for all $t\in [0,T]$.

\smallskip

{\bf Time reversibility.}
Suppose that $\psi(T, \overline{\psi_f},u)=\overline{\psi_0}$ for some $\psi_0,\psi_f \in   H^3_{(0)}$, $u\in
L^2((0,T), \R)$, and $T>0$. Then $\psi(T,  {\psi_0},w)= {\psi_f}$, where $w(t)=u(T-t)$.

\end{proposition}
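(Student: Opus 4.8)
The plan is to build the solution as the unique fixed point of the Duhamel map and then to read off every listed property from this construction. On $C([0,t],H^3_{(V)})$ introduce
\[
\Theta(\psi)(t) := e^{-iA_V t}\psi_0 + i\int_0^t e^{-iA_V(t-s)}\big(u(s)\mu\psi(s)+v(s)\mu\zeta(s)\big)\,\dd s .
\]
The conceptual heart is a Beauchard--Laurent-type smoothing estimate (cf.\ \cite{BeauchardLaurent}) for the operator $f\mapsto \int_0^\cdot e^{-iA_V(\cdot-s)}f(s)\,\dd s$. The obstruction is that, although $\mu\in H^3$ and $\psi\in H^3_{(V)}$, the product $\mu\psi$ only lies in $H^3$ and fails the second boundary condition (indeed $(\mu\psi)''|_{x=0,1}=2\mu'\psi'|_{x=0,1}$ need not vanish), so $e^{-iA_V t}(\mu\psi)\notin H^3_{(V)}$ and the naive bound diverges. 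First I would prove the gain
\[
\Big\| \int_0^{\cdot} e^{-iA_V(\cdot - s)} u(s)\, g(s)\,\dd s \Big\|_{C([0,T],H^3_{(V)})} \le C\,\|u\|_{L^2(0,T)}\,\|g\|_{C([0,T],H^3)}
\]
for $g\in C([0,T],H^3((0,1),\C))$, by expanding on the eigenbasis $(\varphi_{k,V})$: the $H^3_{(V)}$-norm of the Duhamel term is controlled by $\sum_k \lambda_{k,V}^3|\langle g,\varphi_{k,V}\rangle|^2\,\big|\!\int_0^t e^{i\lambda_{k,V}s}u(s)\,\dd s\big|^2$, and one closes the estimate by combining an Ingham inequality for $(e^{i\lambda_{k,V}s})_k$ (legitimate since the gaps $\lambda_{k+1,V}-\lambda_{k,V}\to\infty$) with the $\ell^\infty$ control of $(\lambda_{k,V}^{3/2}\langle g,\varphi_{k,V}\rangle)$ by $\|g\|_{H^3}$.

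Granting this estimate, $\Theta$ is a contraction on $C([0,\tau],H^3_{(V)})$ as soon as $C\|\mu\|\,\|u\|_{L^2(0,\tau)}<1$, which yields local existence, uniqueness and the Duhamel identity; the source $v\mu\zeta$ contributes the term $C\|v\|_{L^2}\|\zeta\|$ to the bound but not to the Lipschitz constant. To upgrade to a constant depending only on $(T,V,\mu,R)$, I would partition $[0,T]$ into consecutive subintervals on each of which $\|u\|_{L^2}$ lies below the contraction threshold; since the squared $L^2$-norms over the pieces sum to $\|u\|_{L^2(0,T)}^2<R^2$, the number of pieces is bounded by a constant $n(R)$, and iterating the short-time estimate produces the global bound with constant of order $2^{n(R)}$.

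The remaining properties follow by standard arguments. The $L^2$-conservation when $v\equiv0$ comes from the formal identity $\tfrac{\dd}{\dd t}\|\psi\|^2=2\,\mathrm{Re}\,\langle -i(A_V-u\mu)\psi,\psi\rangle=0$ (self-adjointness of $A_V-u(t)\mu$, real $u,\mu$), made rigorous by approximating $u$ by smooth controls and passing to the limit via the a priori estimate. For \emph{differentiability}, the map $(\psi_0,u,\psi)\mapsto\Theta(\psi)$ is affine in $\psi_0$ and bilinear in $(u,\psi)$, hence smooth, so the uniform contraction principle gives that $u\mapsto\psi(T,\psi_0,u)$ is $C^1$, and differentiating the Duhamel identity identifies $\partial_u\psi(T,\psi_0,u)v$ as the solution of the stated linearized system. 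For \emph{time reversibility}, if $\Psi(t):=\psi(t,\overline{\psi_f},u)$ then $\phi(t):=\overline{\Psi(T-t)}$ solves the same equation with control $w(t)=u(T-t)$, because $A_V$ and $\mu$ are real and commute with complex conjugation; since $\phi(0)=\overline{\Psi(T)}=\psi_0$ and $\phi(T)=\overline{\Psi(0)}=\psi_f$, uniqueness gives $\psi(T,\psi_0,w)=\psi_f$.

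Finally, for the \emph{regularity} statement I would differentiate the equation in time. Setting $z:=i\partial_t\psi=(A_V-u\mu)\psi$, one gets
\[
i\partial_t z = (A_V - u\mu)z - i\,\dot u\,\mu\psi, \qquad z(0)=A_{V-u(0)\mu}\psi_0\in H^2\cap H^1_0,
\]
where $z(0)\in H^2\cap H^1_0$ precisely because $\psi_0\in H^4_{(V-u(0)\mu)}=\DD(A_{V-u(0)\mu}^2)$. At this $H^2$-level no smoothing is needed: $\mu\psi$ and $\mu z$ satisfy only the first boundary condition and hence lie in $H^2\cap H^1_0=\DD(A_V)$, on which $e^{-iA_V t}$ is unitary, and $\dot u\in L^1$ since $u\in W^{1,1}$; a Duhamel–Gr\"onwall estimate then gives $z\in C([0,T],H^2\cap H^1_0)$. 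From $-\psi''=z-(V-u\mu)\psi\in H^2$ I recover $\psi(t)\in H^4$, while $z|_{x=0,1}=\psi|_{x=0,1}=0$ force $\psi''|_{x=0,1}=0$, i.e.\ $\psi(t)\in H^4_{(V-u(t)\mu)}$. I expect the genuine difficulties to be exactly two: establishing the smoothing inequality with the sharp gain (the Ingham step, which is what makes $H^3_{(V)}$ the right space), and the bookkeeping in this last step, where the low time-regularity $u\in W^{1,1}$ and the moving domains $H^4_{(V-u(t)\mu)}$ must be tracked simultaneously.
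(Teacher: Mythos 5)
The paper offers no self-contained proof of this proposition: it defers to \cite[Propositions~2 and~3]{BeauchardLaurent} for well-posedness and differentiability, to \cite[Proposition~47]{Beauchard05} for the $H^4$ regularity, notes that a nonzero $V$ changes nothing, and calls time reversibility obvious. Your architecture --- a fixed point for the Duhamel map built on a smoothing estimate, subdivision of $[0,T]$ to make the constant depend only on $R$, the implicit/uniform-contraction argument for $C^1$ dependence, conjugation for time reversibility, and differentiation in time for the $H^4$ statement --- is exactly the one used in those references, and those parts of your sketch are sound.

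There is, however, a genuine gap in the one step you rightly call the conceptual heart. First, your smoothing estimate is \emph{false} as stated for general $g\in C^0([0,T],H^3)$: if $g(s)$ does not vanish at $x=0,1$, then $\langle g(s),\varphi_{k,V}\rangle = O(1/k)$ only, so $\lambda_{k,V}^{3/2}\langle g(s),\varphi_{k,V}\rangle$ is unbounded and no constant $C$ can work; the hypothesis $g(s)\in H^3\cap H^1_0$ (true for $g=\mu\psi$ since $\psi\in H^1_0$, but absent from your statement) is essential. Second, the quantity you propose to estimate, $\sum_k \lambda_{k,V}^3\,|\langle g,\varphi_{k,V}\rangle|^2\,\big|\int_0^t e^{i\lambda_{k,V}s}u(s)\,\dd s\big|^2$, silently pulls the time-dependent coefficient $\langle g(s),\varphi_{k,V}\rangle$ out of the time integral; the true quantity is $\sum_k \lambda_{k,V}^3\big|\int_0^t e^{i\lambda_{k,V}s}u(s)\langle g(s),\varphi_{k,V}\rangle\,\dd s\big|^2$, and Ingham cannot be applied to it directly because the integrand depends on $k$ through more than the exponential. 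The Beauchard--Laurent mechanism that closes this step is a triple integration by parts in $x$, which isolates the worst part of $\langle g(s),\varphi_{k,V}\rangle$ as a \emph{boundary term} of the form $c\,\lambda_{k,V}^{-3/2}\big((-1)^k g''(s)(1)-g''(s)(0)\big)+o(\lambda_{k,V}^{-3/2})$, i.e.\ a product of a scalar function of $s$ (here $2\mu'\psi'(s)$ evaluated at $x=0,1$) and an explicit function of $k$; only after this separation of variables does the Riesz-basis property of $(e^{i\lambda_{k,V}s})_k$ give the bound $C\|u\|_{L^2}\|g\|_{C^0([0,T],H^3\cap H^1_0)}$. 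Without this step your contraction argument, and hence the whole proposition, does not get off the ground.
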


See \cite[Propositions~2 and~3]{BeauchardLaurent} for the proof of  the well-posedness in $H_{(0)}^3$  and for the differentiability property. The property of regularity  is established in~\cite[Proposition~47]{Beauchard05}. In these references,    the case of $V=0$ is considered,  but   the case of a non-zero~$V$ is proved by literally the same arguments (see~\cite{NersesyanNersisyan1D}).  The time reversibility property is obvious. 
 Proposition~\ref{P:2.1} implies that similar properties hold for the solutions   of system \eqref{syst_Neq}. We denote by $\bp(t, \bp_0,u)$ the solution of  \eqref{syst_Neq} corresponding to $\bp_0\in \boldsymbol H^3_{(0)}$ and $u\in L^2((0,T),\R)$.

%%%%%%%%%%%%%%%%%%%%%%%%%%%%%%%%%%%%%%%%%%%%%%%%%%%%%%%%%%%%%%%%%%%%%%%%%%%%%%%%%%%%%%%%%%%%%%%%%%%%%%%
%%%%%%%%%%%%%%%%%%%%%%%%%%%%%%%%%%%%%%%%%%%%%%%%%%%%%%%%%%%%%%%%%%%%%%%%%%%%%%%%%%%%%%%%%%%%%%%%%%%%%%%
\section{Approximate controllability  }
\label{sect_controle_approche}
%%%%%%%%%%%%%%%%%%%%%%%%%%%%%%%%%%%%%%%%%%%%%%%%%%%%%%%%%%%%%%%%%%%%%%%%%%%%%%%%%%%%%%%%%%%%%%%%%%%%%%%
%%%%%%%%%%%%%%%%%%%%%%%%%%%%%%%%%%%%%%%%%%%%%%%%%%%%%%%%%%%%%%%%%%%%%%%%%%%%%%%%%%%%%%%%%%%%%%%%%%%%%%%

%%%%%%%%%%%%%%%%%%%%%%%%%%%%%%%%%%%%%%%%%%%%%%%%%%%%%%%%%%%%%%%%%%%%%%%%%%%%%%%%%%%%%%%%%%%%%%%%%%%%%%%
\subsection[Approximate controllability towards finite sums of eigenvectors]{Approximate controllability towards finite sums of \\ eigenvectors}
%%%%%%%%%%%%%%%%%%%%%%%%%%%%%%%%%%%%%%%%%%%%%%%%%%%%%%%%%%%%%%%%%%%%%%%%%%%%%%%%%%%%%%%%%%%%%%%%%%%%%%%

In this section, we assume that the following conditions are satisfied for the  functions $V, \mu \in H^4((0,1),\R)$
\begin{description}
\item[$  \boldsymbol{\mathrm{(C_1)}}$] 
 $\lag \mu \varphi_{j,V}, \varphi_{k,V} \rag \neq 0$  for all $j \in \{1,\dots,N\}$, $k \in \N^*$.
 \item[$  \boldsymbol{\mathrm{(C_2)}}$] 
$\lambda_{j,V} - \lambda_{k,V} \neq \lambda_{p,V} - \lambda_{q,V}$ for all    $j \in \{1,\dots,N\}$, $k,p,q \in \N^*$ such that $\{j,k\} \neq \{ p,q\}$ and $k \neq j$.
\end{description}
 
For any   $M\in \N^*$, let us define the sets  
\begin{align}
\CC_M &:=\textup{Span}\{\varphi_{1,V}, \ldots, \varphi_{M,V}\}, \quad  \boldsymbol \CC_M :=(\CC_M)^N       ,\label{E1}
\\ 
\boldsymbol \Be &:=\left\{ \bp\in \boldsymbol L^2 \, ; \, \prod_{j=1}^N \lag \psi^j,\varphi_{j}\rag \neq 0   \right\}.\label{E2}
\end{align}

The following theorem is the main result of this section.
\begin{theorem}\label{Approx_contr}
Assume that Conditions $\mathrm{(C_1)}$ and $\mathrm{(C_2)}$ are satisfied for the functions $V,\mu\in H^4((0,1),\R)$. Then, for any    
  $\bp_0\in  \boldsymbol \SS\cap \boldsymbol H_{(V)}^4\cap \Be$,      
 there are  $M  \in \N^*$,  $\bp_f\in \boldsymbol \CC_M$,  sequences  $T_n>0$ and   $u_n\in C_0^\ty((0,T_n), \R)$   such that
\begin{equation}\label{E:lim}
\bp(T_n, \bp_0, u_n) \underset{n \to \infty}{\longrightarrow} \bp_f \quad\text{in}\quad  \boldsymbol H^{3}. 
\end{equation}
\end{theorem}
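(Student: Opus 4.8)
My plan is to adapt the Lyapunov strategy of \cite{Nersesyan10} to the $N$-component setting, steering the state $\bp$ toward the distinguished finite sum $\bo\varphi_V=(\varphi_{1,V},\dots,\varphi_{N,V})$ up to componentwise phases; since each phased eigenvector $e^{i\theta^j}\varphi_{j,V}$ still lies in $\CC_N$, the limit will belong to $\boldsymbol\CC_M$ with $M=N$. The natural Lyapunov functional is
\[
\LL(\bp):=\sum_{j=1}^N\big(1-|\lag\psi^j,\varphi_{j,V}\rag|^2\big),
\]
which is non-negative on $\boldsymbol\SS$ and vanishes exactly when every $\psi^j$ is a unimodular multiple of $\varphi_{j,V}$. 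The role of the set $\Be$ (condition $\prod_j\lag\psi^j,\varphi_{j,V}\rag\neq0$) is precisely to guarantee that the overlaps $\lag\psi^j,\varphi_{j,V}\rag$ stay away from $0$, so that $\LL(\bp_0)<N$ and the linearization used below is non-degenerate; I would first verify that this non-degeneracy persists along the constructed trajectories.

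Differentiating along the flow of \eqref{syst_Neq}, using $A_V\varphi_{j,V}=\lambda_{j,V}\varphi_{j,V}$, the self-adjointness of $A_V$, and the fact that $V,\mu$ are real, the free part contributes a purely imaginary quantity and one is left with the feedback-linear identity
\[
\frac{\md}{\md t}\,\LL(\bp(t))=2\,u(t)\sum_{j=1}^N\textup{Im}\!\big(\overline{\lag\psi^j,\varphi_{j,V}\rag}\,\lag\mu\psi^j,\varphi_{j,V}\rag\big).
\]
Choosing the feedback $u=-\alpha\sum_j\textup{Im}(\overline{\lag\psi^j,\varphi_{j,V}\rag}\,\lag\mu\psi^j,\varphi_{j,V}\rag)$ with $\alpha>0$ small makes $\LL$ non-increasing. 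A naive LaSalle argument would only place the $\omega$-limit in the set where the bracket vanishes identically along the free flow; expanding that condition in the almost-periodic frequencies $\lambda_{j,V}-\lambda_{k,V}$ and invoking $\mathrm{(C_2)}$ to separate them and $\mathrm{(C_1)}$ to ensure each coupling $\lag\mu\varphi_{j,V},\varphi_{k,V}\rag$ is nonzero forces strong constraints, but not obviously $\LL=0$.

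Hence, as in \cite{Nersesyan10}, the robust route is a quantitative \emph{decrease lemma}: whenever $\LL(\bp)\ge\e>0$ at a reachable state with nonzero overlaps, one can steer to a state with strictly smaller $\LL$, with a decrease bounded below in terms of $\e$. I would prove this by evolving freely for a suitable time and then exploiting controllability of the linearized system of Proposition~\ref{P:2.1}; the associated Gramian/moment problem is non-degenerate exactly because $\mathrm{(C_2)}$ makes the $N$ families of transition frequencies distinct, so a single scalar control resolves them \emph{simultaneously}, while $\mathrm{(C_1)}$ actuates every transition and $\Be$ keeps the relevant coefficients nonvanishing. Iterating then gives $\inf\LL=0$ over the reachable set, producing times $T_n$ and controls with $\LL(\bp(T_n,\bp_0,u_n))\to0$. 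I expect this decrease lemma to be the main obstacle: it is where the simultaneity of $N$ equations with one control genuinely bites, and where $\mathrm{(C_1)}$ and $\mathrm{(C_2)}$ are indispensable.

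Finally, to pin down $\bp_f$ and upgrade the convergence to $\boldsymbol H^3$, I would keep the $\boldsymbol H^4_{(V)}$-norm of the trajectory bounded throughout the construction via the a priori estimate of Proposition~\ref{P:2.1} (controlling the $L^2$-norms of the $u_n$ step by step), which yields precompactness in $\boldsymbol H^3$. Since $\LL\to0$ forces the $\boldsymbol L^2$-limit to be $(e^{i\theta^1}\varphi_{1,V},\dots,e^{i\theta^N}\varphi_{N,V})$, extracting a subsequence along which the phases converge (compactness of the torus) and interpolating the uniform $\boldsymbol H^4$ bound against the $\boldsymbol L^2$ convergence gives convergence in $\boldsymbol H^3$ to some $\bp_f\in\boldsymbol\CC_N$. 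It remains to replace each control by one in $C_0^\infty((0,T_n),\R)$: using the $C^1$ (in particular continuous) dependence of the solution on $u\in L^2$ from Proposition~\ref{P:2.1}, I would smooth $u_n$ and enlarge $T_n$ slightly, padding with free evolution near the endpoints so that the support is compact in the open interval, without spoiling the final state.
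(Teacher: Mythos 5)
Your outline contains two genuine gaps, both at places where the paper's proof is structured quite differently. First, you claim the state can be driven so that $\LL\to 0$, i.e.\ to the phased tuple $(e^{i\theta^1}\varphi_{1,V},\dots,e^{i\theta^N}\varphi_{N,V})$, via a quantitative decrease lemma ``whenever $\LL\ge\e$ one can decrease $\LL$ by an amount bounded below in terms of $\e$.'' This is much stronger than what Conditions $\mathrm{(C_1)}$--$\mathrm{(C_2)}$ yield, and it is not what the paper proves. The paper's decrease statement (Proposition~\ref{P:2.2}) is a \emph{first-variation} argument: one computes $\frac{\dd}{\dd\sigma}\VV(\bp(T,\bp_0,\sigma w))\big|_{\sigma=0}$ and shows, via the almost-periodic expansion, Condition $\mathrm{(C_2)}$, and a determinant/polynomial-root count (whose nonvanishing free term is exactly where the \emph{product} structure of $\prod_j|\lag\psi^j,\varphi_j\rag|^2$ and the hypothesis $\bp_0\in\Be$ enter), that if this derivative vanishes for \emph{all} $T$ and $w$ then $\bp_0\in\boldsymbol\CC_M$ for some finite $M$. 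It gives no decrease at points of $\cup_M\boldsymbol\CC_M$, which may well be critical points with $\VV>0$; correspondingly, the theorem only asserts convergence to \emph{some} $\bp_f\in\boldsymbol\CC_M$, not to phased eigenvectors, and no quantitative decrease is ever established. The paper circumvents the need for one by a minimization argument: the infimum of $\VV$ over the set $\KK$ of $\boldsymbol H^3$-attainable limit points is attained, and the minimizer must lie in $\cup_M\boldsymbol\CC_M$, else Proposition~\ref{P:2.2} would contradict minimality. Your LaSalle/iteration scheme, as described, cannot close without the (unproved, and probably unprovable under these hypotheses) uniform decrease.

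Second, your Lyapunov functional $\LL(\bp)=\sum_j(1-|\lag\psi^j,\varphi_{j,V}\rag|^2)$ omits the term $\alpha\sum_j\|(-\partial^2_{xx}+V)^2\PP_N\psi^j\|^2$ that the paper builds into $\VV$, and this term is not cosmetic: it provides the coercivity $C(1+\VV(\bz))\ge\|\bz\|_4^2$, which is the \emph{only} source of a uniform $\boldsymbol H^4$ bound on the minimizing sequence, hence of weak $\boldsymbol H^4$ compactness and strong $\boldsymbol H^3$ convergence. Your substitute --- invoking the a priori estimate of Proposition~\ref{P:2.1} ``step by step'' --- does not work: that estimate has a constant depending on $T$ and on the $L^2$ bound of the control, and over an unbounded concatenation of control intervals it yields no uniform $\boldsymbol H^4$ bound. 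Without that bound the interpolation from $\boldsymbol L^2$ to $\boldsymbol H^3$ convergence has nothing to interpolate against. Note also that the smallness of $\alpha$ (chosen so that $\VV(\bp_0)<1$) is what guarantees the minimizer stays in $\Be$, which is needed to apply the decrease proposition at the minimizer; this interplay disappears in your additive functional.
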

\begin{proof} 
See~\cite[Theorem~2.3]{Nersesyan10} for the proof of a similar result in the case $N=1$ (in that case one gets $M=1$).   To simplify notations, we shall write $\la_k, \varphi_k$ instead of $\la_{k,V}, \varphi_{k,V}$. 
     For any $\boldsymbol z=(z^1,\ldots, z^N)\in  \boldsymbol H^4_{(V)}$, let  us define the following   Lyapunov function
\begin{equation}\label{2.2}
\VV(\bz)= \alpha \sum_{j=1}^N \|( -\p^2_{xx}+{V} )^{ 2} \PP_{N}z^j\| ^2 +1- \prod_{j=1}^N|\lag z^j,\varphi_{j}\rag |^2,
\end{equation}where 
 $\alpha>0$ is a constant that will be chosen later
  and   $\PP_{N} $ is   the
orthogonal projection  in $L^2$ onto the closure of the vector span
of $\{\varphi_{k}\}_{k\ge N+1}$, i.e.,
\begin{equation} \label{def_proj}
\PP_N(z) := \sum_{k\ge N+1} \lag z, \varphi_k \rag \varphi_k.
\end{equation}
 Clearly, we have that $\VV(\bz )\ge 0$ for any $\bz\in \boldsymbol\SS\cap \boldsymbol H_{(V)}^4$ and  $\VV(\bz )= 0$   if and only if  $\bz=(c_1\varphi_1, \ldots, c_N \varphi_N)$ for some   $c_i\in \C$ such that $  |c_i|=1, i=1, \ldots, N$. Furthermore, 
 for any $\bz\in \boldsymbol\SS\cap \boldsymbol H_{(V)}^4$,  we have
\begin{align*}
\VV(\bz)\ge\alpha \sum_{j=1}^N \|( -\p^2_{xx}+{V} )^{ 2}\PP_{N}z^j\| ^2 \ge
C_1\sum_{j=1}^N \| z^j\| ^2_4-C_2 .
\end{align*}Thus
\begin{equation}\label{2.3}
C(1+\VV(\bz))\ge \|\bz\|_4^2
\end{equation}for some constant $C>0$.
We need the following result  which a generalization of~\cite[Proposition~2.6]{Nersesyan10}.  \begin{proposition}\label{P:2.2} Under the conditions of Theorem~\ref{Approx_contr},  for any $\bp_0\in  \boldsymbol \SS\cap \boldsymbol H_{(V)}^4\cap \Be \backslash \left(\cup_{M=1}^\ty\boldsymbol \CC_M\right)$   
  there is   a  time $T>0$  and a control~$u\in
C_0^\ty((0,T),\R)$  such that 
\begin{equation*}
\VV(\bp(T,\bp_0,u))<\VV(\bp_0).
\end{equation*}\end{proposition} 
See Section~\ref{S:2.3} for the proof of this result.

\smallskip
\noindent
Let us choose $\alpha>0$ in (\ref{2.2}) so small that   $\VV(\bp_0)<1$ and define the set
\begin{align*}
\KK:=\Big\{\bp\in  \boldsymbol H^{4}_{(V)} \, ; \,   \bp(T_n, \bp_0,u_{n}) \underset{n \to \infty}{\longrightarrow} \bp
\,\,\,\text{in } \boldsymbol H^{3}
  \,\,&\text{for some}\,\,  T_n\ge0,
\,\,\, \\&\,\, u_n \in C^\ty_0( (0,T_n),\R)
      \Big\}.\nonumber
\end{align*}
Then the infimum 
$
m:=\inf_{\bp\in\KK}\VV(\bp)
$ is attained, there is $\boldsymbol e\in \KK$ such that
\begin{equation}\label{2.4}
\VV(\boldsymbol e)=\inf_{\bp\in\KK}\VV(\bp).
\end{equation}
Indeed,   any minimizing sequence $\bp_n\in\KK$, $\VV(\bp_n)\rightarrow
m$ is bounded in $\boldsymbol H^4$, by~\eqref{2.3}. Extracting a subsequence if necessary, we may assume that    $\bp_n \rightharpoonup \boldsymbol e$ in~$ \boldsymbol H^{4}$ for some $\boldsymbol e\in \boldsymbol H^4_{(V)}$. This implies that
$\VV(\boldsymbol e)\le\liminf_{n\rightarrow\ty}\VV(\bp_n)= m$. Let us show that
$\boldsymbol e\in \KK$. As $\bp_n\in\KK$, there are sequences $T_n>0$ and $u_n\in
C_0^\ty((0,T_n),\R)$ such that
\begin{align}
\|\bp(T_n, \bp_0,u_n )-\bp_n\|_{H^3_{(V)}} 
&\le\frac{1}{n}.\label{2.5}
\end{align}
On the other hand,   $\bp_n\rightarrow \boldsymbol e$ in $\boldsymbol H^{3}$, and  (\ref{2.5}) implies that
$\bp(T_n,\bp_0,u_{n}) \rightarrow \boldsymbol e$ in $\boldsymbol H^{3}$. Thus
$\boldsymbol e\in\KK$ and  $\VV( \boldsymbol e)= m$.

Let us prove that $\boldsymbol e\in \boldsymbol\CC_M$ for some $M\in \N^*$. Suppose, by contradiction, that
$ \boldsymbol e \notin \cup_{M=1}^\ty\boldsymbol \CC_M   $. It follows from (\ref{2.4}) and from the choice of
$\alpha$ that
 $\VV(\boldsymbol e)\le\VV(\bp_0)<1$. This shows that $ \boldsymbol e \in \Be$.
 Proposition~\ref{P:2.2} implies that
       there are $T>0$ and 
$u\in C^\ty_0((0,T),\R)$  such that
\begin{equation}\label{2.6}
\VV(\bp(T, \boldsymbol e,u))<\VV(\boldsymbol e).
\end{equation}
Define $\tilde{u}_n(t)=u_n(t)$, $t\in[0,T_n]$ and
$\tilde{u}_n(t)=u (t-T_n)$, $t \in[T_n,T_n+T]$.  Then $\tilde{u}_n\in
C^\ty_0((0,T_n+T),\R)$ and, by the continuity in $H^{3}$ of the resolving operator for (\ref{syst_Neq}), we get 
$$\bp (T_n+T   , \bp_0,\tilde{u}_n)\rightarrow
\bp (T , \boldsymbol e,u)\,\,\,\text{in $\boldsymbol H^{3},$}
$$ hence 
$\bp(T, \boldsymbol e,u  )\in \KK$. Together with  (\ref{2.6}), this contradicts (\ref{2.4}). Thus $\boldsymbol e\in \boldsymbol \CC_M$, and we get \eqref{E:lim} with $\bp_f=\boldsymbol e.$

\end{proof}

%%%%%%%%%%%%%%%%%%%%%%%%%%%%%%%%%%%%%%%%%%%%%%%%%%%%%%%%%%%%%%%%%%%%%%%%%%%%%%%%%%%%%%%%%%%%%%%%%%%%%%%
\subsection{Proof of Proposition~\RefNeqsSecStab}\label{S:2.3}
%%%%%%%%%%%%%%%%%%%%%%%%%%%%%%%%%%%%%%%%%%%%%%%%%%%%%%%%%%%%%%%%%%%%%%%%%%%%%%%%%%%%%%%%%%%%%%%%%%%%%%%

Let  us take any vector $\bp_0\in  \boldsymbol \SS\cap \boldsymbol H_{(V)}^4 \cap \Be \backslash (\cup_{M=1}^{\infty} \boldsymbol \CC_M)$, any time $T>0$, any control $w\in C^\ty_0((0,T),\R)$, and 
consider the mapping
\begin{equation*}
\begin{array}{cccc}
\VV(\bp (T,\bp_0,(\cdot)w)) : & \R
& \rightarrow & \R , 
\\
& \sigma & \mapsto & \VV(\bp (T,\bp_0,\sigma w)).
\end{array}
\end{equation*}
It suffices to  show
that,   for an appropriate choice of $T$ and $w$, we have
\begin{equation}\label{2.7}
\frac{\dd \VV(\bp (T,\bp_0,\sigma w))}{\dd \sigma}\Big|_{\sigma=0}\neq
0.
\end{equation} Indeed, \eqref{2.7}  implies that there is
$\sigma_0\in\R$ close to zero such that
$$
\VV(\bp (T,\bp_0,\sigma_0 w))<\VV(\bp (T,\bp_0,0))=\VV( \bp_0 ),
$$which completes the proof of Proposition~\ref{P:2.2}.

\smallskip
To prove \eqref{2.7}, notice that 
\begin{align}\label{2.8}
&\frac{\dd \VV(\bp (T,\bp_0,\sigma w))}{\dd \sigma}\Big|_{\sigma=0}
\nonumber\\
&=2 \sum_{j=1}^N\Re\Big( \alpha\lag ( -\p^2_{xx}+{V} )^{ 2} \PP_{N} \psi^j(T), ( -\p^2_{xx}+{V} )^{ 2} \PP_{N} \Psi^j(T) \rag \nonumber 
\\
& \quad - \lag \psi^j(T), \varphi_{j} \rag  \lag \varphi_{j},\Psi^j(T)\rag\prod_{ q=1, q\neq j}^N|\lag \psi^q_0,\varphi_{q}\rag |^2\Big),
\end{align}
where
\begin{equation}\label{2.9}
\psi^j(t) =\psi(t,\psi_0^j,0)=\sum_{k=1}^\ty e^{-i\la_{k}t}\lag \psi_0^j,
\varphi_{k}\rag \varphi_{k},
\end{equation} 
  and $\Psi^j$ is the solution of the linearized problem 
\begin{equation*}  
\left\{
\begin{aligned}
& i \partial_t \Psi^j = \big(- \partial^2_{xx} + V(x)\big) \Psi ^j -w(t)\mu(x)\psi^j,   \, \,\, (t,x)  \in (0,T) \times (0,1),  \\
& \Psi^j(t,0) = \Psi^j(t,1) = 0,  
\\
& \Psi^j(0,x) = 0.
\end{aligned}
\right.
\end{equation*}  Rewriting this   in the Duhamel form 
$$
\Psi^j(t)= i\int_0^t e^{-iA_V(t-\tau)}w(\tau) \mu(x)\psi(\tau)\dd \tau
$$and using \eqref{2.9}, we get that 
\begin{equation}\label{2.10}
\lag \Psi^j(T), \varphi_{p}\rag= ie^{-i\la_{p}T}\sum_{k=1}^\ty\lag
\psi_0^j, \varphi_{k}\rag \lag \mu \varphi_{k},\varphi_{p}\rag\int_0^T
 e^{-i(\la_{k}-\la_{p})\tau} w(\tau)  \dd \tau.
\end{equation}Replacing (\ref{2.9}) and (\ref{2.10}) into
(\ref{2.8}), we obtain 
\begin{align*}
\frac{\dd \VV(\bp (T,\bp_0,\sigma w))}{\dd \sigma}\Big|_{\sigma=0}= \int_0^T\Phi(\tau)w(\tau)\dd\tau,
\end{align*}where
\begin{align}\label{2.11}
i\Phi(\tau)&:= \sum_{j=1}^N\bigg(  \sum_{p=N+1,k=1}^\ty \alpha   \la_{p}^4 \lag \psi_0^j,
\varphi_{p}\rag \lag \varphi_{k},\psi_0^j \rag \lag \mu \varphi_{k},\varphi_{p}\rag
 e^{ i(\la_{k}-\la_{p})\tau}  \nonumber
\\
&\quad-\sum_{p=N+1,k=1}^\ty \alpha   \la_{p}^4 \lag  \varphi_{p}, \psi_0^j\rag \lag  \psi_0^j, \varphi_{k} \rag \lag \mu \varphi_{k},\varphi_{p}\rag  e^{ -i(\la_{k}-\la_{p})\tau}  \nonumber
\\
&\quad-\Big(\prod_{ q=1, q\neq j}^N|\lag \psi^q_0,\varphi_{q}\rag |^2 \Big) \sum_{ k=1}^\ty  \lag \psi_0^j,\varphi_{j} \rag \lag \varphi_{k},\psi^j_ 0 \rag \lag \mu \varphi_{k},\varphi_{j}\rag 
 e^{ i(\la_{k}-\la_{j})\tau}     \nonumber
\\
&\quad+\Big(\prod_{ q=1, q\neq j}^N|\lag \psi^q_0,\varphi_{q}\rag |^2\Big) \sum_{ k=1}^\ty  \lag \varphi_{j},\psi^j_ 0 \rag \lag \psi_0^j, \varphi_{k}\rag \lag \mu \varphi_{k},\varphi_{j}\rag  e^{- i(\la_{k}-\la_{j})\tau} \bigg)     \nonumber
\\
& =: \sum _{1\le k<p<\ty} \left( P( k, p) e^{i(\la_{k} -\la_{p} )\tau} + \tilde{P}(k,p) e^{-i(\lambda_k - \lambda_p) \tau} \right),
\end{align}
where   $P(k,p)$ and $\tilde{P}(k,p)$ are constants. To prove (\ref{2.7}), it suffices to show that $\Phi(\tau)\neq0$ for some $\tau\ge0$. Suppose, by contradiction, that $\Phi(\tau)=0$ for all $\tau\ge0$. Then Condition $\mathrm{(C_2)}$ and~\cite[Lemma~3.10]{Nersesyan} imply that $P(k,p) =0$ for all $k<p$.
 Using the equality  $P(k,p) =0$ for $1\le k\le N<p<\ty$   and  $\mathrm{(C_1)}$, we get  that 
  $$
\left( \alpha \la_{p}^4 +  \prod_{ q=1, q\neq k}^N|\lag \psi^q_0,\varphi_{q}\rag |^2  \right) \lag \psi_0^k, \varphi_{p}\rag \lag \varphi_{k},\psi_0^k \rag+ \sum_{j=1, j\neq k}^N \alpha \la_{p}^4  \lag\psi_0^j, \varphi_{p}\rag \lag \varphi_{k},\psi_0^j \rag=0.
  $$ 
Assume that for some integer $p>N $ we have 
\begin{equation}\label{2.12}
\sum_{j=1}^N |  \langle \psi_{0}^j ,\varphi_{p}\rangle|>0. 
\end{equation}Let us set  $a_{k}(\la):=  \la +  \prod_{ q=1, q\neq k}^N|\lag \psi^q_0,\varphi_{q}\rag |^2   $ and  consider the determinant
$$ \Lambda(\la) =
 \left|
\begin{matrix}
      a_1(\la) \langle \psi_{0}^1 ,\varphi_{1} \rag & \la \langle \psi_{0}^2 ,\varphi_{1} \rag & \cdots & \la \langle \psi_{0}^N ,\varphi_{1} \rag \\
  \la  \langle \psi_{0}^1 ,\varphi_{2} \rag & a_2(\la) \langle \psi_{0}^2 ,\varphi_{2} \rag& \cdots & \la\langle \psi_{0}^N ,\varphi_{2} \rag \\
    \vdots               & \vdots               &    \ddots    & \vdots               \\
  \la  \langle \psi_{0}^1 ,\varphi_{N} \rag&\la \langle \psi_{0}^2 ,\varphi_{N} \rag & \cdots & a_N(\la) \langle \psi_{0}^N ,\varphi_{N} \rag
\end{matrix} \right|. $$
Then $\Lambda(\la) $ is a polynomial of degree less or equal to $N$ which vanishes at $\la=\alpha \la_{p}^4$. The free term in  $\Lambda(\la) $ is $\prod_{k=1}^N a_k(0) \lag \psi^k_0, \varphi_k \rag$ which is non-zero by the assumption $\bp_0 \in \bo E$. Thus $\Lambda(\la)$ has at most $N$ roots and the number of indices~$p$ such that (\ref{2.12}) holds is finite. This gives the existence of $M \in \N^*$ such that $\bp_0 \in \bo \CC_M$ and completes the proof of Proposition~\ref{P:2.2}.

\smallskip \hfill $\square$

%%%%%%%%%%%%%%%%%%%%%%%%%%%%%%%%%%%%%%%%%%%%%%%%%%%%%%%%%%%%%%%%%%%%%%%%%%%%%%%%%%%%%%%%%%%%%%%%%%%%%%%
%%%%%%%%%%%%%%%%%%%%%%%%%%%%%%%%%%%%%%%%%%%%%%%%%%%%%%%%%%%%%%%%%%%%%%%%%%%%%%%%%%%%%%%%%%%%%%%%%%%%%%%
\section{Local exact controllability}
\label{sect_control_exact}
%%%%%%%%%%%%%%%%%%%%%%%%%%%%%%%%%%%%%%%%%%%%%%%%%%%%%%%%%%%%%%%%%%%%%%%%%%%%%%%%%%%%%%%%%%%%%%%%%%%%%%%
%%%%%%%%%%%%%%%%%%%%%%%%%%%%%%%%%%%%%%%%%%%%%%%%%%%%%%%%%%%%%%%%%%%%%%%%%%%%%%%%%%%%%%%%%%%%%%%%%%%%%%%

%%%%%%%%%%%%%%%%%%%%%%%%%%%%%%%%%%%%%%%%%%%%%%%%%%%%%%%%%%%%%%%%%%%%%%%%%%%%%%%%%%%%%%%%%%%%%%%%%%%%%%%
\subsection{Local exact controllability around finite sums of eigenstates}
\label{subsect_controle_time_reversibility}
%%%%%%%%%%%%%%%%%%%%%%%%%%%%%%%%%%%%%%%%%%%%%%%%%%%%%%%%%%%%%%%%%%%%%%%%%%%%%%%%%%%%%%%%%%%%%%%%%%%%%%%

In this section, we assume that the following conditions are satisfied for the functions $V,\mu \in H^3((0,1),\R)$.
\begin{description}
\item[$  \boldsymbol{\mathrm{(C_3)}}$] There exists $C>0$ such that
\begin{equation*}
|\lag \mu \varphi_{j,V}, \varphi_{k,V} \rag| \geq \frac{C}{k^3}, \quad \forall j \in \{1,\dots,N\}, \, \forall k \in \N^*.
\end{equation*}
\item[$  \boldsymbol{\mathrm{(C_4)}}$] $\lambda_{k,V} - \lambda_{j,V} \neq \lambda_{p,V} - \lambda_{n,V}$ for all $j,n \in \{1, \dots,N\}$, $k \geq j+1$, $p \geq n+1$ with $\{j,k \} \neq \{p,n\}$.
\item[$  \boldsymbol{\mathrm{(C_5)}}$] $1, \lambda_{1,V}, \dots, \lambda_{N,V}$ are rationally independent.
\end{description}

The goal of this section is the proof of the following theorem.
\begin{theorem} \label{th_controle_exact_local}
Assume that Conditions $\mathrm{(C_3)}-\mathrm{(C_5)}$ are satisfied for $V, \mu \in H^3((0,1),\mathbb{R})$. Let  us take any $C_0, C_f \in U_N$   and   set $\bo z_0 := C_0 \bo \varphi_V$, $\bo z_f := C_f \bo \varphi_V$. Then there exist $\delta>0$ and $T>0$ such that if we define
\begin{align*}
\mathcal{O}_{\delta,C_0} &:=  \Big\{ \bo \phi \in {\bo H}^3_{(0)} \,; \,
\langle \phi^j,\phi^k \rangle = \delta_{j=k} \text{ and } \sum_{j=1}^N \| \phi^j - z_0^j\|_{H^3_{(V)}} < \delta \Big\},
\\
\mathcal{O}_{\delta,C_f} &:=  \Big\{ \bo \phi \in {\bo H}^3_{(0)} \,; \,
\langle \phi^j,\phi^k \rangle = \delta_{j=k} \text{ and } \sum_{j=1}^N \| \phi^j - z_f^j\|_{H^3_{(V)}} < \delta \Big\},
\end{align*}
then for any $\bo \psi_0 \in \OO_{\delta,C_0}$ and  $\bo \psi_f \in \mathcal{O}_{\delta,C_f}$, there is a control $u \in L^2((0,T),\mathbb{R})$ such that the associated solution of (\ref{syst_Neq}) with initial condition 
$\bo \psi(0) = \bo \psi_0$ satisfies $\bo \psi(T) = \bo \psi_f$.
\end{theorem}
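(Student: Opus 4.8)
The plan is to combine Coron's return method with the invariance of \eqref{syst_Neq} under left multiplication by a fixed unitary matrix. I would first treat the model case $C_0=C_f=I$, i.e. local exact controllability around $\bo\varphi_V$, and then propagate it to arbitrary $C_0,C_f\in U_N$ by an equivariance-plus-compactness argument. The reason the naive approach fails is instructive: linearising \eqref{syst_Neq} around the free trajectory $(e^{-i\la_{k,V}t}\varphi_{k,V})$ with $u\equiv0$, the Duhamel formula shows that the component $\langle\Psi^j(T),\varphi_{p,V}\rangle$ of the linearised state is proportional to $\langle\mu\varphi_{j,V},\varphi_{p,V}\rangle\int_0^T e^{-i(\la_{j,V}-\la_{p,V})\tau}v(\tau)\,\dd\tau$. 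Hence the $N$ diagonal directions $p=j$ all feel the control only through the single real quantity $\int_0^T v\,\dd\tau$ (the zero-frequency moment), a rank deficiency that genuinely obstructs controllability for $N\geq2$.

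\textbf{First step (around $\bo\varphi_V$).} To remove this obstruction I would, following Coron, linearise instead around a nontrivial reference trajectory $(\bo y,\bar u)$ with $\bar u\not\equiv0$, furnished by Proposition~\ref{prop_controle_traj_ref}, whose terminal phases are left unconstrained. Because the phases of $\bo y(T)$ are free, the zero-frequency moments are absorbed into the reference and never enter the moment problem, leaving only the nonzero gaps $\la_{k,V}-\la_{j,V}$. I would then show that $v\mapsto\bo\Psi(T)$ is onto the tangent space at $\bo y(T)$ of the manifold of orthonormal $N$-frames $\{\bo\phi:\langle\phi^j,\phi^k\rangle=\delta_{j=k}\}$. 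Expanding each $\langle\Psi^j(T),\varphi_{p,V}\rangle$ on the eigenbasis gives a trigonometric moment problem with frequencies $\{\la_{k,V}-\la_{j,V}\}$: condition $(\mathrm{C_4})$ guarantees these are pairwise distinct, so the associated exponentials form a Riesz family (Ingham's inequality) and the moments can be prescribed independently, while condition $(\mathrm{C_3})$, through $|\langle\mu\varphi_{j,V},\varphi_{k,V}\rangle|\geq C/k^3$, supplies exactly the $k^{-3}$ decay needed to reach every direction of $\bo H^3_{(0)}$ with an $L^2$ control. Solvability of this moment problem is the main obstacle and the technical heart of the argument.

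\textbf{From the linearisation to exact local controllability.} Since the end-point map $u\mapsto\bo\psi(T,\cdot,u)$ is $C^1$ with the derivative described in Proposition~\ref{P:2.1}, surjectivity of the linearisation together with the inverse mapping theorem yields that, starting exactly from $\bo\varphi_V$, one reaches every frame in a neighbourhood of $\bo y(T)$. Condition $(\mathrm{C_5})$, asserting that $1,\la_{1,V},\dots,\la_{N,V}$ are rationally independent, makes the free evolution $t\mapsto(e^{-i\la_{j,V}t}\varphi_{j,V})_j$ equidistributed on the $N$-torus, which is what allows the terminal phases of the reference trajectory to be matched to those of the prescribed target. Finally, the time-reversibility property of Proposition~\ref{P:2.1} (through $w(t)=u(T-t)$) upgrades "reachable from $\bo\varphi_V$" to "reachable from any nearby $\bo\psi_0$", giving local exact controllability between any two frames close to $\bo\varphi_V$, with some $\delta>0$ and $T>0$.

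\textbf{Arbitrary $C_0,C_f$.} If $\bo\psi$ solves \eqref{syst_Neq} with control $u$, then so does $C\bo\psi$ for any fixed $C\in U_N$ with the \emph{same} control, since $C$ commutes with $A_V$ and with multiplication by $\mu$. Consequently the entire First step is equivariant: replacing $(\bo y,\bar u)$ by $(C\bo y,\bar u)$ and using that $C$ is an isometry, local exact controllability holds around every frame $C\bo\varphi_V$ with $\delta$ and $T$ independent of $C$. I would then pick a continuous path in $U_N$ from $C_0$ to $C_f$, cover its compact image by finitely many of these $\delta$-neighbourhoods chosen so that consecutive frames lie within distance $\delta$, and chain the corresponding local controllability results; concatenating the controls steers any $\bo\psi_0\in\OO_{\delta,C_0}$ to any $\bo\psi_f\in\OO_{\delta,C_f}$ in some total time $T$. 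As in \cite{Morancey_simultane}, the price of this compactness-based chaining is that the control time is not tracked explicitly.
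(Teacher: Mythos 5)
Your proposal follows essentially the same route as the paper: local controllability near $\bo\varphi_V$ via the reference trajectory of Proposition~\ref{prop_controle_traj_ref} (return method), with Condition $\mathrm{(C_5)}$ and Kronecker's theorem providing the phase-matching rotation and time reversibility closing the loop, then unitary equivariance of \eqref{syst_Neq} to move to any $C\bo\varphi_V$, and finally connectedness of $U_N$ plus a compactness/chaining argument for general $C_0,C_f$. The only differences are cosmetic (e.g.\ the paper invokes Beurling's theorem with a uniform gap rather than Ingham, and handles the radius $\delta_{\bo z}$ non-uniformly before applying compactness), so the attempt matches the paper's proof.
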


\begin{remark} 
Notice that the condition
\begin{equation*}
\lag \phi^j ,\phi^k \rag = \delta_{j=k}, \quad \forall j,k \in \{1,\dots,N\}
\end{equation*}
is equivalent to the fact that $\bo \phi$ is unitarily equivalent to $\bo \varphi_V$. In this section, we will always consider such initial conditions. Thus, the associated trajectories will satisfy the following invariants
\begin{equation} \label{invariants}
\lag \psi^j (t) , \psi^k (t)  \rag \equiv \delta_{j=k}, \quad \forall j,k \in \{1,\dots,N\}.
\end{equation}
\end{remark}

\begin{remark} \label{rmk_quantum_gate}
A quantum logical gate is a unitary operator $\hat{\UU}$ in $L^2((0,1),\C)$ such that for some $n \in \N^*$, the space $\text{Span} \{ \varphi_{1,V}, \dots, \varphi_{n,V} \}$ is stable for $\hat{\UU}$. 
Designing such a quantum gate means finding a control $u \in L^2((0,T),\R)$ such that the associated solution of (\ref{syst_Neq}) with initial condition $(\varphi_{1,V}, \dots, \varphi_{n,V})$ satisfies
\begin{equation*}
\left( \psi^1(T), \dots , \psi^n(T) \right) = \left( \hat{\UU} \varphi_{1,V}, \dots, \hat{\UU} \varphi_{n,V} \right).
\end{equation*}
See \cite{BCClogical_gate} for $L^2$--approximate realization of such quantum logical gates with error estimates and numerical simulations on two classical examples.
Theorem~\ref{th_controle_exact_local} thus proves exact realization of quantum logical gates in large time under Conditions~$\mathrm{(C_3)}-\mathrm{(C_5)}$ of size $n$. Applying directly our Main Theorem leads to exact realization of any quantum gate, for an arbitrary potential with a generic dipole moment. 
\end{remark}

The proof of  Theorem~\ref{th_controle_exact_local}  is based on the following   proposition which is an adaptation of~\cite[Theorem~1.5]{Morancey_simultane}.
\begin{proposition} \label{prop_controle_traj_ref}
Assume that Conditions $\mathrm{(C_3)}$ and  $\mathrm{(C_4)}$ are satisfied for $V, \mu \in H^3((0,1),\R)$. For any $T>0$, there exist $\theta_1, \dots, \theta_N \in \R$, $\delta >0$, and a~$C^1$ map
\begin{equation*}
\Gamma : \mathcal{O}_\delta^0 \times \mathcal{O}_\delta^f \to L^2((0,T),\R),
\end{equation*}
where
\begin{align*}
\mathcal{O}_\delta^0 :&=  \Big\{  \bo \phi \in {\bo H}^3_{(0)} \,; \,
\lag \phi^j,\phi^k \rag = \delta_{j=k} \text{ and } \sum_{j=1}^N \| \phi^j - \varphi_{j,V}\|_{H^3_{(V)}} < \delta \Big\},
\\
\mathcal{O}_\delta^f :&=  \Big\{  \bo \phi \in {\bo H}^3_{(0)} \,; \,
\lag \phi^j,\phi^k \rag = \delta_{j=k} \text{ and } \sum_{j=1}^N \| \phi^j -  e^{i \theta_j}\varphi_{j,V}\|_{H^3_{(V)}} < \delta \Big\},
\end{align*}
such that for any initial condition $\bo \psi_0 \in \mathcal{O}_\delta^0$ and for any target $\bo \psi_f \in \mathcal{O}_\delta^f$, the solution of system (\ref{syst_Neq}) associated to the control $u := \Gamma \big( \bo \psi_0, \bo \psi_f \big)$ satisfies $\bo \psi(T) = \bo \psi_f$.
\end{proposition}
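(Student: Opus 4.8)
The plan is to obtain $\Gamma$ from the inverse mapping theorem applied to the end-point map, after replacing the free evolution by a suitable reference trajectory in the spirit of Coron's return method. Throughout, one works on the manifold $\mathcal{M} := \{\boldsymbol\phi\in\boldsymbol H^3_{(0)} : \langle\phi^j,\phi^k\rangle = \delta_{j=k}\}$ of orthonormal frames, which is preserved by the flow of \eqref{syst_Neq} because of the invariants \eqref{invariants}. By Proposition~\ref{P:2.1}, for fixed $T$ the map $u \mapsto \boldsymbol\psi(T,\boldsymbol\psi_0,u)$ is $C^1$ from $L^2((0,T),\R)$ into $\boldsymbol H^3_{(0)}$ and depends $C^1$-smoothly on $\boldsymbol\psi_0$. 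Hence it suffices to exhibit, for a well-chosen reference control, a bounded right inverse of its differential depending continuously on the base point, and then to invoke a parametrized inverse mapping theorem to produce the $C^1$ control map $\Gamma$ on a neighbourhood $\mathcal{O}_\delta^0\times\mathcal{O}_\delta^f$.

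First I would construct the reference trajectory. Following \cite{Morancey_simultane}, I look for a control $\gamma$ and a trajectory $\boldsymbol\psi^{ref}$ of \eqref{syst_Neq} with $\boldsymbol\psi^{ref}(0)=\boldsymbol\varphi_V$ whose final state is a tuple of pure eigenstates $\boldsymbol\psi^{ref}(T)=(e^{i\theta_1}\varphi_{1,V},\dots,e^{i\theta_N}\varphi_{N,V})$, the phases $\theta_1,\dots,\theta_N$ being left free and fixed a posteriori by the construction. This is the decisive point: if one prescribes the phases, as is forced when one aims at the bare eigenstates up to a global phase and/or delay (cf. \cite{Morancey_simultane}), the trigonometric moment problem governing the diagonal entries $\langle\psi^{ref,j}(T),\varphi_{j,V}\rangle$ carries the frequency $0$ with multiplicity $N$, which a single real control cannot resolve once $N\geq 4$. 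Leaving the phases free suppresses these $N$ frequency-$0$ equations, so that only nonzero frequencies survive.

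Next I would prove that the linearized system around $\boldsymbol\psi^{ref}$ is exactly controllable onto the tangent space $T_{\boldsymbol z_f}\mathcal{M}$, where $\boldsymbol z_f:=\boldsymbol\psi^{ref}(T)$. Writing $\Psi^j$ for the solution of the linearized equation with source $v(t)\mu\,\psi^{ref,j}$, the components $\langle\Psi^j(T),\varphi_{p,V}\rangle$ are, up to the twist induced by the reference propagator, the Fourier coefficients $\int_0^T e^{i(\lambda_{p,V}-\lambda_{j,V})\tau}v(\tau)\,\dd\tau$ weighted by $\langle\mu\varphi_{j,V},\varphi_{p,V}\rangle$. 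Thus surjectivity reduces to a trigonometric moment problem at the frequencies $\lambda_{p,V}-\lambda_{j,V}$. Condition $\mathrm{(C_4)}$ guarantees that the frequencies attached to distinct directions are pairwise distinct, and the Weyl asymptotics $\lambda_{p,V}\sim\pi^2 p^2$ make the spectral gaps tend to infinity, so the relevant exponentials form a Riesz basis of their span for every $T>0$ (the finitely many low frequencies being handled by plain linear algebra, the high-frequency tail by an Ingham-type estimate). Condition $\mathrm{(C_3)}$, i.e. the lower bound $|\langle\mu\varphi_{j,V},\varphi_{p,V}\rangle|\geq C/p^3$, is exactly calibrated to the $H^3_{(V)}$ norm: it turns target data of class $H^3$ into $\ell^2$ moment data, so the problem is solvable in $L^2((0,T),\R)$ with a bounded right inverse. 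The finite-dimensional $U_N$-block — the off-diagonal entries and, crucially, the phase directions — is recovered here precisely because the reference trajectory is nontrivial, which is where the return method does its work.

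Finally, the inverse mapping theorem applied to the $C^1$ end-point map, whose differential at $\gamma$ is onto with the bounded right inverse just built, produces $\delta>0$ and the $C^1$ map $\Gamma$ steering any $(\boldsymbol\psi_0,\boldsymbol\psi_f)\in\mathcal{O}_\delta^0\times\mathcal{O}_\delta^f$ to $\boldsymbol\psi(T)=\boldsymbol\psi_f$. I expect the heart of the difficulty to lie in the construction of the reference trajectory together with the surjectivity of its linearization onto the phase directions: reconciling the single scalar control with the $N$-fold multiplicity of the frequency $0$ is exactly the obstruction that defeats a direct extension of \cite{Morancey_simultane} beyond $N=3$, and the free-phase device is what removes it. The remaining ingredients — the Riesz-basis solvability under $\mathrm{(C_4)}$ and the $H^3$ calibration under $\mathrm{(C_3)}$ — are by comparison routine adaptations of \cite{BeauchardLaurent}.
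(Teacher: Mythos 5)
Your overall architecture (reference trajectory with free phases, moment problem for the linearization, inverse mapping theorem) matches the paper's, but there is a genuine gap at the point you dismiss as where ``the return method does its work'': the controllability of the $N$ diagonal directions $\lag \Psi^j(T),\psi^{j,\eta}_{ref}(T)\rag$ of the linearized system. Leaving the phases $\theta_j$ free does remove the frequency $0$ from the moment problem used to \emph{construct} the reference trajectory (this is why Proposition~\ref{prop_traj_ref} goes through for arbitrary $N$), but it does not remove it from the \emph{linearized controllability} problem: the target space $\bo X_T$ still contains the $N$ imaginary diagonal components, all of which correspond to the frequency $0$, and the obstruction (\ref{linearise_Neqs_NC}) shows that around the free trajectory only a one-dimensional combination of them is reachable. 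A reference trajectory that is merely ``nontrivial'' does not fix this; a single scalar control cannot resolve a frequency of multiplicity $N$ by a Riesz-basis/Ingham argument, which is exactly why the index set $\II$ in the paper retains only the single diagonal pair $(N,N)$.

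What actually closes this gap in the paper is the quantitative design condition (\ref{traj_ref_condition_minimalite}): the reference control is built so that at $N-1$ distinct intermediate times $\varepsilon_k$ the quantities $\lag \mu\psi^{j,\eta}_{ref}(\varepsilon_k),\psi^{j,\eta}_{ref}(\varepsilon_k)\rag$ are perturbed by $\eta\,\delta_{j=k}$. This makes the diagonal moment functions $f^\eta_{j,j}$ of (\ref{def_fjj}) genuinely distinct from one another and from $f^\eta_0$, so that the enlarged family $\Xi=(f^\eta_n)_{n\in\Z}\cup\{f^\eta_{1,1},\dots,f^\eta_{N-1,N-1}\}$ is \emph{minimal} in $L^2((0,T),\C)$; the associated biorthogonal family (\ref{def_famille_biorthogonale}) then supplies the correction terms that steer the remaining $N-1$ diagonal directions without disturbing the off-diagonal ones (Proposition~\ref{prop_controle_linearise_1D}). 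Your proposal contains no substitute for this mechanism, so as written the linearization around your reference trajectory need not be onto $\bo X_T$ and the inverse mapping theorem cannot be invoked. A secondary, more minor omission: the inverse mapping theorem in the paper only controls the projections $\tilde{\PP}_j(\psi^j(T))$ onto $\bo X_T$, and one still needs the algebraic argument of Section~\ref{subsect_control_nonlineaire}, based on the invariants (\ref{invariants}) and the positivity conditions (\ref{Re_cible>0})--(\ref{Re_final>0}), to upgrade equality of projections to $\bo\psi(T)=\bo\psi_f$; your appeal to a ``parametrized inverse mapping theorem on the manifold'' leaves this reconstruction step unaddressed.
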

In the case $N=2$ and $V=0$, the previous proposition is exactly \cite[Theorem~1.2]{Morancey_simultane} with $\theta_j = \theta - \lambda_{j,V} T$. As here we do not impose any condition on the phase terms $\theta_j$, the proof of Proposition~\ref{prop_controle_traj_ref} does not introduce new ideas with respect to \cite{Morancey_simultane}. Anyway, dealing with an arbitrary number of equations (instead of two or three equations in \cite{Morancey_simultane}) needs some adaptations that are described in Sections~\ref{subsect_traj_ref}, \ref{subsect_control_linearise}, and \ref{subsect_control_nonlineaire}. Dealing with a potential $V$ instead of $V=0$ is done with literally the same arguments.

\smallskip
\noindent
To highlight the novelties of this work, we postpone the proof of Proposition~\ref{prop_controle_traj_ref} to Section~\ref{subsect_traj_ref} and first prove how this proposition implies Theorem~\ref{th_controle_exact_local}. We start with the proof of  Theorem~\ref{th_controle_exact_local} in the particular case $C_0=C_f=I_N$, where~$I_N$ is the $N\times N$ identity matrix. This is done using Proposition~\ref{prop_controle_traj_ref}, a rotation phenomenon  for the solution corresponding to the null control on a suitable time interval,  and a time reversibility argument. Then, for any $C \in U_N$, using a linearity argument, we prove Theorem~\ref{th_controle_exact_local} in the case $C_0=C_f=C$. We end the proof using connectedness of the set of unitary matrices and a compactness argument.

\begin{proof}[Proof of Theorem~\ref{th_controle_exact_local}.]
To simplify notations, until the end of Section~\ref{sect_control_exact}, we shall write $\lambda_k$, $\varphi_k$ instead of $\lambda_{k,V}$, $\varphi_{k,V}$.

\medskip
\textit{First step : } proof in the case $C_0=C_f=I_N$.

\noindent
Let us take any $T>0$. Let  $\delta>0$ and $\theta_1,\dots,\theta_N$ be the constants  given in Proposition~\ref{prop_controle_traj_ref}. Let $\bo \psi_0, \bo \psi_f \in \OO_{\delta,I_N}$.

\noindent
As $\mathcal{O}_{\delta,I_N}= \mathcal{O}_\delta^0$, there exists  $u \in L^2((0,T),\R)$ such that the associated solution of (\ref{syst_Neq}) with initial condition~$\bo \psi_0$ satisfies
\begin{equation} \label{time_reversibility_1ere_etape}
\bo \psi(T) = (e^{i\theta_1} \varphi_{1} , \dots, e^{i\theta_N} \varphi_{N}).
\end{equation}
Using Condition $\mathrm{(C_5)}$ and the Kronecker theorem on diophantine approximation (see e.g.~\cite[Corollary 10]{Schmidt}), there exists a rotation time $T_r>0$ such that
\begin{equation*}
|\lambda_{j}|^{3/2} \left| e^{i(2 \theta_j - \lambda_{j} T_r)} - 1 \right| < \frac{\delta}{N}, \quad \forall j \in \{1,\dots,N\}.
\end{equation*}
Thus, it comes that
$\begin{displaystyle}
\sum_{j=1}^N \| e^{i(\theta_j - \lambda_{j} T_r)} \varphi_{j} - e^{-i \theta_j} \varphi_{j}\|_{H^3_{(V)}} < \delta.
\end{displaystyle}$
Together with (\ref{time_reversibility_1ere_etape}), this implies that if we extend $u$ by zero on $(T,T+T_r)$ then
\begin{equation*}
\sum_{j=1}^N \| \psi^j(T+T_r) - e^{-i\theta_j} \varphi_{j} \|_{H^3_{(V)}} < \delta.
\end{equation*}
Thus,
\begin{equation} \label{time_reversibility_2e_etape}
\overline{\bo \psi}(T+T_r) \in \mathcal{O}_\delta^f.
\end{equation}
As $\bo \psi_f \in \mathcal{O}_{\delta,I_N}= \mathcal{O}_\delta^0$ and the eigenvectors $\varphi_j$ being real-valued, we have $\overline{\bo \psi}_f \in \mathcal{O}_\delta^0$. Then, Proposition~\ref{prop_controle_traj_ref} implies the existence of $v \in L^2((0,T),\R)$ such that the associated solution of (\ref{syst_Neq}) with initial condition $ \overline{\bo \psi}_f$ equals to  $\overline{\bo \psi}(T+T_r)$ at time $T$.
Finally, the time reversibility property proves that if $u$ is defined  by $u(T+T_r+t) = v(T- t)$ for $t\in (0,T)$, then the associated solution of (\ref{syst_Neq}) with initial condition $\bo \psi_0$ satisfies
\begin{equation} \label{time_reversibility_3e_etape}
\bo \psi (T+T_r+T) = \bo \psi_f.
\end{equation}
This ends the proof of Theorem~\ref{th_controle_exact_local} in the case $C_0=C_f=I_N$ in time $T^* :=2T+T_r$.

\medskip
\textit{Second step :} proof in the case $C_0=C_f=C$.

\noindent
Let $\delta>0$ be as in the first step, $C \in U_N$, and $\bo z := C \bo \varphi$. Let $\delta_{\bo z}>0$ be sufficiently small to satisfy
\begin{equation*}
C^* \Big(B_{\bo H^3_{(V)}}(\bo z,\delta_{\bo z}) \Big) \subset B_{\bo H^3_{(V)}}(\bo \varphi, \delta).
\end{equation*}
Let us take any $\bo \psi_0, \bo \psi_f \in \mathcal{O}_{\delta_{\bo z},C}$ and  define 
\begin{equation} \label{cible_mat_unitaire}
\tilde{\bo \psi}_0 := C^* \bo \psi_0, \quad
\tilde{\bo \psi}_f := C^* \bo \psi_f.
\end{equation}
The unitarity of $C$ implies     that $\lag \tilde{\psi}^j_0, \tilde{\psi}^k_0\rag = \delta_{j=k}$ and  $\lag \tilde{\psi}^j_f, \tilde{\psi}^k_f\rag = \delta_{j=k}$. 
Thus, from the definition of $\delta_{\bo z}$ it follows that  $\tilde{\bo \psi}_0, \tilde{\bo \psi}_f \in \mathcal{O}_{\delta, I_N}$. Then, by the first step, there is a control $u \in L^2((0,T^*),\R)$ such that 
$$
\bp (T^*,\tilde{\bo \psi}_0,u)=\tilde{\bo \psi}_f.
$$   
Since system (\ref{syst_Neq}) is linear with respect to the state, the resolving operator commutes with $C$. Thus,  in view of (\ref{cible_mat_unitaire}), we have  
\begin{equation}\label{E:com}
\bp (T^*, {\bo \psi}_0,u)=\bp (T^*, C\tilde{\bo \psi}_0,u)=C\bp (T^*,  \tilde{\bo \psi}_0,u)=C\tilde{\bo \psi}_f={\bo \psi}_f.
\end{equation}This ends the proof the second step.

\medskip
\textit{Third step :} conclusion.

\noindent
Since $U_N$ is connected, there is a continuous mapping $t \in [0,1] \mapsto C(t) \in U_N$ with $C(0) = C_0$ and $C(1) = C_f$. By the previous step, for any $\bo z \in F := \left\{ C(t) \bo \varphi \, ; \, t \in [0,1] \right\}$, there is   $\delta_{\bo z}>0$ such that   (\ref{syst_Neq})   is exactly controllable  in~$B_{\bo H^3_{(V)}}(\bo z,\delta_{\bo z}) $  in time $T^*$. Using the compactness of the set $F$, we get the existence of $\bo z_j \in F$, $j=1, \dots,L$ with $L \in \N^*$ such that 
\begin{equation*}
F \subset \bigcup_{j=0}^L B_{\bo H^3_{(V)}}(\bo z_j , \delta_{\bo z_j}).
\end{equation*}
Without loss of generality, we can assume that $\bo z_L = \bo z_f$.
Finally, setting $T := (L+1) T^*$ and $\de:=\min\{\delta_{\bo z_0}, \delta_{\bo z_f}\}$, we see that for any  $\bo \psi_0 \in \OO_{\delta,C_0}$ and  $\bo \psi_f \in \mathcal{O}_{\delta,C_f}$ there is   a control $u \in L^2((0,T),\R)$ such that 
$$
\bp (T, {\bo \psi}_0,u)= {\bo \psi}_f
$$
This completes the proof of Theorem \ref{th_controle_exact_local}.
\end{proof}

\noindent
The rest of this section is dedicated to the proof of Proposition~\ref{prop_controle_traj_ref}.

%%%%%%%%%%%%%%%%%%%%%%%%%%%%%%%%%%%%%%%%%%%%%%%%%%%%%%%%%%%%%%%%%%%%%%%%%%%%%%%%%%%%%%%%%%%%%%%%%%%%%%%
\subsection{Construction of the reference trajectory}
\label{subsect_traj_ref}
%%%%%%%%%%%%%%%%%%%%%%%%%%%%%%%%%%%%%%%%%%%%%%%%%%%%%%%%%%%%%%%%%%%%%%%%%%%%%%%%%%%%%%%%%%%%%%%%%%%%%%%

\noindent
The proof of Proposition~\ref{prop_controle_traj_ref} relies on the return method introduced by Coron (see \cite[Chapter 6]{CoronBook} for a comprehensive introduction). 
The natural strategy to obtain local exact controllability around $\bo \varphi$ is to prove controllability for the linearized system
\begin{equation} \label{linearise_Neqs_eigenstates}
\left\{
\begin{aligned}
&i \partial_t	\Psi^j = \left(-\partial^2_{xx} + V(x) \right) \Psi^j - v(t) \mu(x) \Phi_j,  &(t,x)& \in (0,T) \times (0,1),
\\
&\Psi^j(t,0)= \Psi^j(t,1) = 0, &j& \in \{1,\dots ,N\},
\\
&\Psi^j(0,x) = 0.
\end{aligned}
\right.
\end{equation}
However, straightforward computations lead to 
\begin{equation} \label{linearise_Neqs_NC}
\lag \mu \varphi_k, \varphi_k \rag \lag \Psi^j(T), \Phi_j(T) \rag = 
\lag \mu \varphi_j, \varphi_j \rag \lag \Psi^k(T), \Phi_k(T) \rag,
\quad \forall j,k \in \{1, \dots, N\}.
\end{equation}
Thus, the linearized system (\ref{linearise_Neqs_eigenstates}) is not controllable and we use the return method.
In our setting, the main idea of this method is to design of a reference control $u_{ref}$ such that the associated solution $\bo \psi_{ref}$ of system (\ref{syst_Neq}) with initial condition $\bo \varphi$ satisfies 
\begin{equation*}
\bo \psi_{ref}(T) = (e^{i \theta_1} \varphi_{1}, \dots, e^{i \theta_N} \varphi_{N}),
\end{equation*} 
for some $\theta_1, \dots, \theta_N \in \R$ and  the linearized system around this trajectory is controllable. Then, an application of the inverse mapping theorem leads to local controllability of (\ref{syst_Neq}) around the trajectory $(u_{ref}, \bo \psi_{ref})$ and proves  Proposition~\ref{prop_controle_traj_ref}. The main ideas of this proof are adapted from \cite[Theorem 1.5]{Morancey_simultane}.  For the sake of completeness, we precise the adaptations that have been made and give a sketch of the proofs.
The reference trajectory is designed in the following  proposition.
\begin{proposition}
\label{prop_traj_ref}
Assume that Conditions $\mathrm{(C_3)}$ and $\mathrm{(C_4)}$ are satisfied for $V, \mu \in H^3((0,1),\R)$. Let $T>0$ and $0 < \varepsilon_0 < \dots < \varepsilon_{N-1} =: \varepsilon < T$. There exist $\overline{\eta} >0$ and $C>0$ such that for every $\eta \in (0, \overline{\eta})$, there are $\theta_1^\eta, \dots, \theta_N^\eta \in \R$ and a control $u_{ref}^{\eta} \in L^2((0,T),\R)$ with
\begin{equation} \label{traj_ref_borne_controle}
\|u_{ref}^{\eta}\|_{L^2(0,T)} \leq C \eta
\end{equation}
such that the associated solution $\bo \psi_{ref}^{\eta}$ of (\ref{syst_Neq}) with initial condition $\bo \varphi$ satisfies for $j \in \{1,\dots,N\}$ and  $k \in \{1,\dots,N-1\}$
\begin{equation} \label{traj_ref_condition_minimalite}
\lag \mu \psi^{j,\eta}_{ref}(\varepsilon_k),\psi^{j,\eta}_{ref}(\varepsilon_k) \rag = \lag \mu \varphi_{j}, \varphi_{j} \rag + \eta \delta_{j=k} , 
\end{equation}
and
\begin{equation} \label{traj_ref_etat_final}
\bo \psi_{ref}^{\eta}(T) = \big( e^{i \theta_1^\eta} \varphi_{1}, \dots, e^{i \theta_N^\eta} \varphi_{N} \big).
\end{equation}
\end{proposition}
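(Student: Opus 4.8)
The plan is to realize Coron's return method perturbatively, constructing $u_{ref}^\eta$ as a control of size $\eta$ around the zero control. For $\eta=0$ the choice $u\equiv 0$ already works: the free evolution gives $\psi^j(t)=\varphi_j e^{-i\lambda_j t}=\Phi_j(t)$, so that $\bo\psi(T)=(e^{-i\lambda_1 T}\varphi_1,\dots,e^{-i\lambda_N T}\varphi_N)$, which furnishes $\theta_j^0=-\lambda_j T$ and \eqref{traj_ref_etat_final}, while $\lag\mu\Phi_j(\varepsilon_k),\Phi_j(\varepsilon_k)\rag=\lag\mu\varphi_j,\varphi_j\rag$ because the global phase cancels, so \eqref{traj_ref_condition_minimalite} holds with $\eta=0$. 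First I would set up the nonlinear map $F$ sending a control $u$ to the data that has to be prescribed: the family of components $\lag\psi^j(T),\varphi_p\rag$ with $p\neq j$ (which must all vanish, forcing $\psi^j(T)$ to be collinear with $\varphi_j$, hence equal to $e^{i\theta_j}\varphi_j$ since $\|\psi^j(T)\|=1$) together with the scalars $\lag\mu\psi^j(\varepsilon_k),\psi^j(\varepsilon_k)\rag-\lag\mu\varphi_j,\varphi_j\rag$ for $j\in\{1,\dots,N\}$, $k\in\{1,\dots,N-1\}$. The goal is then to solve $F(u_{ref}^\eta)=(0,\eta\delta_{j=k})$ by the inverse mapping theorem, using the $C^1$ regularity of the control-to-state map granted by Proposition~\ref{P:2.1}.

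Next I would compute the differential $F'(0)$. It is governed by the linearized system \eqref{linearise_Neqs_eigenstates}, whose Duhamel form $\Psi^j(t)=i\int_0^t e^{-iA_V(t-\tau)}v(\tau)\mu\Phi_j(\tau)\dd\tau$ yields the Fourier coefficients $\lag\Psi^j(t),\varphi_p\rag=ie^{-i\lambda_p t}\lag\mu\varphi_j,\varphi_p\rag\int_0^t e^{i(\lambda_p-\lambda_j)\tau}v(\tau)\dd\tau$, i.e.\ pure trigonometric moments of the single real control $v$. Since $\lag\mu\varphi_j,\varphi_p\rag\neq0$ by $\mathrm{(C_3)}$, the vanishing of the orthogonal part of $\psi^j(T)$ linearizes to $\int_0^T e^{i(\lambda_p-\lambda_j)\tau}v(\tau)\dd\tau=0$ for all $j\le N$, $p\neq j$; and writing $\lag\mu\psi^j,\psi^j\rag=\lag\mu\varphi_j,\varphi_j\rag+2\Re\lag\mu\Phi_j(\varepsilon_k),\Psi^j(\varepsilon_k)\rag+O(v^2)$, the intermediate conditions linearize to prescribing the partial moments up to $\varepsilon_k$ so that $2\Re\lag\mu\Phi_j(\varepsilon_k),\Psi^j(\varepsilon_k)\rag=\delta_{j=k}$. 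After rescaling $v=\eta\tilde v$ the whole problem becomes a single trigonometric moment problem for the scalar control $\tilde v$.

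The hard part, and the crux of the argument, is to solve this moment problem: one scalar control must simultaneously realize the infinitely many final-time conditions (for $j\le N$, $p\neq j$) given by full integrals over $[0,T]$, and the finitely many intermediate conditions at $\varepsilon_1,\dots,\varepsilon_{N-1}$ given by partial integrals. I would treat it as in \cite{Morancey_simultane}, adjusting the $k$-th diagonal moment on the subinterval $[\varepsilon_{k-1},\varepsilon_k]$ while leaving the return data untouched. Condition $\mathrm{(C_4)}$ guarantees that the frequencies $\lambda_p-\lambda_j$ ($j\le N$, $p\ge j+1$) are pairwise distinct, so that the corresponding exponentials form a Riesz basis of their closed span in $L^2(0,T)$ for the fixed (arbitrary) $T$, since the spectral gaps $\lambda_{p+1}-\lambda_p$ tend to infinity; this makes the moments independently prescribable. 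Condition $\mathrm{(C_3)}$, via $|\lag\mu\varphi_j,\varphi_p\rag|\ge C/p^3$, prevents any coefficient from degenerating and matches the decay of the prescribed data with the $H^3$ Sobolev weights, so that the solution $\tilde v$ belongs to $L^2(0,T)$. This is precisely the obstruction \eqref{linearise_Neqs_NC} being circumvented by the return method.

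Finally, having exhibited a bounded right inverse of $F'(0)$, I would apply the inverse mapping theorem to produce, for every $\eta\in(0,\overline\eta)$, a control $u_{ref}^\eta$ with $F(u_{ref}^\eta)=(0,\eta\delta_{j=k})$; since the local inverse maps $0$ to $0$ and is Lipschitz (equivalently, by the scaling $v=\eta\tilde v$), this yields \eqref{traj_ref_borne_controle}. The phases $\theta_j^\eta$ are then read off from the automatically collinear final states $\psi^{j,\eta}_{ref}(T)=e^{i\theta_j^\eta}\varphi_j$, giving \eqref{traj_ref_etat_final}, while \eqref{traj_ref_condition_minimalite} holds by construction. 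I expect essentially all the difficulty to sit in the simultaneous moment problem of the previous paragraph, the inverse-function-theorem wrapper being routine once Proposition~\ref{P:2.1} is in hand.
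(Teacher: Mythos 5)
Your overall strategy (return method around the free trajectory, linearization, trigonometric moment problems under $\mathrm{(C_3)}$--$\mathrm{(C_4)}$, inverse mapping theorem) is the one the paper follows, but the functional setup of your map $F$ contains a genuine flaw. You place in the codomain \emph{all} off-diagonal components $\lag \psi^j(T),\varphi_p\rag$, $p\neq j$, $j\le N$. For a real-valued control the linearization gives
$\lag \Psi^j(T),\varphi_p\rag = i e^{-i\lambda_p T}\lag\mu\varphi_j,\varphi_p\rag \int_0^T e^{i(\lambda_p-\lambda_j)\tau}v(\tau)\,\dd\tau$,
so for $j,p\le N$ the two coordinates $\lag\Psi^j(T),\varphi_p\rag$ and $\lag\Psi^p(T),\varphi_j\rag$ are both determined by the single complex moment at frequency $\lambda_p-\lambda_j$, the second being (up to fixed nonzero real factors) its complex conjugate. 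Hence $\md F(0)$ misses a two-real-dimensional subspace for every pair $j<p\le N$, admits no right inverse on your codomain, and the inverse mapping theorem cannot be invoked as stated. You half-notice the problem when you restrict the Riesz-basis argument to $p\ge j+1$, but you never reconcile that restriction with the codomain of $F$. The paper resolves exactly this point: it only requires the projections $\PP_j(\psi^j(T))$ onto $\overline{\mathrm{Span}}\{\varphi_k\}_{k\ge j+1}$ to vanish (codomain $\bo X$ of (\ref{def_X1})), so that only the pairwise distinct frequencies $\lambda_k-\lambda_j$, $k\ge j+1$, enter the moment problem, and then recovers $\psi^j(T)=e^{i\theta_j}\varphi_j$ inductively from the conserved inner products $\lag\psi^j(T),\psi^k(T)\rag=\delta_{j=k}$ of (\ref{invariants}); your appeal to $\|\psi^j(T)\|=1$ alone does not suffice once only the components with $k>j$ are killed. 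This reduction-by-invariants is the missing idea, not a routine detail.

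A second, lesser divergence: the paper does not run one global inverse mapping theorem on $[0,T]$ but two sequential ones, first on $(\varepsilon_0,\varepsilon)$ to realize the intermediate conditions (\ref{traj_ref_condition_minimalite}) via the finite-dimensional map $\tilde\Theta$, then on $(\varepsilon,T)$ to annihilate the projections, the second map taking the (perturbed) state at time $\varepsilon$ as an extra argument. This time-splitting decouples the finitely many partial-time functionals from the infinitely many final-time moments, so the joint surjectivity that a single map would require never has to be checked. If you insist on a single map you must prove that independence yourself; your remark about adjusting the $k$-th diagonal moment on $[\varepsilon_{k-1},\varepsilon_k]$ while leaving the return data untouched gestures at the right decomposition but is not carried out.
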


\begin{remark}
As in~\cite{Morancey_simultane}, the conditions (\ref{traj_ref_condition_minimalite}), together with an appropriate choice  of the parameter $\eta$,   will imply the controllability of the linearized system around this reference trajectory (see Section~\ref{subsect_control_linearise}).
\end{remark}
\medskip

\begin{proof}[Sketch of the proof of Proposition~\ref{prop_traj_ref}.] We split the proof in two steps. In   the first step, we  construct   $u_{ref}^{\eta}$ on $(0,\varepsilon)$ such that (\ref{traj_ref_condition_minimalite}) is satisfied. Then in the second step, we extend   $u_{ref}^{\eta}$ to $(\varepsilon,T)$ in a such way that (\ref{traj_ref_etat_final}) is verified.

\medskip
\textit{First step :}
Let us take $u_{ref}^{\eta} \equiv 0$ on $[0, \varepsilon_0)$. Following the proof of~\cite[Proposition 3.1]{Morancey_simultane},  we construct a control $u_{ref}^{\eta}$ such that condition~(\ref{traj_ref_condition_minimalite}) is satisfied and  
\begin{equation} \label{traj_ref_borne_controle1}
\| u_{ref}^{\eta} \|_{L^2(\varepsilon_0,\varepsilon)} \leq C \eta,
\end{equation}
   by an application of the inverse mapping theorem  to the map
\begin{equation*}
\begin{array}{cccc}
\tilde{\Theta}: & L^2((\varepsilon_0,\varepsilon),\R) 
& \rightarrow & \R^N \times \dots \times \R^N
\\
& u & \mapsto & \left(  \tilde{\Theta}_1(u) , \dots, \tilde{\Theta}_{N-1}(u) \right)
\end{array}
\end{equation*}
at the point $u=0$, where
\begin{equation*}
\tilde{\Theta}_k(u) := 
\left(
\lag \mu \psi^j (\varepsilon_k) , \psi^j (\varepsilon_k) \rag - \lag \mu \varphi_j , \varphi_j \rag
\right)_{1 \leq j \leq N}.
\end{equation*}

\noindent
The $C^1$ regularity of $\tilde{\Theta}$ follows from the differentiability property in Proposition~\ref{P:2.1}. A continuous right-inverse of $\md \tilde{\Theta}(0)$ is constructed  by a resolution of a suitable trigonometric moment problem using Proposition~\ref{prop_pb_moment}.

\medskip
\textit{Second step :} For any $j \in \N^*$, let $\PP_{j}$ be the orthogonal projection defined by (\ref{def_proj}). We prove that for any initial condition at time $\varepsilon$ close enough to $\big( \Phi_{1} ,\dots, \Phi_{N} \big)(\varepsilon) $, the projections $\big( \PP_{1}(\psi^1(T)),\dots, \PP_{N}(\psi^N(T)) \big)$ can be brought to $0$ by a small   control $u \in L^2((\varepsilon,T),\R)$. This is sufficient to prove Proposition~\ref{prop_traj_ref}. Indeed, if
\begin{equation}
\label{traj_ref_proj_nulles}
\PP_{1} \big( \psi^{1,\eta}_{ref}(T) \big) = \dots = \PP_{N} \big( \psi^{N,\eta}_{ref}(T) \big) = 0,
\end{equation}
using the invariants (\ref{invariants}), it comes that there exist $\theta_1^\eta, \dots, \theta_N^\eta \in \R$ such that (\ref{traj_ref_etat_final}) holds.

\noindent
As in \cite[Proposition 3.2]{Morancey_simultane}, the condition (\ref{traj_ref_proj_nulles}) with a control satisfying 
\begin{equation} \label{traj_ref_borne_controle2}
\|u_{ref}^{\eta}\|_{L^2(\varepsilon,T)} \leq C \eta
\end{equation} 
is obtained by an application of the inverse mapping theorem to the map
\begin{equation*}
\Theta:    L^2((\varepsilon,T),\mathbb{R}) \times \bo H^3_{(0)}
  \rightarrow   \bo H^3_{(0)} \times \bo X,
\end{equation*}
at the point $\big(0, \Phi_{1}(\varepsilon), \dots, \Phi_{N}(\varepsilon) \big)$,  where
\begin{equation*}
\Theta \big( u, \bo \psi_0 \big) := 
\Big( \bo \psi_0, \, \PP_{1} \big( \psi^1(T) \big), \dots, \PP_{N} \big( \psi^N(T) \big) \Big)
\end{equation*}
and
\begin{equation}
\label{def_X1}
\bo X := \left\{ \bo \phi \in {\bo H}^3_{(0)} \, ; \, \lag \phi^j , \varphi_{k} \rag =0  \, \text{ for all } 1 \leq k \leq j \leq N \right\}.
\end{equation}
Again, the $C^1$ regularity of $\Theta$ is obtained thanks to    Proposition~\ref{P:2.1}. The continuous right-inverse of $\md \Theta \big(0, \Phi_{1}(\varepsilon), \dots, \Phi_{N}(\varepsilon) \big)$ is given by the resolution of a suitable trigonometric moment problem with frequencies
\begin{equation*}
\left\{ \lambda_k - \lambda_j \, ; \, j \in \{1, \dots, N\}, \, k \geq j+1 \right\}.
\end{equation*}
The solution of that moment problem is given by Proposition~\ref{prop_pb_moment}.

\end{proof}

%%%%%%%%%%%%%%%%%%%%%%%%%%%%%%%%%%%%%%%%%%%%%%%%%%%%%%%%%%%%%%%%%%%%%%%%%%%%%%%%%%%%%%%%%%%%%%%%%%%%%%%
\subsection{Controllability of the linearized system}
\label{subsect_control_linearise}
%%%%%%%%%%%%%%%%%%%%%%%%%%%%%%%%%%%%%%%%%%%%%%%%%%%%%%%%%%%%%%%%%%%%%%%%%%%%%%%%%%%%%%%%%%%%%%%%%%%%%%%

This section is dedicated to the proof of controllability of the following system which is the linearization of \eqref{syst_Neq}   around the reference trajectory  $\bo \psi^{\eta}_{ref}$: 
\begin{equation} \label{linearise_1D_traj_ref}
\left\{
\begin{aligned}
& i \partial_t \Psi^j = \left(-\partial^2_{xx} +V(x) \right) \Psi^j - u_{ref}^{\eta}(t) \mu(x) \Psi^j - v(t) \mu(x) \psi^{j,\eta}_{ref}, 
\\
& \Psi^j(t,0) = \Psi^j(t,1) = 0,  \quad\quad \quad\quad\quad\quad j \in \{1, \dots,N\},
\\
& \Psi^j(0,x) = \Psi^j_0(x). 
\end{aligned}
\right.
\end{equation}
For any $t\in [0,T]$, let us define the following space
\begin{align*}
\bo X_t :&= \Big\{ \bo \phi \in {\bo H}^3_{(0)} \, ; \, \Re (\lag \phi^j, \psi^{j,\eta}_{ref}(t) \rag) = 0  \text{ for } j=1,\dots,N 
\\ & \text{and } \lag \phi^j, \psi^{k,\eta}_{ref}(t) \rag = - \overline{\lag \phi^k ,\psi^{j,\eta}_{ref}(t)\rag} \text{ for } j=2,\dots,N, \, k<j  \Big\}.
\end{align*}
This space is given by the linearization of the invariants (\ref{invariants}) around the reference trajectory.

We prove the following controllability result.

\begin{proposition} \label{prop_controle_linearise_1D}
There exists $\hat{\eta} \in (0,\overline{\eta})$ such that for any $\eta \in (0,\hat{\eta})$, there exists a continuous linear map
\begin{equation*}
\begin{array}{cccc}
L^\eta: & \bo X_0 \times \bo X_T
& \rightarrow & L^2((0,T),\mathbb{R})
\\
& \big( \bo \Psi_0, \bo \Psi_f \big) & \mapsto & v
\end{array}
\end{equation*}
such that for any $\bo \Psi_0 \in \bo X_0$ and  $\bo \Psi_f \in \bo X_T$, the solution $\bo \Psi$ of system (\ref{linearise_1D_traj_ref}) with initial condition $\bo \Psi_0$ and control $v := L^\eta(\bo \Psi_0, \bo \Psi_f)$ satisfies $\bo \Psi(T) = \bo \Psi_f$.
\end{proposition}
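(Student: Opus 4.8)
The plan is to prove exact controllability of the linear nonautonomous system (\ref{linearise_1D_traj_ref}) by reducing it, through the Duhamel formula and projection onto the eigenbasis $\{\varphi_p\}$, to a trigonometric moment problem, and then to invoke Conditions $\mathrm{(C_3)}$, $\mathrm{(C_4)}$ together with the normalization (\ref{traj_ref_condition_minimalite}) and the moment-problem Proposition~\ref{prop_pb_moment}. First I would use linearity to reduce to the case $\bo\Psi_0=0$. Let $U^\eta(t,\tau)$ be the propagator (unitary on $L^2$, acting identically on each component) of the homogeneous equation $i\partial_t\Psi=(A_V-u^\eta_{ref}(t)\mu)\Psi$. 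Splitting the solution as $\bo\Psi=\bo\Psi^{(1)}+\bo\Psi^{(2)}$, where $\bo\Psi^{(1)}(t)=U^\eta(t,0)\bo\Psi_0$ is the free evolution of the initial data and $\bo\Psi^{(2)}$ carries the control with zero initial data, it suffices to solve $\bo\Psi^{(2)}(T)=\bo\Psi_f-\bo\Psi^{(1)}(T)$. Since the homogeneous flow preserves the linearized invariants defining $\bo X_t$, one has $\bo\Psi^{(1)}(T)\in\bo X_T$, so the shifted target stays in $\bo X_T$; because $\bo\Psi_0\mapsto\bo\Psi^{(1)}(T)$ is continuous, it is then enough to build a \emph{bounded} right inverse of the map $v\mapsto\bo\Psi^{(2)}(T)$ from $L^2((0,T),\R)$ onto $\bo X_T$, and set $L^\eta$ accordingly.

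Next I would write $\langle\Psi^{(2),j}(T),\varphi_p\rangle=i\int_0^T v(\tau)\,\langle U^\eta(T,\tau)\,\mu\,\psi^{j,\eta}_{ref}(\tau),\varphi_p\rangle\,d\tau$, so that each target coordinate becomes a moment $\int_0^T v(\tau)f^\eta_{j,p}(\tau)\,d\tau$ against an explicit function $f^\eta_{j,p}$. By (\ref{traj_ref_etat_final}) the reference states at time $T$ are $e^{i\theta^\eta_j}\varphi_j$, hence $\bo X_T$ is parametrized by the coefficients $\langle\Psi^j(T),\varphi_p\rangle$: those with $p\neq j$ are free (for $p,j\leq N$ subject only to the antisymmetry relation pairing $(j,p)$ with $(p,j)$, which is compatible with $v$ real), while the diagonal ones $\langle\Psi^j(T),\varphi_j\rangle$ are constrained to be imaginary up to the phase. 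When $\eta=0$ one has $U^\eta=e^{-iA_V\cdot}$ and $\psi^{j,\eta}_{ref}=\Phi_j$, so $f^0_{j,p}(\tau)=\overline{\langle\mu\varphi_j,\varphi_p\rangle}\,e^{i(\lambda_p-\lambda_j)\tau}$ up to a unimodular constant. For $p\neq j$ these are genuine exponentials whose frequencies $\lambda_p-\lambda_j$ are pairwise distinct by Condition $\mathrm{(C_4)}$ and whose weights are bounded below by $C k^{-3}$ via Condition $\mathrm{(C_3)}$; the associated (off–diagonal) moment problem is therefore solvable with continuous dependence of $v$ on the prescribed coefficients, in a way compatible with the $H^3$ regularity of the target, by Proposition~\ref{prop_pb_moment}.

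The crux is the diagonal block $p=j$, which is exactly where the obstruction (\ref{linearise_Neqs_NC}) sits: at $\eta=0$ the $N$ functions $f^0_{j,j}$ all carry the single frequency $0$ and are proportional to the constant $\overline{\langle\mu\varphi_j,\varphi_j\rangle}$, so one cannot prescribe the $N$ diagonal imaginary parts independently. This is precisely what the reference trajectory repairs. For $\eta>0$ the diagonal weight is replaced by the time function $\langle\mu\,\psi^{j,\eta}_{ref}(\tau),\psi^{j,\eta}_{ref}(\tau)\rangle$, and the normalization (\ref{traj_ref_condition_minimalite}), namely $\langle\mu\,\psi^{j,\eta}_{ref}(\varepsilon_k),\psi^{j,\eta}_{ref}(\varepsilon_k)\rangle=\langle\mu\varphi_j,\varphi_j\rangle+\eta\,\delta_{j=k}$, forces these $N$ functions to take, at the times $\varepsilon_0,\dots,\varepsilon_{N-1}$, values distinguished by the $\eta\,\delta_{j=k}$ term. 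Hence for every $\eta>0$ the diagonal family is linearly independent, and for $\eta$ small it remains minimal jointly with the off–diagonal family, since $\|u^\eta_{ref}\|_{L^2}=O(\eta)$ by (\ref{traj_ref_borne_controle}) makes $f^\eta_{j,p}$ an $O(\eta)$ perturbation of $f^0_{j,p}$, so a stability-of-minimality (Paley–Wiener/Bari) argument keeps the whole family a Riesz basis of its closed span with a uniformly bounded biorthogonal system.

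Choosing $\hat\eta$ small enough that both the perturbation estimate and this minimality survive, I would, for each fixed $\eta\in(0,\hat\eta)$, use the bounded biorthogonal family to define $v$ as a bounded linear function of the prescribed coordinates of $\bo\Psi_f-\bo\Psi^{(1)}(T)$, and then read off $L^\eta$; that $\bo\Psi(T)=\bo\Psi_f$ and that $L^\eta$ is continuous linear are then immediate. I expect the genuine difficulty to be entirely concentrated in this diagonal block: verifying that lifting the frequency-$0$ degeneracy through (\ref{traj_ref_condition_minimalite}) yields not merely independence but a minimal family with a biorthogonal system bounded uniformly enough to produce a bona fide continuous operator $L^\eta$ at a single small $\eta$, the value then feeding the inverse mapping argument of Section~\ref{subsect_control_nonlineaire}.
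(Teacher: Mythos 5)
Your proposal follows essentially the same route as the paper: reduce to a trigonometric moment problem via Duhamel, solve the off-diagonal block by perturbing the $\eta=0$ Riesz-basis isomorphism (using $\|u_{ref}^{\eta}\|_{L^2}\leq C\eta$, Conditions $\mathrm{(C_3)}$--$\mathrm{(C_4)}$ and Proposition~\ref{prop_pb_moment}), repair the degenerate diagonal block through the conditions (\ref{traj_ref_condition_minimalite}) yielding a minimal family with a biorthogonal system, and recover the sub-diagonal directions from the antisymmetry built into $\bo X_T$. The only cosmetic differences are that you reduce to $\bo \Psi_0=0$ via the propagator while the paper carries $\langle \Psi_0^j,\varphi_k\rangle$ inside the moment identities, and that the paper places the $(N,N)$ direction inside the main moment problem and adjoins only the $N-1$ remaining diagonal functions, which is the bookkeeping your argument would settle into once written out.
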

The proof of Proposition~\ref{prop_controle_linearise_1D} is adapted from~\cite[Proposition 4.1]{Morancey_simultane}. As the proof is quite long and technical, we recall the main steps and arguments.
Let us set some notations that will be used throughout this proof. For any~$\eta \in (0,\overline{\eta})$ and $k\in \N^*$, let $\Phi^\eta_k = \psi(\cdot,\varphi_k,u_{ref}^\eta)$ as defined by (\ref{def_propagateur}). Notice that for~$j \in \{1,\dots,N\}$, $\Phi^\eta_j = \psi^{j,\eta}_{ref}$ and for any $t \in [0,T]$, $\{\Phi^\eta_k(t)\}_{k\in \N^*}$ is a Hilbert basis of $L^2((0,1),\C)$, as an image of a Hilbert basis by a unitary operator. Let
\begin{equation*}
\II := \big\{ (j,k) \in \{1,\dots,N\} \times \N^* \, ; \, k \geq j+1 \big\} \cup \left\{ (N,N) \right\}.
\end{equation*}

In the  first step we prove  the controllability of the directions $\lag \Psi^j(T), \Phi^\eta_k(T)\rag$ for $(j,k) \in \II$ for $\eta$ small enough. This comes from the solvability of the trigonometric moment problem associated to the case $\eta=0$ and a close linear maps argument.
Then, we exhibit a minimal family that allows to control, simultaneously to the previous direction, the remaining diagonal directions $\lag \Psi^j(T) ,\Phi^\eta_j(T) \rag$ for $j \in \{1,\dots,N-1\}$. This is the main feature of the design of the reference trajectory. Indeed, we enlightened in (\ref{linearise_Neqs_NC}) that those diagonal directions were the ones leading to non controllability of the linearized system in the case $\eta=0$.
Finally, due to the definition of $\bo X_T$, the remaining directions $\lag \Psi^j(T), \Phi^\eta_k(T)\rag$ for $1 \leq k < j$ are automatically controlled.
\medskip

\begin{proof}[Sketch of the  proof of Proposition~\ref{prop_controle_linearise_1D}.]
Let $R : \II \to \N$ be the rearrangement such that, if $\omega_{n} := \lambda_{k}- \lambda_{j}$ with $n=R(j,k)$, the sequence $(\omega_{n})_{n \in \N}$ is increasing. Notice that $0=R(N,N)$.

\textit{First step.} Let us take any  $T_f \in (0,T]$ and prove that there is $\hat{\eta}=\hat{\eta}(T_f) \in (0,\overline{\eta})$ such that for any $\eta \in (0,\hat{\eta})$ there exists a continuous linear map
\begin{equation*}
G^\eta_{T_f}:  \bo X_0 \times \ell^2_r(\N,\C) \rightarrow  L^2((0,T_f),\mathbb{R})
\end{equation*}
such that for any $\bo \Psi_0 \in \bo X_0$, $d=(d_n)_{n\in \Z} \in \ell^2_r(\N,\C)$, the solution $\bo \Psi$ of system (\ref{linearise_1D_traj_ref}) with initial condition $\bo \Psi_0$ and control $v=G^\eta_{T_f}(\bo \Psi_0, d)$ satisfies
\begin{equation*}
\frac{\lag \Psi^j(T_f), \Phi^\eta_k(T_f) \rag}{i \lag \mu \varphi_{j} , \varphi_{k} \rag} = d_n, \quad \forall (j,k) \in \II, \, n=R(j,k).
\end{equation*}
Let
\begin{equation*}
f^\eta_n :t \in [0,T] \mapsto \frac{\lag \mu \psi^{j,\eta}_{ref}(t) , \Phi^\eta_k(t) \rag}{\lag \mu \varphi_{j}, \varphi_{k} \rag}  \quad \text{ for } (j,k) \in \II \text{ and } n=R(j,k),
\end{equation*}
$f^\eta_{-n} := \overline{f^\eta_n}$ for $n \in \N^*$ and $H_0 := \text{Adh}_{L^2(0,T_f)}\big( \text{Span} \{ e^{i \omega_{n} \cdot} , n \in \Z\} \big)$.
As in \cite[Lemma 4.1]{Morancey_simultane}, the construction of $G^\eta_{T_f}$ relies on the fact that the map
\begin{equation*}
\begin{array}{cccc}
J^\eta: &  L^2((0,T_f),\C)
& \rightarrow & \ell^2(\Z,\C)
\\
& v & \mapsto &  \left( \int_0^{T_f} v(t) f^\eta_n(t) \md t \right)_{n \in \Z}
\end{array} 
\end{equation*}
is an isomorphism from $H_0$ to $\ell^2(\Z,\C)$.
Indeed, for any $(j,k) \in \II$ and $n=R(j,k)$, straightforward computations lead to
\begin{equation*}
\lag \Psi^j(T_f), \Phi^\eta_k(T_f) \rag = \lag \Psi^j_0,\varphi_k \rag + i \lag \mu \varphi_{j}, \varphi_{k} \rag \int_0^{T_f} v(t) f^\eta_n(t) \md t.
\end{equation*}
The isomorphism property of $J^\eta$ comes from the estimate 
\begin{equation*}
\| J^\eta - J^0 \|_{\mathcal{L}(L^2(0,T_f),\ell^2)} \leq C \|u_{ref}^{\eta}\|_{L^2(0,T_f)} \leq C \eta,
\end{equation*}
(see \cite[Proof of Lemma 4.1]{Morancey_simultane} for the proof of this estimate) and the fact that, due to Proposition~\ref{prop_pb_moment}, $J^0$ is an isomorphism from $H_0$ to $\ell^2(\Z,\C)$.
\medskip

\textit{Second step.} Let $\hat{\eta} < \min(\hat{\eta}(T),\hat{\eta}(\varepsilon_0))$ with $\varepsilon_0$ as in Proposition~\ref{prop_traj_ref}. In all what follows we assume $\eta \in (0,\hat{\eta})$. Let 
\begin{equation} \label{def_fjj}
f^\eta_{j,j} : t\in [0,T] \mapsto \dfrac{\lag \mu \psi^{j,\eta}_{ref}(t), \psi^{j,\eta}_{ref}(t)\rag}{\lag \mu \varphi_{j}, \varphi_{j} \rag}  \quad \text{ for } j \in \{1, \dots, N-1 \}.
\end{equation} 
Then, the family $\Xi := (f^\eta_n)_{n \in \Z} \cup \{ f^\eta_{1,1} , \dots, f^\eta_{N-1,N-1} \}$ is minimal in $L^2((0,T),\C)$.
The proof of this is a straightforward extension of~\cite[Lemma 4.3]{Morancey_simultane} and is not detailed. It relies on the fact that  $(f^{\eta}_n)_{n \in \Z}$ is a Riesz basis of $\text{Adh}_{L^2(0,T)}\big( \text{Span} \{ f^\eta_n  , \linebreak[1] n \in \Z \} \big)$ and conditions (\ref{traj_ref_condition_minimalite}).
\medskip

\textit{Third step :} conclusion. From the second step, we get the existence of a biorthogonal family associated to $\Xi$ in $\text{Adh}_{L^2(0,T)}\big( \text{Span}\{\Xi\} \big)$ denoted by
\begin{equation} \label{def_famille_biorthogonale}
\big\{ g^\eta_{1,1}, \dots, g^\eta_{N-1,N-1}, (g^\eta_n)_{n \in \Z} \big\},  
\end{equation}
with $g^\eta_{j,j}$ being real-valued for $j \in \{1, \dots, N\}$.
The map $L^\eta$ is defined by 
\begin{equation*}
L^\eta : \big( \bo \Psi_0, \bo \Psi_f \big) \in \bo X_0 \times \bo X_T \mapsto v \in L^2((0,T),\R),
\end{equation*}
where 
\begin{equation*}
v := v_0 + \sum_{j=1}^{N-1} \Big( \frac{\Im(\lag \Psi^j_f , \psi^{j,\eta}_{ref}(T) \rag) - \Im(\lag \Psi^j_0, \varphi_j \rag)}{\lag \mu \varphi_j , \varphi_j \rag} - \int_0^T v_0(t) f^\eta_{j,j}(t) \md t \Big) g^\eta_{j,j}, 
\end{equation*}
and $v_0 := G^\eta_T(\bo \Psi_0, d(\bo \Psi_f))$ with 
$d(\bo \Psi_f)_n := \dfrac{\lag \Psi^j_f,\Phi^\eta_k(T)\rag}{i\lag \mu \varphi_{j}, \varphi_{k} \rag}$, for $(j,k) \in \II$ and $n=R(j,k)$. The biorthogonality properties and the first step imply
\begin{equation*}
\lag \Psi^j(T), \Phi^\eta_k(T) \rag = \lag \Psi^j_f, \Phi^\eta_k(T) \rag, \quad \forall (j,k) \in \II \cup \{ (1,1) , \dots, (N-1,N-1) \}.
\end{equation*}
Finally, for $j \in \{ 2, \dots, N \}$ and $k<j$ explicit computations lead to
\begin{equation*}
\lag \Psi^j(T), \psi^{k,\eta}_{ref}(T) \rag  = - \overline{\lag \Psi^k(T), \psi^{j,\eta}_{ref}(T) \rag}.
\end{equation*}
As $\bo \Psi_f \in \bo X_T$, this ends the proof of Proposition~\ref{prop_controle_linearise_1D}.

\end{proof}

%%%%%%%%%%%%%%%%%%%%%%%%%%%%%%%%%%%%%%%%%%%%%%%%%%%%%%%%%%%%%%%%%%%%%%%%%%%%%%%%%%%%%%%%%%%%%%%%%%%%%%%
\subsection{Controllability of the nonlinear system}
\label{subsect_control_nonlineaire}
%%%%%%%%%%%%%%%%%%%%%%%%%%%%%%%%%%%%%%%%%%%%%%%%%%%%%%%%%%%%%%%%%%%%%%%%%%%%%%%%%%%%%%%%%%%%%%%%%%%%%%%

In this subsection, we end the proof of Proposition~\ref{prop_controle_traj_ref}. We consider the reference trajectory designed in Proposition~\ref{prop_traj_ref}. Let $\hat{\eta}$ be given by Proposition~\ref{prop_controle_linearise_1D}. We assume in all what follows that $\eta \in (0,\hat{\eta})$ is fixed. Using the inverse mapping theorem and Proposition~\ref{prop_controle_linearise_1D}, we prove in Proposition~\ref{prop_controle_NL_1D} that the projections  onto the space $\bo X_T$ (see (\ref{def_projection_XfT}) for a precise definition) are exactly controlled. Then, using the invariants (\ref{invariants}) of the system, we prove that controlling these projections is sufficient to control the full trajectory. Let us set
\begin{equation} \label{def_Omega}
\bo \Omega := \big\{ \bo \phi \in \bo H^3_{(0)} \, ; \, \lag \phi^j , \phi^k \rag = \delta_{j=k}, \; \forall j,k \in \{1, \dots, N\} \big\}
\end{equation}
and  define
\begin{equation*}
\begin{array}{cccc}
\Lambda : &   \bo \Omega \times L^2((0,T),\R)
& \rightarrow & \bo \Omega \times \bo X_T 
\\
&(\bo \psi_0, u) & \mapsto & \big(\bo \psi_0 , \, \tilde{\PP}_1(\psi^1(T)), \dots, \tilde{\PP}_N(\psi^N(T)) \big),
\end{array}
\end{equation*}
where $\bo \psi := \bo\psi(\cdot, \bo \psi_0,u)$ and
\begin{align*}
\tilde{\PP}_j(\phi^j) :&= \phi^j - \Re \big( \lag \phi^j, \psi^{j,\eta}_{ref}(T) \rag \big) \, \psi^{j,\eta}_{ref}(T) \,
\\
- &\, \sum_{k=1}^{j-1} \big( \lag \phi^j , \psi^{k,\eta}_{ref}(T) \rag + \lag \psi^{j,\eta}_{ref}(T),\phi^k \rag \big) \, \psi^k_{ref}(T).
\label{def_projection_XfT}
\tag{\theequation} \addtocounter{equation}{1}
\end{align*}Thus, $\Lambda$ takes value in $\bo \Omega \times \bo X_T$ and $\Lambda(\bo\varphi,u_{ref}^{\eta})= (\bo\varphi,0)$. 
The following proposition holds.

\begin{proposition} \label{prop_controle_NL_1D}
There exist $\tilde{\delta}>0$ and a $C^1$ map 
\begin{equation*}
\Upsilon : \OO_{\tilde{\delta}}^0 \times \tilde{\OO}_{T,\tilde{\delta}} \to L^2((0,T),\R),
\end{equation*}
where $\OO_{\tilde{\delta}}^0$ is defined in Proposition~\ref{prop_controle_traj_ref} and
\begin{equation*}
\tilde{\OO}_{T,\tilde{\delta}} := \Big\{ \tilde{\bo \psi}_f \in \bo X_T \, ; \,
\sum_{j=1}^N \| \tilde{\psi}^j_f \|_{H^3_{(V)}} < \tilde{\delta} \Big\},
\end{equation*}
such that $\Upsilon \big(\bo \varphi, \bo 0 \big) = u_{ref}^{\eta}$ and for any $\bo \psi_0 \in \OO_{\tilde{\delta}}^0$ and  $\tilde{\bo \psi}_f  \in \tilde{\OO}_{T,\tilde{\delta}}$ the solution $\bo \psi$ of system (\ref{syst_Neq}) with initial condition $\bo \psi_0$ and control $u= \Upsilon \big(\bo \psi_0,\tilde{\bo \psi}_f \big)$ satisfies
\begin{equation*}
\big( \tilde{\PP}_1(\psi^1(T)), \dots, \tilde{\PP}_N(\psi^N(T)) \big) = \tilde{\bo \psi}_f.
\end{equation*}
\end{proposition}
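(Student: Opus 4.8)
The plan is to apply the inverse mapping theorem to the map $\Lambda$ at the base point $(\bo\varphi, u_{ref}^\eta)$, which satisfies $\Lambda(\bo\varphi, u_{ref}^\eta) = (\bo\varphi, 0)$ by construction. Since $\Lambda$ is the composition of the resolving operator for \eqref{syst_Neq} with the continuous projection $(\tilde{\PP}_1,\dots,\tilde{\PP}_N)$, the $C^1$ regularity of $\Lambda$ follows directly from the differentiability property in Proposition~\ref{P:2.1}. The crux is therefore to show that the differential $\md\Lambda(\bo\varphi, u_{ref}^\eta)$ is surjective (indeed, admits a continuous right inverse) onto the tangent space $\bo X_0 \times \bo X_T$.

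First I would compute this differential. Since $\Lambda$ fixes the initial datum in its first component, the first component of $\md\Lambda$ is simply the identity on $\bo\Omega$ (whose tangent space at $\bo\varphi$ is $\bo X_0$). For the second component, differentiating with respect to $u$ at fixed $\bo\psi_0 = \bo\varphi$ yields $\partial_u\psi^j(T,\bo\varphi,u_{ref}^\eta)v = \Psi^j(T)$, where $\bo\Psi$ solves precisely the linearized system \eqref{linearise_1D_traj_ref} with $\bo\Psi_0$ arising from the perturbation of the initial datum. The key point is that the linearization of the projection $\tilde{\PP}_j$ around $\psi^{j,\eta}_{ref}(T)$ reproduces exactly the constraints defining $\bo X_T$, so that $\md\Lambda(\bo\varphi, u_{ref}^\eta)$ acting on $(\bo\Psi_0, v)$ has second component equal to the projection onto $\bo X_T$ of the solution $\bo\Psi(T)$ of \eqref{linearise_1D_traj_ref}.

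The surjectivity then reduces to the controllability of the linearized system, which is exactly Proposition~\ref{prop_controle_linearise_1D}. Given any target $(\bo\Psi_0, \tilde{\bo\psi}_f) \in \bo X_0 \times \bo X_T$, I would first account for the initial perturbation $\bo\Psi_0$, then invoke the continuous linear map $L^\eta$ of Proposition~\ref{prop_controle_linearise_1D} to produce a control $v = L^\eta(\bo\Psi_0, \tilde{\bo\psi}_f)$ steering $\bo\Psi_0$ to $\tilde{\bo\psi}_f$ inside $\bo X_T$. Composing with the identity on the first factor exhibits a continuous right inverse of $\md\Lambda(\bo\varphi, u_{ref}^\eta)$. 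With $\eta \in (0,\hat\eta)$ fixed as granted, this right inverse is well defined.

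The inverse mapping theorem then furnishes $\tilde\delta > 0$ and a $C^1$ map $\Upsilon$, local inverse of $\Lambda$ near $(\bo\varphi, 0)$, defined on a neighbourhood of $(\bo\varphi, 0)$ that we may shrink to $\OO_{\tilde\delta}^0 \times \tilde\OO_{T,\tilde\delta}$, satisfying $\Upsilon(\bo\varphi, \bo 0) = u_{ref}^\eta$; for $(\bo\psi_0, \tilde{\bo\psi}_f)$ in this neighbourhood, setting $u = \Upsilon(\bo\psi_0, \tilde{\bo\psi}_f)$ gives $\Lambda(\bo\psi_0, u) = (\bo\psi_0, \tilde{\bo\psi}_f)$, which is the asserted conclusion $(\tilde{\PP}_1(\psi^1(T)),\dots,\tilde{\PP}_N(\psi^N(T))) = \tilde{\bo\psi}_f$. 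I expect the main obstacle to be the bookkeeping in the second paragraph: verifying that the linearization of the nonlinear projections $\tilde{\PP}_j$ matches the defining constraints of $\bo X_T$, and correctly splitting the contributions of the state perturbation $\bo\Psi_0$ from those of the control perturbation $v$, so that Proposition~\ref{prop_controle_linearise_1D} applies verbatim.
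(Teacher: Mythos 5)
Your proposal is correct and follows essentially the same route as the paper: an application of the inverse mapping theorem to $\Lambda$ at $(\bo \varphi, u_{ref}^{\eta})$, with the $C^1$ regularity coming from the differentiability property of Proposition~\ref{P:2.1} and the continuous right inverse of $\md \Lambda(\bo\varphi, u_{ref}^{\eta})$ supplied by the linearized controllability result of Proposition~\ref{prop_controle_linearise_1D}. Your additional bookkeeping --- identifying the first component of the differential with the identity on $\bo X_0$ and the second with the $\bo X_T$-valued linearization of the projections $\tilde{\PP}_j$ composed with the solution of (\ref{linearise_1D_traj_ref}) --- is exactly what the paper leaves implicit in its sketch.
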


\begin{proof}[Sketch of proof.]
As~\cite[Proposition 4.2]{Morancey_simultane}, this proposition is proved by an  application of the inverse mapping theorem to the map $\Lambda$ at the point $(\bo \varphi,u_{ref}^{\eta})$. This map is $C^1$ by Proposition~\ref{prop_pb_moment}, and a continuous right inverse of the map
\begin{equation*}
\md \Lambda\big(\bo\varphi, u_{ref}^{\eta} \big): \bo X_0 \times L^2((0,T),\R) \to \bo X_0 \times \bo X_T
\end{equation*}
is given by Proposition~\ref{prop_controle_linearise_1D}.

\end{proof}
Finally, we prove Proposition~\ref{prop_controle_traj_ref}.
\begin{proof}[Proof of Proposition~\ref{prop_controle_traj_ref}.]
Let  us take any $\bo\psi_0 \in \OO_{\delta}^0$ and $\bo \psi_f \in \OO_{\delta}^f$, where the sets $\OO_{\delta}^0$ and~$  \OO_{\delta}^f$ are defined    in Proposition~\ref{prop_controle_traj_ref} and $\delta >0$ will be specified later on.
Let $\tilde{\delta}$ be the constant in Proposition~\ref{prop_controle_NL_1D} and
\begin{equation*}
\tilde{\bo \psi}_f := \big( \tilde{\PP}_1(\psi^1_f), \dots, \tilde{\PP}_N(\psi^N_f) \big).
\end{equation*}
  For sufficiently small   $\delta \in (0,\tilde{\delta})$, we have $\tilde{\bo \psi}_f \in \tilde{\OO}_{T,\tilde{\delta}}$    and
\begin{equation} \label{Re_cible>0}
\Re (\lag \psi^j_f , \psi^{j,\eta}_{ref}(T) \rag) > 0, \quad \forall j \in \{1,\dots,N\},
\end{equation} 
for any $\bo \psi_f \in \OO_{\delta}^f$. 
Let $u:=\Upsilon\big(\bo \psi_0,\tilde{\bo \psi}_f \big)$ and let $\bo \psi$ be the associated solution of (\ref{syst_Neq}) with initial condition~$\bo \psi_0$. We prove that (up to an a priori reduction of~$\delta$)
\begin{equation} \label{final=cible}
\bo \psi (T) = \bo \psi_f.
\end{equation}
Thanks to the regularity of $\Upsilon$ and Proposition~\ref{P:2.1}, it comes that, up to a reduction of $\delta$, one can assume that 
\begin{equation} \label{Re_final>0}
\Re(\lag \psi^j(T), \psi^{j,\eta}_{ref}(T) \rag) >0, \quad \forall j \in \{1, \dots, N\}.
\end{equation}
By Proposition~\ref{prop_controle_NL_1D}, we get
\begin{equation*}
\psi^1(T) - \Re(\lag \psi^1(T),\psi^{1,\eta}_{ref}(T) \rag) \psi^{1,\eta}_{ref}(T) =
\psi^1_f - \Re(\lag \psi^1_f, \psi^{1,\eta}_{ref}(T) \rag) \psi^{1,\eta}_{ref}(T).
\end{equation*}
Thus, using the fact that $\|\psi^1(T)\| = \|\psi^1_f\| $ and (\ref{Re_cible>0}), (\ref{Re_final>0}), we get $\psi^1(T) = \psi^1_f$. Assume that 
\begin{equation*}
\big( \psi^1 ,\dots, \psi^{j-1} \big) (T) = \big( \psi^1_f, \dots, \psi^{j-1}_f \big)\quad\text{for } j \in \{2, \dots,N\}.
\end{equation*}
Then the equality $\tilde{\PP}_j(\psi^j(T))= \tilde{\psi}^j_f$ gives
\begin{align*}
&\psi^j(T) - \Re(\lag \psi^j(T), \psi^{j,\eta}_{ref}(T) \rag) \psi^{j,\eta}_{ref}(T) - \sum_{k=1}^{j-1} \lag \psi^j(T), \psi^{k,\eta}_{ref}(T) \rag \psi^{k,\eta}_{ref}(T)
\\
&= \psi^j_f - \Re(\lag \psi^j_f, \psi^{j,\eta}_{ref}(T) \rag) \psi^{j,\eta}_{ref}(T) - \sum_{k=1}^{j-1} \lag \psi^j_f, \psi^{k,\eta}_{ref}(T) \rag \psi^{k,\eta}_{ref}(T).
\label{egalites_projections_1D}
\tag{\theequation} \addtocounter{equation}{1}
\end{align*}
Taking the scalar product of (\ref{egalites_projections_1D}) with $\psi^n(T) (= \psi^n_f)$ for $n \in \{1, \dots, j-1\}$ and using the constraints $\lag \psi^j(T), \psi^k(T) \rag = \lag \psi^j_f, \psi^k_f \rag = \delta_{j=k}$, we get  
\begin{align*}
&\Re(\lag \psi^j(T), \psi^{j,\eta}_{ref}(T) \rag) \, \lag \psi^{j,\eta}_{ref}(T), \psi^n_f \rag 
+ \sum_{k=1}^{j-1} \lag \psi^j(T), \psi^{k,\eta}_{ref}(T) \rag \, \lag \psi^{k,\eta}_{ref}(T), \psi^n_f \rag
\\
&=\Re(\lag \psi^j_f, \psi^{j,\eta}_{ref}(T) \rag) \, \lag \psi^{j,\eta}_{ref}(T), \psi^n_f \rag 
+ \sum_{k=1}^{j-1} \lag \psi^j_f, \psi^{k,\eta}_{ref}(T) \rag \, \lag \psi^{k,\eta}_{ref}(T), \psi^n_f \rag.
\end{align*}
Straightforward algebraic manipulations of these equations lead to the existence of $\gamma_1, \dots, \gamma_{j-1} \in \C$ that are proved to be arbitrarily small (up to an a priori reduction of $\delta$) such that for $k \in \{1, \dots, j-1\}$
\begin{align*} 
\lag \psi^j(T), \psi^{k,\eta}_{ref}(T) \rag &= \lag \psi^j_f, \psi^{k,\eta}_{ref}(T) \rag 
\\
&+ \gamma_k \Big( \Re(\lag \psi^j(T), \psi^{j,\eta}_{ref}(T) \rag) - \Re(\lag \psi^j_f, \psi^{j,\eta}_{ref}(T) \rag) \Big).
\label{controle_NL_1D}
\tag{\theequation} \addtocounter{equation}{1}
\end{align*}
If the $\gamma_j$'s are small enough this is consistent with $\| \psi^j(T) \|  = \| \psi^j_f \| $ only if 
\begin{equation*}
\Re(\lag \psi^j(T), \psi^{j,\eta}_{ref}(T) \rag) = \Re(\lag \psi^j_f, \psi^{j,\eta}_{ref}(T) \rag).
\end{equation*}
Together with (\ref{controle_NL_1D}), this implies $\psi^j(T) = \psi^j_f$ and ends the proof of Proposition~\ref{prop_controle_traj_ref}.

\end{proof}

%%%%%%%%%%%%%%%%%%%%%%%%%%%%%%%%%%%%%%%%%%%%%%%%%%%%%%%%%%%%%%%%%%%%%%%%%%%%%%%%%%%%%%%%%%%%%%%%%%%%%%%
%%%%%%%%%%%%%%%%%%%%%%%%%%%%%%%%%%%%%%%%%%%%%%%%%%%%%%%%%%%%%%%%%%%%%%%%%%%%%%%%%%%%%%%%%%%%%%%%%%%%%%%
\section{Global exact controllability}
\label{sect_control_global}
%%%%%%%%%%%%%%%%%%%%%%%%%%%%%%%%%%%%%%%%%%%%%%%%%%%%%%%%%%%%%%%%%%%%%%%%%%%%%%%%%%%%%%%%%%%%%%%%%%%%%%%
%%%%%%%%%%%%%%%%%%%%%%%%%%%%%%%%%%%%%%%%%%%%%%%%%%%%%%%%%%%%%%%%%%%%%%%%%%%%%%%%%%%%%%%%%%%%%%%%%%%%%%%

%%%%%%%%%%%%%%%%%%%%%%%%%%%%%%%%%%%%%%%%%%%%%%%%%%%%%%%%%%%%%%%%%%%%%%%%%%%%%%%%%%%%%%%%%%%%%%%%%%%%%%%
\subsection{Global exact controllability under favourable hypothesis}
%%%%%%%%%%%%%%%%%%%%%%%%%%%%%%%%%%%%%%%%%%%%%%%%%%%%%%%%%%%%%%%%%%%%%%%%%%%%%%%%%%%%%%%%%%%%%%%%%%%%%%%

In this section, combining the properties of approximate controllability  proved in Theorem~\ref{Approx_contr} and   local exact controllability proved in Theorem~\ref{th_controle_exact_local}, we establish global exact controllability for \eqref{syst_Neq}, under the following hypotheses on the functions $V, \mu \in H^4((0,1),\R)$
 \begin{description}
\item[$  \boldsymbol{\mathrm{(C_6)}}$]  For any $j\in \N^*$, there exists $C_j > 0$ such that
 $$|\lag \mu \varphi_{j,V}, \varphi_{k,V} \rag| \ge \frac{C_j }{k^{3}}\quad\text{for all $k \in \N^*$.}$$  
 \item[$  \boldsymbol{\mathrm{(C_7)}}$]  The numbers $\{1,\la_{j,V}\}_{j\in \N^*}$ are rationally independent, i.e., for any $M\in \N^*$ and $\boldsymbol r\in \Q^{M+1}\backslash\{\boldsymbol 0\}$,  we have 
 $$
r_0 + \sum_{j=1}^M r_j \la_{j,V}\neq 0.
 $$
\end{description}
Notice that these conditions imply Conditions   $\mathrm{(C_1)}- \mathrm{(C_5)}$. 
 
 \begin{theorem}\label{T:4.1}
 Assume that Conditions~$\mathrm{(C_6)}$ and~$\mathrm{(C_7)}$ are satisfied for the functions $V,\mu\in H^4((0,1),\R)$.  Then, for any unitarily equivalent vectors $\bp_0,\bp_f\in\boldsymbol \SS\cap \boldsymbol H^{4}_{(V)} $,    there is a time $T>0$ and a control $u\in L^2((0,T),\R)$ such that the solution of  (\ref{syst_Neq}) satisfies 
 $$
 \bp(T)= \bp_f.
 $$ 
 \end{theorem}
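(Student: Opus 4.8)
The plan is to combine the global approximate controllability of Theorem~\ref{Approx_contr} with the local exact controllability of Theorem~\ref{th_controle_exact_local}, after first reducing the general (possibly non-orthonormal) data to orthonormal frames. Since $\mathrm{(C_6)}$ and $\mathrm{(C_7)}$ hold for \emph{every} index, they yield Conditions $\mathrm{(C_1)}$--$\mathrm{(C_5)}$ for an arbitrary number of particles, so both theorems are available with $N$ replaced by any integer. Write $G=(\lag\psi_0^j,\psi_0^k\rag)_{j,k}$ for the Gram matrix of $\bp_0$; as the flow of~(\ref{syst_Neq}) preserves all scalar products and $\bp_f=\UU\bp_0$, the target $\bp_f$ has the same Gram matrix. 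Choosing an orthonormal basis $\bo e_0=(e_0^1,\dots,e_0^n)$ of $\mathrm{Span}\{\psi_0^1,\dots,\psi_0^N\}$ with $n=\mathrm{rank}\,G$, one has $\bp_0=B\bo e_0$ for a constant $N\times n$ matrix $B$, and then $\bp_f=\UU B\bo e_0=B(\UU\bo e_0)=B\bo e_f$ with $\bo e_f:=\UU\bo e_0$ orthonormal and unitarily equivalent to $\bo e_0$. Because the resolving operator of~(\ref{syst_Neq}) commutes with the constant matrix $B$ (the system being linear in the state, cf.~(\ref{E:com})), any control steering $\bo e_0$ to $\bo e_f$ also steers $\bp_0=B\bo e_0$ to $B\bo e_f=\bp_f$. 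Thus it suffices to treat orthonormal frames.

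For orthonormal frames I would prove that every orthonormal $n$-frame $\bo e_0\in\boldsymbol\SS\cap\boldsymbol H^4_{(V)}$ can be steered exactly to the reference frame $\bo\varphi:=(\varphi_1,\dots,\varphi_n)$; applying this to $\bo e_0$, and to $\overline{\bo e_f}$ combined with the time reversibility of Proposition~\ref{P:2.1} (the $\varphi_k$ being real), yields steerings $\bo e_0\to\bo\varphi$ and $\bo\varphi\to\bo e_f$, whose concatenation gives $\bo e_0\to\bo e_f$. To reach $\bo\varphi$, first apply Theorem~\ref{Approx_contr}: after a preliminary short smooth control bringing $\bo e_0$ into $\Be$ (possible thanks to $\mathrm{(C_6)}$, and preserving $\boldsymbol H^4_{(V)}$ by the regularity property of Proposition~\ref{P:2.1}), there exist $M\in\N^*$, a finite sum $\bo e\in\boldsymbol\CC_M$ and controls driving $\bo e_0$ arbitrarily $\boldsymbol H^3$-close to $\bo e$. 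Since orthonormality is preserved along the flow and in the $\boldsymbol H^3$-limit, $\bo e$ is itself an orthonormal $n$-frame lying in $\mathrm{Span}\{\varphi_1,\dots,\varphi_M\}$.

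The main obstacle is the mismatch between what approximate controllability delivers, namely closeness to a finite sum $\bo e\in\boldsymbol\CC_M$ with possibly $M>n$, and what Theorem~\ref{th_controle_exact_local} requires, namely closeness to a unitary frame $C\bo\varphi$, $C\in U_n$, of the \emph{first $n$} eigenstates. I would bridge this gap by an extension of the commuting argument. Write $\bo e=B'\bo\varphi^{(M)}$ with $\bo\varphi^{(M)}:=(\varphi_1,\dots,\varphi_M)$ and $B'$ the $n\times M$ coefficient matrix; orthonormality of $\bo e$ means $B'(B')^*=I_n$, so $B'$ is a co-isometry. Applying Theorem~\ref{th_controle_exact_local} with $M$ particles, together with the connectedness of $U_M$ used in its third step, furnishes a control $w$ steering $\bo\varphi^{(M)}$ to $\tilde C\bo\varphi^{(M)}$ for any prescribed $\tilde C\in U_M$. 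Choosing $\tilde C$ so that $B'\tilde C=[\,I_n\,|\,0\,]$ (a rotation carrying the $n$-dimensional row space of $B'$ onto the first $n$ coordinates) and using again that the flow commutes with the constant matrix $B'$, the same $w$ steers $\bo e=B'\bo\varphi^{(M)}$ exactly to $B'\tilde C\bo\varphi^{(M)}=\bo\varphi$. By the $\boldsymbol H^3$-continuity of the resolving operator (Proposition~\ref{P:2.1}), applying $w$ to the state reached near $\bo e$ lands near $\bo\varphi$, and this state is again orthonormal.

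It then remains to finish exactly. Being orthonormal and $\boldsymbol H^3$-close to $\bo\varphi$, the reached state lies in the neighbourhood $\OO_{\delta,I_n}$ of Theorem~\ref{th_controle_exact_local} (taken with $C_0=C_f=I_n$), which provides a control hitting $\bo\varphi$ exactly. Concatenating the preliminary, approximate, rotating and local-exact controls gives an exact steering $\bo e_0\to\bo\varphi$; time reversibility supplies $\bo\varphi\to\bo e_f$, and the reduction through $B$ returns the desired $\bp_0\to\bp_f$. The delicate points are the preliminary entry into $\Be$ and, above all, the co-isometry/commuting step that converts an uncontrolled finite sum into a frame of the first $n$ eigenstates; everything else is continuity of the flow and bookkeeping of concatenated controls and times.
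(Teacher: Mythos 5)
Your overall architecture is sound and uses the same essential ingredients as the paper: approximate controllability towards $\boldsymbol\CC_M$ (Theorem~\ref{Approx_contr}), local exact controllability for $M$ particles (Theorem~\ref{th_controle_exact_local}), commutation of the resolving operator with constant matrices, and time reversibility. Your organization differs in two places, both legitimate. First, you reduce to orthonormal frames via the Gram-matrix factorization $\bp_0=B\bo e_0$, whereas the paper works directly with the $N$-tuples and instead proves Lemma~\ref{lemme_propagateur_H3} to produce a unitary $\UU_\phi$ close to the identity on $\varphi_1,\dots,\varphi_M$. Second, your co-isometry step (choosing $\tilde C\in U_M$ with $B'\tilde C=[\,I_n\,|\,0\,]$ and applying the $M$-particle control to the $n$-frame) is a clean substitute for the paper's first and second steps; it lets you meet in the middle at $\bo\varphi^{(n)}$ rather than connecting two neighbourhoods inside $\boldsymbol\CC_M$ by a separate connectedness-plus-compactness argument. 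The quantifier order ($M$, $\bo e$ and $w$ fixed before the approximation parameter, the constant of Proposition~\ref{P:2.1} for the fixed $w$, then $\delta$ from the local result) is consistent, and the verification that $\bo e$ is again an orthonormal frame is correct.

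The genuine gap is your treatment of the constraint $\bo e_0\in\Be$. Theorem~\ref{Approx_contr} only applies to initial data with $\prod_j\lag\psi^j_0,\varphi_j\rag\neq 0$, and an arbitrary orthonormal frame need not satisfy this (take $e_0^1=\varphi_2$, $e_0^2=\varphi_1$). You dispose of this in a parenthesis, asserting that a ``preliminary short smooth control'' enters $\Be$ ``thanks to $\mathrm{(C_6)}$''. This is not justified: the free evolution preserves each $|\lag\psi^j(t),\varphi_j\rag|$, so some genuine control argument is required, and a first-order perturbation of the control only moves $\lag\psi^j(T),\varphi_j\rag$ off zero if one proves that the relevant linear functionals of the control are nontrivial and can be made simultaneously nonzero --- a moment-problem argument you would have to supply, together with the preservation of $\boldsymbol H^4_{(V)}$. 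The paper avoids this entirely with its fourth step: it first establishes the result for data in $\Be$, then for general $\bp_0$ picks $\bp_{0n}\in\Be$ with $\bp_{0n}\to\bp_0$ in $\boldsymbol L^2$, steers $\bp_{0n}$ exactly to a fixed target $\bp_f\in\Be$, and uses the $L^2$-isometry of the flow to get $\|\bp(T_n,\bp_0,u_n)-\bp_f\|=\|\bp_0-\bp_{0n}\|\to 0$, so that $\bp(T_n,\bp_0,u_n)\in\Be$ for $n$ large ($\Be$ being $L^2$-open); a final smooth perturbation of the control keeps the state in $\boldsymbol H^4_{(V)}$. You should either import this argument or prove your preliminary-control claim; as written, the step is missing.
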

\begin{proof} 
In this proof, we   use vectors of different size. In bold characters we   denote only the vectors of size $N$.  

\smallskip
\textit{First step}. Let us take any $M\in \N^*$ and $\bo z \in \boldsymbol \CC_M $ and  prove that there is a time $T>0$ and a constant $\delta>0$ such that for any $\bp_0,  \bp_f \in B_{\boldsymbol H^3_{(V)} }(\bo z,\delta)$ which are unitarily equivalent to $\bo z$, there is a control $u\in L^2((0,T),\R)$ satisfying  
$\bo \psi(T,\bp_0, u ) = \bp_f$. Here we use the following technical lemma whose proof is postponed to the end of this subsection.
\begin{lemma} \label{lemme_propagateur_H3}
For any $\bo z \in \bo \CC_M$ and  $\epsilon >0$, there is $\delta >0$ such that for any $\bo \phi \in B_{ \bo H^3_{(V)}}(\bo z, \de)$, which  is  unitarily equivalent to $\bo z$, there exists $\UU_\phi \in U(L^2)$ satisfying $\UU_\phi \bo z = \bo \phi$ and $\|\UU_\phi \varphi_j - \varphi_j \|_{H^3_{(V)}} < \epsilon$ for $j=1,\dots,M$.
\end{lemma}

Notice that under   Conditions~$\mathrm{(C_6)}$ and~$\mathrm{(C_7)}$, we can apply Theorem~\ref{th_controle_exact_local}  in the case of  $M$ equations and   $C_0=C_f=I_M$. We denote by $\de_{*}$ and $T_{*}$ the corresponding radius and time given in Theorem~\ref{th_controle_exact_local}.
Let $\delta$ be the constant in Lemma~\ref{lemme_propagateur_H3} corresponding to    $\epsilon = \frac{\delta_{*}}{M}$. 
Then for any $\bp_f \in B_{\bo H^3_{(V)} }(\bo z,\delta)$, which  is  unitarily equivalent to~$\bo z$, we have  $\sum_{j=1}^M\|\UU_{\psi_f} \varphi_j - \varphi_j \|_{H^3_{(V)}} < \delta_{*}$. Thus Theorem~\ref{th_controle_exact_local}  implies    the existence of a control $u_f \in L^2((0,T_{*}),\R)$ driving the solution of~(\ref{syst_Neq}) of size $M$ from $(\varphi_1,\dots,\varphi_M)$ to $\UU_{\psi_f} (\varphi_1, \dots,   \varphi_M)$. As $\bo z \in \bo \CC_M $, there exists a matrix $C\in \C^{N\times M}$ such that $\bo z = C (\varphi_1,\dots,\varphi_M)$. Then we have $$ C \UU_{\psi_f} (\varphi_1, \dots,   \varphi_M)=\UU_{\psi_f} C (\varphi_1, \dots,   \varphi_M)=\UU_{\psi_f}  \bo z=  \bo \psi_f.$$
Combining this with  the fact that (\ref{syst_Neq}) is linear with respect to the state, we get  that
 the control $u_f$ also drives the solution of (\ref{syst_Neq}) of size $N$ from $\bo z $ to $\bp_f $  (cf.~\eqref{E:com}).

\noindent
The same strategy leads to the existence of a control $u_0 \in L^2((0,T_{*}),\R)$ driving the solution of (\ref{syst_Neq}) of size $N$ from $\overline{\bo z}$ to $ \overline{\bp_0}$. Thus, using the time reversibility property and setting $T=2 T_{*}$, $u(t) = u_0(T_{*}-t)$ on $(0,T_{*})$ and $u(t)=u_f(t-T_{*})$ on $(T_{*},T)$, we end the proof of the  first step.

\medskip
\textit{Second step}.
Let $M \in \N^*$ and $\bo z_0, \bo z_f \in \bo \CC_M$ be unitarily equivalent. In this step, we prove that there  is a constant $\de>0$ and a time $T>0$ such that for any  $\bo\psi_0\in B_{\bo H^3_{(V)}}(\bo z_0, \de)$ and   $\bo\psi_f\in B_{\bo H^3_{(V)}}(\bo z_f, \de)$,  which are unitarily equivalent to~$\bo z_0$, there is
    a control $u \in L^2((0,T),\R)$ such that   $\bo \psi(T, \bo\psi_0, u) = \bo\psi_f$.

\noindent
As $\bo z_0, \bo z_f \in \bo\CC_M$, there exists $\UU \in U(\CC_M)$ such that $\bo z_f = \UU \bo z_0$. Since~$  U(\CC_M)$ is connected, we can choose a continuous
 mapping $t \in [0,1] \mapsto \UU(t) \in U(\CC_M)$ such that  $\UU(0) = I_M$ and $\UU(1) = \UU$. Then using the exact controllability result proved in the first step  for the vectors $\UU(t) \bo z_0,$~$t\in [0,1]$ and an    argument of   compactness, as in the third step of the proof of Theorem~\ref{th_controle_exact_local}, we get the required property.

\medskip
\textit{Third step}. Let us take any unitarily equivalent  $\bp_0,\bp_f\in\boldsymbol \SS\cap \boldsymbol H^{4}_{(V)}\cap \Be $   and prove that there is a time $T>0$ and a control $u\in L^2((0,T),\R)$ such that   $ \bp(T,\bp_0,u)= \bp_f $.
Applying Theorem~\ref{Approx_contr} to $\bp_0$ and  $\overline \bp_f$, we find sequences $T_{0n}, T_{fn}$ and $u_{0n}\in L^2((0,T_{0n}),\R), u_{fn}\in L^2((0,T_{fn}),\R)$ such that 
$$
\|\bp(T_{0n},  {\bp_0},u_{0n})-   \bp_{01}\|_{H^3_{(V)}} + \|\bp(T_{fn},  {\overline \bp_f},u_{fn})-    {\overline \bp_{f1}}\|_{H^3_{(V)}}  \underset{n \to \infty}{\longrightarrow} 0
$$ 
for some $\bp_{01},\bp_{f1}\in \boldsymbol\CC_M$. By the second step, we have exact controllability between some $\delta$-neighbourhoods of $\bp_{01}$ and $\bp_{f1}$ (notice that these vectors are unitarily equivalent). Choosing $n$ so large that  
$$
\|\bp(T_{0n},  {\bp_0},u_{0n})-   \bp_{01}\|_{H^3_{(V)}} + \|\overline{\bp(T_{fn},  {\overline \bp_f},u_{fn}})-    { \bp_{f1}}\|_{H^3_{(V)}}  < \de,
$$ 
we find a time $\tilde T $ and a control $\tilde u \in L^2((0,\tilde T),\R)$ such that 
$$
\bp(\tilde T ,  {\bp(T_{0n},  {\bp_0},u_{0n})},\tilde u )=\overline{\bp(T_{fn},  {\overline \bp_f},u_{fn}}).
$$
Taking $T=T_{0n}+\tilde T+T_{fn}$ and $u(t)=u_{0n}(t)$ for $t\in (0,T_{0n})$, $u(t)=\tilde u (t-  T_{0n})$ for $t\in (T_{0n},T_{0n}+ \tilde T)$, and  $u(t)=u_{fn}(T-t)$ for $t\in (T_{0n}+ \tilde T, T)$, and using the time reversibility property, we get $ \bp(T,\bp_0,u)= \bp_f$.

\medskip
\textit{Fourth step}. 
By the time reversibility property, to complete the proof of the theorem, it remains to show that for any $\bp_0\in\boldsymbol \SS\cap \boldsymbol H^{4}_{(V)} $ we have $\bp(T, \bp_0,u) \in \boldsymbol H^{4}_{(V)}\cap\Be $  for some $T>0$ and $u\in L^2((0,T),\R)$. 
 
Let us take any $\bp_{0n}, \bp_f\in\boldsymbol \SS\cap \boldsymbol H^{4}_{(V)} \cap \Be $ such that $\bp_{0n} \underset{n\to\ty}{\longrightarrow} \bp_{0}$ in $\boldsymbol L^2$. From the previous step, there are sequences $T_n$ and $u_n\in L^2((0,T_n),\R)$ such that $\bp(T_n, \bp_{0n},u_n)= \bp_f  $. Then
$$
\|\bp(T_n, \psi_{0}^j,u_n)-\bp_f\| = \|\bp_0-\bp_{0n}\|  \underset{n \to \infty}{\longrightarrow} 0,
$$
therefore
$$
\prod_{j=1}^N|\lag \psi(T_n, \psi_{0}^j,u_n),\varphi_{j}\rag |^2 \underset{n \to \infty}{\longrightarrow}   \prod_{j=1}^N|\lag \psi_f^j,\varphi_{j}\rag |^2 \neq 0.
$$ 
Thus $\bp(T_n, \bo\psi_{0},u_n)\in \Be$ for sufficiently large $n$. Finally, taking a control $\tilde{u} \in C^\infty_0((0,T_n),\R)$ sufficiently close to  $u$ in   $L^2((0,T_n),\R)$, we get $\bp(T, \bp_0,\tilde u) \in \boldsymbol H^{4}_{(V)}\cap\Be $.
This completes the proof of Theorem~\ref{T:4.1}.

\end{proof}

We end this section by the proof of Lemma~\ref{lemme_propagateur_H3}.
\begin{proof}[Proof of Lemma~\ref{lemme_propagateur_H3}.]
Let $\AA_{\bo \phi} := \text{Span} \{ \phi_i \, ; \, i=1, \dots,N \}$. As $\bo \phi$ and $\bo z$ are unitarily equivalent, there exists a linear map $L_{\bo \phi} : \AA_{\bo z} \to \AA_{\bo \phi}$ such that $L_{\bo \phi} \bo z= \bo \phi$ and
\begin{equation*}
\lag L_{\bo \phi} \xi, L_{\bo \phi} \zeta \rag = \lag \xi , \zeta \rag, \quad \forall \xi, \zeta \in \AA_{\bo z}. 
\end{equation*}
Let $\{\psi_k^z\}_{1\le k\le M}$ be an orthonormal basis in  $\CC_M$ (with respect to the scalar product in~$L^2$ ) such that    $\{\psi_k^z\}_{1\le k\le n}$ is  a basis in  $\AA_{\bo z}$.  If we define  $\psi_j^\phi := L_{\bo \phi} \psi_j^z$ for $j=1,\dots,n$, then $\{\psi_k^\phi\}_{1\le k\le n}$ will be an orthonormal basis in $\AA_{\bo \phi}$ and $\psi_j^\phi \underset{\bo \phi \to \bo z}{\longrightarrow} \psi_j^z$ in $H^3_{(V)}$ for $j=1, \ldots, n$.
 Let 
\begin{align*}
\tilde{\psi}_k^\phi &:= \psi_k^\phi, \quad \forall k \in \{1, \dots, n\},
\\
\tilde{\psi}_k^\phi &:= \psi_k^z - \sum_{j=1}^n \lag \psi_k^z,\psi_j^\phi \rag \psi_j^\phi, \quad \forall k \in \{ n+1, \dots,M\}.
\end{align*}
It is  easy to see that  $\tilde{\psi}_k^\phi \underset{\bo \phi \to \bo z}{\longrightarrow} \psi_k^z$ in $H^3_{(V)}$  for $k=1,\dots,M$. Thus if $\bo \phi$ is sufficiently close to $\bo z$ in $\bo H^3_{(V)}$, then $\{\tilde{\psi}_k^\phi\}_{1\le k\le M}$ is linearly independent. We denote by $\{\hat{\psi}_k^\phi\}_{1\leq k \leq M}$ the associated orthonormal family given by the Gram-Schmidt process. Notice that $\hat{\psi}_k^\phi = \psi_k^\phi$ for $k \in \{1,\dots,n\}$ and $\hat{\psi}_k^\phi \underset{\bo \phi \to \bo z}{\longrightarrow} \psi_k^z$ in~$H^3_{(V)}$  for $k=1,\dots,M$.
Let $\UU_\phi \in U(L^2)$ be any operator such that $\UU_\phi \psi_j^z = \hat{\psi}_j^\phi$  for every $j \in \{1,\dots,M\}$. By construction we have that $\UU_\phi \bo z = L_\phi \bo z = \bo \phi$ and $\|\UU_\phi \varphi_j - \varphi_j\|_{H^3_{(V)}} \underset{\bo \phi \to \bo z}{\longrightarrow} 0$ for any $j \in \{1,\dots,M\}$. This ends the proof of Lemma~\ref{lemme_propagateur_H3}.

\end{proof}

%%%%%%%%%%%%%%%%%%%%%%%%%%%%%%%%%%%%%%%%%%%%%%%%%%%%%%%%%%%%%%%%%%%%%%%%%%%%%%%%%%%%%%%%%%%%%%%%%%%%%%%
\subsection{Proof of the Main Theorem}
%%%%%%%%%%%%%%%%%%%%%%%%%%%%%%%%%%%%%%%%%%%%%%%%%%%%%%%%%%%%%%%%%%%%%%%%%%%%%%%%%%%%%%%%%%%%%%%%%%%%%%%

  Let us fix an arbitrary    $V\in H^4 $, and
  let   $\QQ_V $ be the set of all functions $\mu\in H^4 $ such that Conditions  $\mathrm{(C_6)}$ and $\mathrm{(C_7)}$   are satisfied with the  functions $V$ and $\mu$ replaced by the functions $V+\mu$ and $\mu$.  Let us prove that \eqref{syst_Neq} is exactly controllable in $\boldsymbol H^{4}_{(V)}$ for any $\mu\in \QQ_V$. 
   Along with      \eqref{syst_Neq}, let us
consider the system 
\begin{equation} \label{syst_Neq2}
\left\{
\begin{aligned}
& i \partial_t \psi^j = \big(- \partial^2_{xx} + V(x)+\mu(x)\big) \psi^j - u(t) \mu(x) \psi^j,  &(t,x)& \in (0,T) \times (0,1),
\\
& \psi^j(t,0) = \psi^j(t,1) = 0, \;  &j&   \in\{1, \dots, N\},
\\
& \psi^j(0,x) = \psi^j_0(x),
\end{aligned}
\right.
\end{equation}and denote by $\tilde \bp$   its resolving operator. Clearly, we have 
\begin{equation}\label{4.1}
\tilde \bp(t,   \bp_0, u)=\bp(t,   \bp_0, u-1)
\end{equation} for any $\bp_0\in \boldsymbol H^{3}_{(0)}$, $t\in [0,T]$, and $u\in L^2((0,T), \R)$. 
By Theorem~\ref{T:4.1}, system~\eqref{syst_Neq2} is exactly controllable in $\boldsymbol\SS\cap \boldsymbol H^{4}_{(V+\mu)} $ for   any $\mu\in \QQ_V$.

  Let us take any $\bp_0,\bp_f\in\boldsymbol \SS\cap \boldsymbol H^{4}_{(V)} $ and   any control $ u_1\in  W^{1,1}((0,1),\R)$   such that 
$u_1(0)=0$   and  $u_1(1)=-1$.  By Proposition~\ref{P:2.1},  $\bp(1,   \bp_0, u_1)=: \bp_{01}\in  \boldsymbol \SS\cap \boldsymbol H^{4}_{(V+\mu)}$ and $\bp(1,  \overline{ \bp_f}, u_1)=: \overline{\bp_{f1}}\in  \boldsymbol \SS\cap \boldsymbol H^{4}_{(V+\mu)}$. The time reversibility property implies that   $\bp(1,   { \bp_{f1}}, u_2)=  {\bp_{f}}$, where $u_2(t)=u_1(1-t),  t\in [0,1]$. Since \eqref{syst_Neq2} is  exactly controllable,    there is a time $\tilde T$ and a control $\tilde u\in L^2((0,T), \R)$ such that $\tilde \bp(\tilde T,   \bp_{01}, \tilde u)= \bp_{f1}$. Finally, choosing $T=\tilde T+ 2$ and   $u(t)=u_{1}(t)$ for $t\in (0,1)$, $u(t)=\tilde u(t-\tilde T)-1$ for $t\in (1,1+\tilde  T)$, and  $u(t)=u_{2}(t-1-\tilde{T})$ for $t\in (1+ \tilde T, T)$, we get  $
 \bp(T,\bp_0,u)= \bp_f 
 $. This proves the global exact controllability of \eqref{syst_Neq} in $\boldsymbol H^{4}_{(V)}$ for any $\mu\in \QQ_V$. 
 
 \smallskip
 
It remains to show that the set $\QQ_V$ is residual in $H^4 $. Let us write $\QQ_V=  \QQ_V^6 \cap \QQ_V^7$, where 
  $\QQ_V^j$ is the  set of all functions $\mu\in H^4 $ such that Condition  $\mathrm{(C_j)}$  is satisfied with $V$ and $\mu$ replaced by $V+\mu$ and $\mu$, $j=6,7$. Since  the intersection of two residual sets is residual, the proof of the Main Theorem follows from the following result. 
\begin{lemma}\label{L:4.1}
For any $V\in H^s, s\ge4$, the sets $\QQ_V^6$ and $\QQ_V^7$ are residual in~$H^s $. 
\end{lemma}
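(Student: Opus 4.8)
The plan is to prove genericity of each condition separately by exhibiting $\QQ_V^6$ and $\QQ_V^7$ as countable intersections of open dense subsets of $H^s$. The key analytic tool is the explicit dependence of the eigenvalues $\la_{j,V+\mu}$ and eigenvectors $\varphi_{j,V+\mu}$ on the perturbation $\mu$: by standard analytic perturbation theory for the self-adjoint operator $A_{V+\mu}$ with simple spectrum (simplicity holds on a bounded interval with Dirichlet conditions), the maps $\mu \mapsto \la_{j,V+\mu}$ and $\mu \mapsto \varphi_{j,V+\mu}$ are real-analytic from $H^s$ into $\R$ and $H^s$ respectively. This analyticity is what converts each non-degeneracy condition into the statement that a certain analytic functional does not vanish identically, which by the analyticity and a density/connectedness argument forces its zero set to be a proper closed subset with empty interior, i.e. a closed nowhere dense set whose complement is open and dense.

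For $\QQ_V^7$ I would fix $M \in \N^*$ and $\boldsymbol r \in \Q^{M+1}\setminus\{\boldsymbol 0\}$ and set
\begin{equation*}
\QQ_{V,M,\boldsymbol r}^7 := \Big\{ \mu \in H^s \, ; \, r_0 + \sum_{j=1}^M r_j \la_{j,V+\mu} \neq 0 \Big\}.
\end{equation*}
The map $\mu \mapsto r_0 + \sum_{j=1}^M r_j \la_{j,V+\mu}$ is real-analytic, so $\QQ_{V,M,\boldsymbol r}^7$ is open, and its complement has empty interior provided the functional is not identically zero. To see non-vanishing it suffices to exhibit a single direction along which the functional varies; a convenient choice is to use the first-order perturbation formula $\frac{\md}{\md t}\la_{j,V+t\mu}\big|_{t=0} = \lag \mu \varphi_{j,V}, \varphi_{j,V}\rag$ and the fact that distinct eigenfunctions produce linearly independent first-order rates, so that the single affine relation $r_0 + \sum_j r_j \la_{j,V+\mu}=0$ cannot hold on an open set. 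Then $\QQ_V^7 = \bigcap_{M}\bigcap_{\boldsymbol r} \QQ_{V,M,\boldsymbol r}^7$ is a countable intersection of open dense sets, hence residual.

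For $\QQ_V^6$ I would argue similarly but condition-by-index. Fix $j \in \{1,\dots,N\}$ and set
\begin{equation*}
\QQ_{V,j,k}^6 := \big\{ \mu \in H^s \, ; \, \lag \mu \varphi_{j,V+\mu}, \varphi_{k,V+\mu}\rag \neq 0 \big\},
\end{equation*}
which is open by analyticity of the eigenvectors. Density again follows from showing the analytic map $\mu \mapsto \lag \mu \varphi_{j,V+\mu}, \varphi_{k,V+\mu}\rag$ is not identically zero, for which one perturbs $\mu$ in a direction that makes the bracket nonzero (e.g. perturbing by a multiple of the product $\varphi_{j,V}\varphi_{k,V}$, so that the leading contribution to the bracket is $\int_0^1 \varphi_{j,V}^2 \varphi_{k,V}^2 \, \md x > 0$). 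The quantitative lower bound $C_j/k^3$ in $\mathrm{(C_6)}$ is then obtained not pointwise-generically but from the known asymptotics $\lag \mu \varphi_{j,V}, \varphi_{k,V}\rag \sim c/k^3$ as $k \to \infty$ for $\mu \in H^s$ with $\mu'(0),\mu'(1)$ not both zero (an integration-by-parts computation using the explicit boundary behaviour of Dirichlet eigenfunctions): one reduces to checking finitely many brackets are nonzero, each a dense open condition, while the tail is automatically bounded below by the asymptotic once a single transversality condition on $\mu$ at the endpoints is imposed. Intersecting over $j$ and $k$ gives $\QQ_V^6$ as a countable intersection of open dense sets.

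The main obstacle I anticipate is establishing density rather than openness: openness is immediate from analyticity, but to prove each complement is nowhere dense one must verify that the relevant analytic functional is genuinely non-constant on $H^s$, which requires choosing an explicit perturbation direction and computing the first variation of $\la_{j,V+\mu}$ or $\varphi_{j,V+\mu}$ without the perturbation being absorbed by an eigenvalue collision. The uniform lower bound in $\mathrm{(C_6)}$ is the technically delicate part, since it is not a single non-vanishing statement but requires controlling infinitely many brackets simultaneously; the resolution is to separate the finite "bad" range of $k$ (handled by finitely many dense open conditions) from the asymptotic tail (handled by a structural estimate valid for all $\mu$ in a dense open set determined by endpoint data). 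I would expect to invoke a Baire category argument at the very end to conclude that the countable intersections are themselves residual, and finally that $\QQ_V = \QQ_V^6 \cap \QQ_V^7$ is residual as an intersection of two residual sets.
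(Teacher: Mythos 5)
Your proposal follows essentially the same route as the paper: $\QQ_V^7$ as a countable intersection of sets that are open by continuity of the spectrum and dense via the first-variation formula $\frac{\dd}{\dd\sigma}\la_{j,W+\sigma P}\big|_{\sigma=0}=\lag P,\varphi_{j,W}^2\rag$ together with the linear independence of the squared eigenfunctions (which the paper takes from P\"oschel--Trubowitz rather than asserting); and $\QQ_V^6$ by combining an integration-by-parts asymptotic for the tail in $k$ with countably many open dense non-vanishing conditions for the remaining brackets. Two steps, however, would fail as written. First, your endpoint transversality condition is too weak: the integration by parts gives $k^3\lag\mu\varphi_{j,W},\varphi_{k,W}\rag=-4j\pi^{-1}\big((-1)^{j+k}\mu'(1)-\mu'(0)\big)+O(k^{-1})$, so the $k^{-3}$ coefficient is nonzero for \emph{both} parities of $j+k$ only if $\mu'(1)+\mu'(0)\neq0$ and $\mu'(1)-\mu'(0)\neq0$. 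Requiring merely that $\mu'(0),\mu'(1)$ are not both zero admits, e.g., $\mu'(0)=\mu'(1)\neq 0$, for which the leading term vanishes along one parity and the claimed lower bound $C_j/k^3$ breaks down on the tail.

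Second, in the density argument for $\{\mu\,;\,\lag\mu\varphi_{j,V+\mu},\varphi_{k,V+\mu}\rag\neq0\}$, perturbing in the direction $\varphi_{j,V}\varphi_{k,V}$ produces the clean first variation $\int_0^1\varphi_{j,V}^2\varphi_{k,V}^2\,\md x$ only at the base point $\mu=0$, where the contributions from the motion of the eigenfunctions are annihilated by the factor $\mu$; at a general base point those contributions survive and $\int\varphi_j^2\varphi_k^2$ is not obviously ``the leading contribution.'' To get density near an \emph{arbitrary} $\mu$ you therefore need either the identity principle for the real-analytic functional (which you only gesture at), or the paper's more economical device: pick $\mu_0$ near the target with $\lag\mu_0\varphi_{j,V},\varphi_{k,V}\rag\neq0$ and observe that $s\mapsto s^{-1}\lag s\mu_0\,\varphi_{j,V+s\mu_0},\varphi_{k,V+s\mu_0}\rag$ is real-analytic and nonzero at $s=0$, hence nonzero for almost every $s$, giving admissible points $s\mu_0$ arbitrarily close to $\mu_0$. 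With these two repairs your argument coincides with the paper's.
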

This lemma is proved in Section~\ref{GEN}. 
See~\cite{MasonSigalotti10} for the proof of the fact that~$\QQ_V^7$ is residual in a much more general case. Nevertheless, we give its proof in the Appendix, since it is     simpler in our setting.

%%%%%%%%%%%%%%%%%%%%%%%%%%%%%%%%%%%%%%%%%%%%%%%%%%%%%%%%%%%%%%%%%%%%%%%%%%%%%%%%%%%%%%%%%%%%
%%%%%%%%%%%%%%%%%%%%%%%%%%%%%%%%%%%%%%%%%%%%%%%%%%%%%%%%%%%%%%%%%%%%%%%%%%%%%%%%%%%%%%%%%%%% 
\section{Appendix}
%%%%%%%%%%%%%%%%%%%%%%%%%%%%%%%%%%%%%%%%%%%%%%%%%%%%%%%%%%%%%%%%%%%%%%%%%%%%%%%%%%%%%%%%%%%%
%%%%%%%%%%%%%%%%%%%%%%%%%%%%%%%%%%%%%%%%%%%%%%%%%%%%%%%%%%%%%%%%%%%%%%%%%%%%%%%%%%%%%%%%%%%%

%%%%%%%%%%%%%%%%%%%%%%%%%%%%%%%%%%%%%%%%%%%%%%%%%%%%%%%%%%%%%%%%%%%%%%%%%%%%%%%%%%%%%%%%%%%%
\subsection{Moment problem}
\label{annexe_pb_moment}
%%%%%%%%%%%%%%%%%%%%%%%%%%%%%%%%%%%%%%%%%%%%%%%%%%%%%%%%%%%%%%%%%%%%%%%%%%%%%%%%%%%%%%%%%%%%

In this article, we use several times the following result about the   trigonometric moment problem.
\begin{proposition} \label{prop_pb_moment}
 Assume that Condition $\mathrm{(C_4)}$ is satisfied. Let $(\omega_n)_{n \in \N}$ be the increasing sequence defined by
\begin{equation*}
\{ \omega_n \, ;\, n \in \N \} = \{ \lambda_{k,V}-\lambda_{j,V} \, ; \, j \in \{1,\dots,N\}, \, k \geq j+1 \text{ and } k=j=N \}.
\end{equation*}
Then, for any $T>0$, there exists a continuous linear map
\begin{equation*}
\LL : \ell^2_r(\N,\C) \rightarrow L^2((0,T),\R)
\end{equation*}
such that for every $d=(d_n)_{n\in \N} \in \ell^2_r(\N,\C)$, we have 
\begin{equation*}
\int_0^T \LL(d)(t) e^{i \omega_n t} \md t = d_n, \quad \forall n \in \N.
\end{equation*}
\end{proposition}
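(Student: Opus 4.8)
The plan is to reduce the moment problem to the existence of a bounded biorthogonal family for a symmetrized family of exponentials, and to obtain that family from a Riesz basis property established through a grouped version of Ingham's inequality. First I would symmetrize: enumerate $\{\pm\omega_n\}$ as $(\sigma_m)_{m\in\Z}$ with $\sigma_0=0$, $\sigma_n=\omega_n$, $\sigma_{-n}=-\omega_n$ for $n\in\N^*$, and set $E_m(t):=e^{i\sigma_m t}$. Given $d\in\ell^2_r(\N,\C)$ I would form the symmetric data $c_m:=d_m$ for $m\ge 0$ and $c_m:=\overline{d_{-m}}$ for $m<0$; since $d_0\in\R$, this satisfies $\overline{c_{-m}}=c_m$. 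A real-valued $f\in L^2((0,T),\R)$ solving $\int_0^T f(t)e^{i\omega_n t}\md t=d_n$ for all $n$ is then equivalent to the bilateral system $\int_0^T f\,E_m\,\md t=c_m$ for all $m\in\Z$ together with $f=\overline f$.

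Assuming $(E_m)_{m\in\Z}$ is a Riesz basis of its closed span $H$ in $L^2((0,T),\C)$, it admits a bounded biorthogonal family $(h_m)\subset H$ with $\lag h_a,E_b\rag=\delta_{a=b}$. Setting $g_m:=h_{-m}$ and defining $\LL(d):=\sum_m c_m g_m$ produces an element of $H$ with $\|\LL(d)\|_{L^2}\le C\|d\|_{\ell^2}$, so $\LL$ is a continuous linear map. Biorthogonality gives $\int_0^T \LL(d)\,E_{m'}\,\md t=\sum_m c_m\lag g_m,E_{-m'}\rag=c_{m'}$, which for $m'=n\ge 0$ is exactly $\int_0^T\LL(d)\,e^{i\omega_n t}\,\md t=d_n$. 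Finally $\overline{E_m}=E_{-m}$ forces $\overline{h_a}=h_{-a}$ by uniqueness, hence $\overline{g_m}=g_{-m}$ and $\overline{\LL(d)}=\sum_m\overline{c_m}\,g_{-m}=\sum_m c_m g_m=\LL(d)$, so $\LL(d)$ is real-valued. Thus everything rests on the Riesz basis property of $(E_m)$ for \emph{every} $T>0$.

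To establish this I would use the spectral asymptotics $\lambda_{k,V}=\pi^2k^2+O(1)$ of the Dirichlet operator $A_V$, which give diverging eigenvalue gaps $\lambda_{k+1,V}-\lambda_{k,V}\to\infty$. Consequently the positive frequencies organize into clusters $C_k:=\{\lambda_{k,V}-\lambda_{j,V}\,;\,1\le j\le\min(N,k-1)\}$ of cardinality $\le N$ and diameter $\le\lambda_{N,V}-\lambda_{1,V}$, whose pairwise internal gaps equal the fixed nonzero numbers $\lambda_{j',V}-\lambda_{j,V}$ and are therefore bounded below by $\gamma:=\min_{1\le j<j'\le N}|\lambda_{j',V}-\lambda_{j,V}|>0$, while consecutive clusters are separated by $(\lambda_{k+1,V}-\lambda_{k,V})-(\lambda_{N,V}-\lambda_{1,V})\to\infty$. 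Condition~$\mathrm{(C_4)}$ ensures the whole collection $(\sigma_m)$ is injective. Partitioning $(\sigma_m)$ into these clusters, their reflections $-C_k$, and a central group around $0$, I obtain a grouping with uniformly bounded group size, uniformly positive internal separation, and inter-group gaps tending to infinity; a grouped version of Ingham's inequality (in the spirit of the nonharmonic Fourier analysis used in \cite{BeauchardLaurent} and the references therein) then yields the two-sided bounds defining the Riesz basis, for every $T>0$.

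The step I expect to be the main obstacle is the lower Ingham bound for arbitrary $T>0$: because each cluster carries up to $N$ nearly resonant frequencies that cannot be separated beyond $2\pi/T$, the simple-gap inequality does not apply and each cluster must be controlled jointly. The saving feature is that the inter-cluster gaps diverge, so for any fixed $T$ only finitely many clusters lie below the Ingham threshold; the grouped inequality applies to the remaining tail, each surviving cluster treated as one group, with the internal separation $\gamma$ keeping the associated finite Gram determinants bounded away from $0$, and the finitely many head frequencies are adjoined by the standard fact that enlarging a Riesz sequence by finitely many elements preserving global linear independence keeps it a Riesz basis of the enlarged span. Combining this with the first two paragraphs completes the construction of $\LL$.
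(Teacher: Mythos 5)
Your proposal is correct in its overall architecture and it matches the paper's construction in the symmetrization step (adjoining $-\omega_n$ with conjugate data), in deducing $\LL$ from the Riesz basis property of $(e^{i\omega_n\cdot})_{n\in\Z}$ in its closed span, and in the conjugation argument showing $\LL(d)$ is real-valued; the paper phrases the last two steps via the isomorphism $J_0: f\mapsto(\int_0^T f e^{i\omega_n t}\,\md t)_n$ and its inverse rather than via an explicit biorthogonal family, but that is the same thing. Where you diverge is in how the Riesz basis property is obtained for \emph{every} $T>0$. You treat the clusters $\{\lambda_{k,V}-\lambda_{j,V}\}_{1\le j\le N}$ as genuinely resonant packets and invoke a grouped Ingham inequality plus a finite-perturbation argument for the head frequencies. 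The paper instead observes that the full symmetrized sequence satisfies a \emph{uniform} gap condition outright -- the internal separation within each packet is bounded below by the fixed constant $\min_{q<m\le N}(\lambda_{m,V}-\lambda_{q,V})>0$, and the inter-packet gaps diverge -- and that its upper Beurling density is $D^+=0$ because of the quadratic growth $\lambda_{k,V}=k^2\pi^2+O(1)$; Beurling's theorem (Komornik--Loreti, Theorem 9.2) then gives the Riesz basis property for every $T>2\pi D^+=0$ in one stroke. So the obstacle you identify (``nearly resonant frequencies that cannot be separated beyond $2\pi/T$'') is not quite the right diagnosis: the gap is uniform, and the only issue is that the classical Ingham threshold $T>2\pi/\gamma$ is not met for small $T$, which is exactly what the density formulation of Beurling's theorem is designed to circumvent. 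Your grouped-Ingham route does work -- the divided-difference estimates convert back to coefficient estimates thanks to the uniform internal separation, and adjoining finitely many exponentials to a zero-density Riesz sequence is a standard stability fact -- but it amounts to reproving the relevant special case of Beurling's theorem, and the two steps you leave as black boxes (the precise grouped inequality and the minimality needed for the finite enlargement) carry most of the technical weight. The paper's choice of lemma buys a substantially shorter proof; your version buys independence from the sharp density theorem at the cost of those two auxiliary arguments.
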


\begin{proof} 
Let us set $\omega_{-n} := -\omega_n$ for $n \in \N$, and   let $D^+$ be the upper density of the sequence $(\omega_n)_{n\in \Z}$, i.e.,
\begin{equation*}
D^+ := \underset{r \to \infty}{\lim} \frac{n^+(r)}{r},
\end{equation*}
where $n^+(r)$ is the largest number of elements of the sequence $(\omega_n)_{n\in \Z}$ in an interval of length $r$. By the Beurling theorem (e.g., see~\cite[Theorem 9.2]{KomornikLoreti05}), if the uniform
gap condition    
\begin{equation} \label{gap_pb_moment}
\omega_{n+1} - \omega_n \geq \gamma, \quad \forall n \in \N
\end{equation} is satisfied for some $\gamma >0$,  
then for any for $T> 2\pi D^+$, the family $(e^{i \omega_n \cdot})_{n \in \Z}$ is a Riesz basis of $H_0 := \text{Adh}_{L^2(0,T)} \big( \text{Span} \{ e^{i \omega_n \cdot} \, ; \, n \in \Z \} \big)$. Let us show that, under Condition $\mathrm{(C_4)}$,  the  sequence $(\omega_n)_{n\in \Z}$ has a uniform
gap    and $D^+ =0$.  

\smallskip
\noindent
Indeed,  by the well-known asymptotic formula for the eigenvalues (e.g., see~\cite[Theorem 4]{PoschelTrubowitz}),  
\begin{equation} \label{estimee_valeur_propres}
\lambda_{k,V} = k^2 \pi^2 + \int_0^1 V(x) \md x + r_k, \quad \text{with } \sum_{k=1}^{\infty} r_k^2 < + \infty.
\end{equation}
This implies that for some sufficiently large integers   $n_0$ and $ k_0$, we have   
\begin{equation*}
\omega_{n_0+n} = \lambda_{k_0+p,V} - \lambda_{j,V}, \quad \text{where } n= p N + j, \: 1 \leq j \leq N, p\in \N.
\end{equation*}
Thus, the frequencies $(\omega_n)_{n\geq n_0}$ can be gathered as successive packets of $N$ frequencies such that  the minimal gap inside each packet is 
\begin{equation*}
\tilde{\gamma} :=  \min\limits_{1\leq q < m \leq N}( \lambda_{m,V}-\lambda_{q,V}).
\end{equation*}
Using Condition $\mathrm{(C_4)}$, we obtain $\tilde{\gamma} >0$.
The gap between the $(\ell+1)^{th}$ packet and the $\ell^{th}$ packet is 
$$\lambda_{\ell+1,V}-\lambda_{\ell,V}+ \lambda_{1,V}-\lambda_{N,V}$$which goes to infinity as $\ell\to\ty$, by \eqref{estimee_valeur_propres}.   On the other hand,  $\omega_n \neq \omega_k$ for $n \neq k$, by Condition $\mathrm{(C_4)}$. Hence   we get the uniform gap condition~(\ref{gap_pb_moment}).  From (\ref{estimee_valeur_propres}) it follows immediately that $D^+=0$.
Thus the family $(e^{i \omega_n \cdot})_{n \in \Z}$ is a Riesz basis of $H_0$. This implies that the map 
\begin{equation*}
\begin{array}{cccc}
J_0: &  H_0
& \rightarrow & \ell^2(\Z,\C)
\\
& f & \mapsto &  \left( \int_0^T f(t) e^{i \omega_n t} \md t \right)_{n \in \Z}
\end{array}
\end{equation*}
is an isomorphism. Then, the map $\LL : d \in \ell^2_r(\N,\C) \mapsto J_0^{-1}(\tilde{d})$, where $\tilde{d}_n := d_n$   and $\tilde{d}_{-n} := \overline{d_n}$ for $n \in \N$, satisfies the required properties.

\end{proof}

%%%%%%%%%%%%%%%%%%%%%%%%%%%%%%%%%%%%%%%%%%%%%%%%%%%%%%%%%%%%%%%%%%%%%%%%%%%%%%%%%%%%%%%%%%%%
%%%%%%%%%%%%%%%%%%%%%%%%%%%%%%%%%%%%%%%%%%%%%%%%%%%%%%%%%%%%%%%%%%%%%%%%%%%%%%%%%%%%%%%%%%%%
 
 \subsection{Proof of Lemma~\RefNeqsSecGenericite}\label{GEN}
%%%%%%%%%%%%%%%%%%%%%%%%%%%%%%%%%%%%%%%%%%%%%%%%%%%%%%%%%%%%%%%%%%%%%%%%%%%%%%%%%%%%%%%%%%%%
%%%%%%%%%%%%%%%%%%%%%%%%%%%%%%%%%%%%%%%%%%%%%%%%%%%%%%%%%%%%%%%%%%%%%%%%%%%%%%%%%%%%%%%%%%%%

\textit{First step}. Let us show that   $\QQ_V^7$ is residual in $H^s$. It suffices to show that the set  $\QQ_0^7$ of all functions $W\in H^s$, such that the numbers $\{1, \la_{j,W}\}_{j\in \N^*}$	 are rationally independent, is residual in  $H^s$. Let us take any 
 $M\in \N^*$ and $\boldsymbol r\in \Q^{M+1}\backslash\{\boldsymbol 0\}$ and denote by  $\QQ_{M,\boldsymbol r}$ the set of all functions $W\in H^s$ such that
 $$
r_0 + \sum_{j=1}^M r_j \la_{j,W}\neq 0.
 $$Then we have $\QQ_0^7 =\bigcap_{M\in \N^*, \boldsymbol r\in \Q^M\backslash\{\boldsymbol 0\}}\QQ_{M,\boldsymbol r}$. Thus it is sufficient to prove that $\QQ_{M,\boldsymbol r}$ is open and dense in $H^s $.
Continuity of the eigenvalues\footnote{By~\cite[Theorem~3]{PoschelTrubowitz}, the eigenvalues $\la_{k,W}$ and eigenfunctions $\varphi_{k,W}$ are real-analytic functions with respect to  $W\in L^2$.}  $\la_{k,W}$ from $L^2 $ to~$\R$ implies that $\QQ_{M,\boldsymbol r}$ is open   in $H^s $.
  Let us show that
$\QQ_{M,\boldsymbol r}$ is dense in $H^s$. For any $W, P\in H^s$ and $\sigma\in \R$, differentiating    the identity  
$$(- \partial^2_{xx}+W +\sigma P-\la_{j,  W+\sigma P})\varphi_{j,  W+ \sigma P}=0$$
with respect to $\sigma$ at $\sigma=0$, we get
$$
(-\partial^2_{xx}+W -\la_{j, W})\frac{\dd \varphi_{j,W+\sigma
 P} }{\dd
\sigma}\Big|_{\sigma=0}+(P-\frac{\dd \la_{j,W+\sigma P } }{\dd \sigma}\Big|_{\sigma=0})\varphi_{j,W }=0.
$$Taking the scalar product of this identity with $\varphi_{j,W}$, we
obtain
$$
\frac{\dd \la_{j,W+\sigma
 P} }{\dd
\sigma}\Big|_{\sigma=0}= \langle P,  \varphi_{j,W} ^2  \rangle.
$$
Thus
\begin{equation}\label{5.3}
\frac{\dd  }{\dd
\sigma}\Big( r_0 + \sum_{j=1}^M r_j \la_{j,W+\sigma
 P}\Big)\Big|_{\sigma=0}= \langle P,  \sum_{j=1}^M r_j \varphi_{j,W} ^2    \rangle.
\end{equation}
By~\cite[Theorem 9]{PoschelTrubowitz}, for any $W\in L^2 $,   the functions $\{\varphi_{j,W}^2\}_{j=1}^\ty$ are linearly independent. Hence we can find   $P\in H^s$ such that
$$
\langle P,  \sum_{j=1}^M r_j \varphi_{j,W} ^2  \rangle\neq0.
$$Then (\ref{5.3}) implies that $W+\sigma
 P\in\QQ_{M,\boldsymbol r}$ for any $\sigma$ sufficiently close to $0$. This shows that $\QQ_{M,\boldsymbol r}$ is dense in $H^s$. Thus  $\QQ_V^7$ is residual in $H^s$.

\medskip
\textit{Second step}. Recall that  $\QQ_V^6$ is the set of all functions $\mu \in H^s$ such that for any $j\in \N^*$ there exists $C_j > 0$   verifying
 $$|\lag \mu \varphi_{j,V+\mu}, \varphi_{k,V+\mu} \rag| \ge \frac{C_j }{k^{3}}\quad\text{for all $k \in \N^*$.}$$   
We will use the following   well known estimates for any $W\in L^2$
 \begin{align}
 & \|\varphi_{k,W}-\varphi_{k,0}\|_{L^\ty}\le \frac{C}{k},\label{5.4}\\
 &   \|  \varphi_{k,W}'-  \varphi_{k,0} '  \|_{L^\ty}\le C,\label{5.5}
 \end{align}   (e.g., see in~\cite[Theorem 4]{PoschelTrubowitz}).
 Integrating by parts, we get for any $W\in H^s$
 \begin{align*}
 \lag \mu \varphi_{j,W} , \varphi_{k,W}  \rag= & \frac{1}{\la_{k,W} } \lag  (-\partial^2_{xx}+W)(\mu \varphi_{j,W}), \varphi_{k,W} \rag\\=&\frac{1}{\la_{k,W}} (\lag -   \mu '' \varphi_{j,W}, \varphi_{k,W}\rag +2\lag  - \mu'  \varphi_{j,W}', \varphi_{k,W}\rag\\&\quad+\la_{j,W}\lag   \mu \varphi_{j,W}, \varphi_{k,W}\rag).
\end{align*}  This implies that for $k\neq j$, we have
\begin{align}\label{5.6}
 \lag \mu \varphi_{j,W} , \varphi_{k,W}  \rag&=    \frac{1}{\la_{j,W}-\la_{k,W}} (\lag    \mu '' \varphi_{j,W}, \varphi_{k,W}\rag +2 \lag   \mu'  \varphi_{j,W}', \varphi_{k,W}\rag) .
\end{align}
Again integrating by parts, we obtain
 \begin{align}\label{5.7}
 \lag   \mu'  \varphi_{j,W}', \varphi_{k,W}\rag = &\frac{1}{\la_{k,W}} \lag  \mu '     \varphi_{j,W}', (-\partial^2_{xx}+W) \varphi_{k,W}\rag\nonumber\\ = &-\frac{1}{\la_{k,W}}  \mu '     \varphi_{j,W}'   \varphi_{k,W}'\Big|_{x=0}^{x=1}+\frac{1}{\la_{k,W}} \lag (-\partial^2_{xx}+W) (  \mu '     \varphi_{j,W}', \varphi_{k,W}\rag.
\end{align}Using \eqref{5.6} with $\mu$ replaced by $\mu''$, we get
$$
 \lag \mu'' \varphi_{j,W} , \varphi_{k,W}  \rag=    \frac{1}{\la_{j,W}-\la_{k,W}} (\lag    \mu^{(4)} \varphi_{j,W}, \varphi_{k,W}\rag +2\lag   \mu^{(3)}  \varphi_{j,W}', \varphi_{k,W}\rag) .
$$ 
Combination of this last equality with \eqref{estimee_valeur_propres}, \eqref{5.4}-\eqref{5.7} and the explicit expression $\varphi_{k,0}(x)~=~\sqrt{2}\sin(k\pi  x)$, yields that 
 \begin{align*} 
k^3 \lag \mu \varphi_{j,W} , \varphi_{k,W}  \rag&=-4j\pi^{-1}\mu '       \cos(j\pi x)\cos(k\pi x)\Big|_{x=0}^{x=1}+  {c_{k,j}}{k^{-1}}\nonumber\\&=-4j\pi^{-1}((-1)^{j+k}\mu '(1)-\mu'(0))        +  {c_{k,j}}{k^{-1}} ,
\end{align*} where for any $j\in \N^*$ the sequence $c_{j,k}, k>j$ is bounded in $\R$.  Thus for any $\mu$ from the set
$$
\BB=\{\mu \in H^s \, ; \, \mu '(1) \pm \mu'(0) \neq 0\}
$$ and for any   $W\in H^s$,    there is $K_j\in \N^*$ such that  
 $$|\lag \mu \varphi_{j,W}, \varphi_{k,W} \rag| \ge \frac{C_j }{k^{3}}$$   
  for all $k \ge K_j$. In particular, this is true for $W=V+\mu$. Combining this with the following result, we complete the proof.
  \begin{lemma}\label{L:5.1}
  For any $V\in H^s$, the set $\QQ_V^1$   of all functions $\mu \in H^s$ such that  
  \begin{equation}\label{5.8}
  \lag \mu \varphi_{j,V+\mu}, \varphi_{k,V+\mu} \rag \neq 0
  \end{equation}   for all $j,k \in \N^*$, is residual in $H^s$.
 \end{lemma}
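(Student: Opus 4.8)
The plan is to realize $\QQ_V^1$ as a countable intersection of open dense subsets of $H^s$ and then invoke the Baire category theorem. For $j,k\in\N^*$ I set
\[
g_{j,k}(\mu):=\lag \mu\,\varphi_{j,V+\mu},\varphi_{k,V+\mu}\rag,\qquad \QQ_{j,k}:=\{\mu\in H^s\,;\,g_{j,k}(\mu)\neq0\},
\]
so that $\QQ_V^1=\bigcap_{j,k\in\N^*}\QQ_{j,k}$. Since $\mu$ and the eigenfunctions are real-valued, each $g_{j,k}$ is real-valued; and by the real-analytic dependence of the eigenfunctions on the potential (see~\cite[Theorem~3]{PoschelTrubowitz}), composed with the bounded bilinear operations of multiplication by $\mu$ and the $L^2$ pairing, each $g_{j,k}$ is a real-analytic, hence continuous, function on $H^s$. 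In particular every $\QQ_{j,k}=g_{j,k}^{-1}(\R\setminus\{0\})$ is open, so it remains only to prove density of each $\QQ_{j,k}$.

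To this end I would first exhibit a single $\mu^*$ with $g_{j,k}(\mu^*)\neq0$. Take $P:=\varphi_{j,V}\varphi_{k,V}$; by elliptic regularity $\varphi_{j,V},\varphi_{k,V}\in H^{s+2}$, so $P\in H^s$. The scalar function $\beta(t):=g_{j,k}(tP)=t\lag P\,\varphi_{j,V+tP},\varphi_{k,V+tP}\rag$ is real-analytic in $t$, vanishes at $t=0$, and satisfies
\[
\beta'(0)=\lag P,\varphi_{j,V}\varphi_{k,V}\rag=\|\varphi_{j,V}\varphi_{k,V}\|^2>0,
\]
the strict positivity holding because $\varphi_{j,V}\varphi_{k,V}$ is a nonzero continuous function. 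Hence $\beta(t_0)\neq0$ for all small $t_0\neq0$, and $\mu^*:=t_0P$ gives $g_{j,k}(\mu^*)\neq0$; in particular $g_{j,k}\not\equiv0$ on $H^s$.

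Density then follows from one-dimensional analyticity. Given an arbitrary $\mu_0\in H^s$, I restrict $g_{j,k}$ to the line joining $\mu_0$ to $\mu^*$, i.e. consider $\gamma(t):=g_{j,k}\big(\mu_0+t(\mu^*-\mu_0)\big)$, which is real-analytic in $t\in\R$ with $\gamma(1)=g_{j,k}(\mu^*)\neq0$. As $\gamma\not\equiv0$ its zeros are isolated, so there are $t_n\to0$ with $\gamma(t_n)\neq0$; the points $\mu_0+t_n(\mu^*-\mu_0)$ then lie in $\QQ_{j,k}$ and converge to $\mu_0$. Thus each $\QQ_{j,k}$ is open and dense, and $\QQ_V^1$ is residual.

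The main technical point is the real-analyticity of $\mu\mapsto\varphi_{j,V+\mu}$ together with the regularity $\varphi_{j,V}\varphi_{k,V}\in H^s$; both are standard consequences of the analytic perturbation theory of~\cite{PoschelTrubowitz} and of elliptic smoothing. The one real idea is to produce a \emph{single} good point $\mu^*$ rather than to differentiate $g_{j,k}$ at an arbitrary zero: this sidesteps the awkward cross terms coming from the variation of both eigenfunctions $\varphi_{j,V+\mu}$ and $\varphi_{k,V+\mu}$ with $\mu$, which is exactly what makes the direct perturbation computation (used for $\QQ_V^7$) less transparent here.
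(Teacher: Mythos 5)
Your proof is correct and follows essentially the same route as the paper: write $\QQ_V^1$ as a countable intersection over $(j,k)$, get openness from the continuous (analytic) dependence of $\varphi_{k,V+\mu}$ on $\mu$, and get density from real-analyticity along a one-parameter family anchored at a point where the pairing against $\varphi_{j,V}\varphi_{k,V}\not\equiv 0$ is nonzero. The only cosmetic difference is that the paper perturbs along the ray $s\mapsto s\mu_0$ with $\mu_0$ chosen near the target in the dense set $\{\mu:\lag\mu\varphi_{j,V},\varphi_{k,V}\rag\neq0\}$, while you move along the segment toward a fixed good point $\mu^*=t_0\,\varphi_{j,V}\varphi_{k,V}$.
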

  Indeed, $\BB$ is open and dense in $H^s$ and  $\BB\cap \QQ_V^1\subset \QQ_V^6$. Then $\BB\cap \QQ_V^1$ is residual as an  intersection of two residual sets. Hence $\QQ_V^6$ is a residual set in~$H^s$.
  \begin{proof}[Proof of Lemma~\ref{L:5.1}.]
 For any $j,k\in \N^*$, let $\QQ_{V,j,k}^1$ be the set of  functions $\mu\in H^s$ such that \eqref{5.8} holds. Then $\QQ_V^1=\cap_{j,k\in \N^*} \QQ_{V,j,k}^1$ and it suffices to show that $\QQ_{V,j,k}^1$ is open and dense in $H^s$.  As above, the fact that $\QQ_{V,j,k}^1$ is open follows immediately from the continuous dependence of the eigenfunction~$\varphi_{k,V+\mu}$ on~$\mu$. Let us show that $\QQ_{V,j,k}^1$ is dense in $H^s$.
 Since $\varphi_{j,V}(x) \varphi_{k,V}(x) $ is not identically equal to zero, the set of    functions $\mu $ such that 
$\lag \mu \varphi_{j,V}, \varphi_{k,V} \rag \neq 0$  
is dense in $ H^s$. For any $\mu_0$ from that set,  the function  $\lag \mu_0 \varphi_{j,V+s\mu_0}, \varphi_{k,V+s\mu_0} \rag$ is non-zero    real-analytic  function with respect to $s\in \R$. Thus $s \mu_0\in \QQ_{V,j,k}^1$ almost surely for any $s\in \R$.    This   proves that $\QQ_{V,j,k}^1$ is dense in $H^s$.

\end{proof}

\section*{Conclusion and open problems}
In this article, we have proved simultaneous global exact controllability between any unitarily equivalent $N$-tuples of   functions in $ \SS \cap   H^4_{(V)}$. Our result is valid  in large time, for an arbitrary number of equations, and for an arbitrary potential. Hence, the spectrum of the free operator can be extremely resonant. Thus, not only we extend previous results on exact controllability for a single particle to  simultaneous controllability of $N$ particles, but we also improve the existing literature in \textsc{1d} for $N=1$.

Our proof combines several ideas. Using a Lyapunov strategy, we proved that any initial condition can be driven arbitrarily close to some finite sum of eigenfunctions. Then, designing a reference trajectory and using a rotation phenomenon on a suitable time interval we proved local exact controllability in $\bo H^3_{(V)}$ around $\bo \varphi$. Finally combining linearity of the equation with respect to the state and a compactness argument, we obtained global exact controllability under favourable hypotheses. The case of an arbitrary potential is dealt with a perturbation argument.

We mention here two possible ways to improve this result. The optimal functional setting for exact controllability is $H^3_{(V)}$. While using our Lyapunov function, we have dealt with more regular initial and final conditions to get convergence in $H^3$ from the boundedness in $H^4$. This issue of strong stabilization in infinite dimension is not specific to bilinear quantum system and is an open problem. The other possible improvement concerns the time of control. In our strategy, there are three steps requiring a time large enough : the approximate controllability, the rotation argument in local exact controllability, and the compactness argument.

\paragraph*{Acknowledgements.}  The first author thanks K.~Beauchard for having drawn his attention to the problem of simultaneous controllability and fruitful discussions. The authors were partially supported by ANR grant EMAQS No. ANR-2011-BS01-017-01, the second author was also partially supported by ANR grant STOSYMAP No. ANR-2011-BS01015-01.


\begin{thebibliography}{10}

\bibitem{BallMarsdenSlemrod82}
J.~M. Ball, J.E. Marsden, and M.~Slemrod.
\newblock Controllability for distributed bilinear systems.
\newblock {\em SIAM J. Control Optim.}, 20(4):575--597, 1982.

\bibitem{Beauchard05}
K.~Beauchard.
\newblock Local controllability of a 1-{D} {S}chr\"odinger equation.
\newblock {\em J. Math. Pures Appl. (9)}, 84(7):851--956, 2005.

\bibitem{BeauchardCoron06}
K.~Beauchard and J.-M. Coron.
\newblock Controllability of a quantum particle in a moving potential well.
\newblock {\em J. Funct. Anal.}, 232(2):328--389, 2006.

\bibitem{BeauchardLaurent}
K.~Beauchard and C.~Laurent.
\newblock Local controllability of 1{D} linear and nonlinear {S}chr\"odinger
  equations with bilinear control.
\newblock {\em J. Math. Pures Appl. (9)}, 94(5):520--554, 2010.

\bibitem{BeauchardMirrahimi09}
K.~Beauchard and M.~Mirrahimi.
\newblock Practical stabilization of a quantum particle in a one-dimensional
  infinite square potential well.
\newblock {\em SIAM J. Control Optim.}, 48(2):1179--1205, 2009.

\bibitem{BCCS11}
U.~Boscain, M.~Caponigro, T.~Chambrion, and M.~Sigalotti.
\newblock A weak spectral condition for the controllability of the bilinear
  {S}chr\"odinger equation with application to the control of a rotating planar
  molecule.
\newblock {\em Comm. Math. Phys.}, 311(2):423--455, 2012.

\bibitem{BoscainCaponigroSigalotti13}
U.~Boscain, M.~Caponigro, and M.~Sigalotti.
\newblock Multi-input {S}chr\"odinger equation: controllability, tracking, and
  application to the quantum angular momentum.
\newblock Preprint, arXiv:1302.4173, 2013.

\bibitem{BoscainChambrionSigalotti_review}
U.~Boscain, T.~Chambrion, and M.~Sigalotti.
\newblock On some open questions in bilinear quantum control.
\newblock Preprint, arXiv:1304.7181, 2013.

\bibitem{BCClogical_gate}
N.~Boussaid, M.~Caponigro, and T.~Chambrion.
\newblock {Implementation of logical gates on infinite dimensional quantum
  oscillators}.
\newblock In {\em {Proceedings of the American Control Conference 2012}}, pages
  5825--5830, Montreal, Canada, 2012.
\newblock 6 pages Programme INRIA Nancy Grand Est Color.

\bibitem{CMSB09}
T.~Chambrion, P.~Mason, M.~Sigalotti, and U.~Boscain.
\newblock Controllability of the discrete-spectrum {S}chr\"odinger equation
  driven by an external field.
\newblock {\em Ann. Inst. H. Poincar\'e Anal. Non Lin\'eaire}, 26(1):329--349,
  2009.

\bibitem{CoronBook}
J.-M. Coron.
\newblock {\em Control and nonlinearity}, volume 136 of {\em Mathematical
  Surveys and Monographs}.
\newblock American Mathematical Society, Providence, RI, 2007.

\bibitem{KomornikLoreti05}
V.~Komornik and P.~Loreti.
\newblock {\em Fourier series in control theory}.
\newblock Springer Monographs in Mathematics. Springer-Verlag, New York, 2005.

\bibitem{MasonSigalotti10}
P.~Mason and M.~Sigalotti.
\newblock Generic controllability properties for the bilinear {S}chr\"odinger
  equation.
\newblock {\em Comm. Partial Differential Equations}, 35(4):685--706, 2010.

\bibitem{Mirrahimi09}
M.~Mirrahimi.
\newblock Lyapunov control of a quantum particle in a decaying potential.
\newblock {\em Ann. Inst. H. Poincar\'e Anal. Non Lin\'eaire},
  26(5):1743--1765, 2009.

\bibitem{Morancey_polarisabilite}
M.~Morancey.
\newblock Explicit approximate controllability of the {S}chr\"odinger equation
  with a polarizability term.
\newblock {\em Math. Control Signals Systems}, pages 1--26, 2012.
\newblock Online First, DOI:10.1007/s00498-012-0102-2.

\bibitem{Morancey_simultane}
M.~Morancey.
\newblock Simultaneous local exact controllability of {1D} bilinear
  {S}chr\"odinger equations.
\newblock {\em Ann. Inst. H. Poincaré Anal. Non Linéaire}, 2013.
\newblock DOI : 10.1016/j.anihpc.2013.05.001.

\bibitem{Nersesyan}
V.~Nersesyan.
\newblock Growth of {S}obolev norms and controllability of the {S}chr\"odinger
  equation.
\newblock {\em Comm. Math. Phys.}, 290(1):371--387, 2009.

\bibitem{Nersesyan10}
V.~Nersesyan.
\newblock Global approximate controllability for {S}chr\"odinger equation in
  higher {S}obolev norms and applications.
\newblock {\em Ann. Inst. H. Poincar\'e Anal. Non Lin\'eaire}, 27(3):901--915,
  2010.

\bibitem{NersesyanNersisyan1D}
V.~Nersesyan and H.~Nersisyan.
\newblock Global exact controllability in infinite time of {S}chr\"odinger
  equation.
\newblock {\em J. Math. Pures Appl. (9)}, 97(4):295--317, 2012.

\bibitem{NersesyanNersisyan12}
V.~Nersesyan and H.~Nersisyan.
\newblock {Global exact controllability in infinite time of Schr{\"o}dinger
  equation: multidimensional case}.
\newblock Preprint, hal-00660478, 2012.

\bibitem{PoschelTrubowitz}
J.~P{\"o}schel and E.~Trubowitz.
\newblock {\em Inverse spectral theory}, volume 130 of {\em Pure and Applied
  Mathematics}.
\newblock Academic Press Inc., Boston, MA, 1987.

\bibitem{RouchonModele}
P.~Rouchon.
\newblock Control of a quantum particle in a moving potential well.
\newblock In {\em Lagrangian and {H}amiltonian methods for nonlinear control
  2003}, pages 287--290. IFAC, Laxenburg, 2003.

\bibitem{Schmidt}
W.M. Schmidt.
\newblock {\em Diophantine approximation}, volume 785 of {\em Lecture Notes in
  Mathematics}.
\newblock Springer, Berlin, 1980.

\bibitem{Turinici00}
G.~Turinici.
\newblock On the controllability of bilinear quantum systems.
\newblock In {\em Mathematical models and methods for ab initio quantum
  chemistry}, volume~74 of {\em Lecture Notes in Chem.}, pages 75--92.
  Springer, Berlin, 2000.

\bibitem{TuriniciRabitz04}
G.~Turinici and H.~Rabitz.
\newblock Optimally controlling the internal dynamics of a randomly oriented
  ensemble of molecules.
\newblock {\em Phys. Rev. A}, 70:063412, Dec 2004.

\end{thebibliography}
\end{document}